\newtheorem{theorem}{Theorem}[section]
\newtheorem{lemma}[theorem]{Lemma}
\newtheorem{lem}[theorem]{Lemma}
\newtheorem{corollary}[theorem]{Corollary}
\newtheorem{remark}[theorem]{Remark}
\newtheorem{ques}[theorem]{Question}
\newcommand{\cM}{{\mathcal M}}
\newcommand{\cN}{{\mathcal N}}
\newcommand{\cA}{{\mathcal A}}
\newcommand{\cB}{{\mathcal B}}
\newcommand{\cR}{{\mathcal R}}
\begin{document}

%\date{\today}

\title[Ring isomorphisms of Murray--von Neumann algebras]{Ring isomorphisms
of Murray--von Neumann algebras}

\author[Sh. A. Ayupov]{Shavkat Ayupov}
\address{V.I.Romanovskiy Institute of Mathematics\\
  Uzbekistan Academy of Sciences\\ 81,  Mirzo Ulughbek street, 100170  \\
  Tashkent,   Uzbekistan}
\address{National University of Uzbekistan \\
4, University street, 100174, Tashkent, Uzbekistan}
\email{\textcolor[rgb]{0.00,0.00,0.84}{shavkat.ayupov@mathinst.uz}}

\author[K. Kudaybergenov]{Karimbergen Kudaybergenov}
\address{V.I.Romanovskiy Institute of Mathematics\\
  Uzbekistan Academy of Sciences \\ 81, Mirzo Ulughbek street, 100170  \\
  Tashkent,   Uzbekistan}
  \address{Department of Mathematics\\
 Karakalpak State University\\
 1, Ch. Abdirov, 230112,  Nukus, Uzbekistan}
\email{\textcolor[rgb]{0.00,0.00,0.84}{karim2006@mail.ru}}

\newcommand{\M}{\mathcal{M}}
\newcommand{\sm}{S(\mathcal{M})}

\begin{abstract}
We give a complete description of ring isomorphisms between algebras  of measurable operators affiliated with  von Neumann algebras of type II$_1.$

\end{abstract}

\subjclass[2010]{Primary 46L10, Secondary, 46L51, 16E50, 47B49}
\keywords{von Neumann algebra, algebra of measurable operators, ring isomorphisms, real algebra isomorphism, real $\ast$-isomorphism}

\maketitle

\bigskip

\section{Introduction}

Let $\cM$ be a von Neumann algebra and let $S(\cM)$ (respectively, $LS(\cM)$) be a $\ast$-algebra of all measurable (respectively, locally measurable) operators with respect to $\cM.$

In the paper \cite{MMori2020} M. Mori  characterized lattice isomorphisms between projection lattices  $P(\cM)$
and $P(\cN)$  of arbitrary von Neumann algebras $\cM$   and $\cN$, respectively,  by means of ring isomorphisms between the algebras $LS(\cM)$ and $LS(\cN)$.  In this connection he investigated the following problem.

\begin{ques}\label{ques}
Let $\cM, \cN$ be von Neumann algebras. What is the general form of ring
isomorphisms from $LS(\cM)$ onto $LS(\cN)?$
\end{ques}

In \cite[Theorem B]{MMori2020} Mori himself gave  an answer to the above Question in the case of von Neumann algebras of type I$_\infty$ and III. Namely,
any ring isomorphism $\Phi$ from  $LS(\cM)$ onto $LS(\cN)$ has the form
$$
\Phi(x)=y\Psi(x)y^{-1},\, x\in LS(\cM),
$$
where $\Psi$ is a real $\ast$-isomorphism from $LS(\cM)$ onto $LS(\cN)$ and
$y\in LS(\cN)$ is an invertible element.
Note that in the case  where $\Phi$ is an  algebraic isomorphism  of type I$_\infty$  von Neumann algebras, the above
presentation was obtained in \cite{AAKD11}.

If  $\cM$ is a finite von
Neumann algebra, then $LS(\cM) = S(\cM)$ (see \cite{MC}).
If the von Neumann algebra $\cM$ is abelian (i.e. of type I$_1$) then it is $\ast$-isomorphic to the algebra
   $L_{\infty}(\Omega, \Sigma,
\mu)$ of all (classes of equivalence of)  essentially bounded measurable complex functions on a measure space
$(\Omega, \Sigma, \mu)$ and therefore,  $S(\cM)\cong
S(\Omega, \Sigma,
\mu)$ is the algebra of all   measurable complex functions on
$(\Omega, \Sigma, \mu).$
A.G. Kusraev
\cite{Kus} by means of Boolean-valued analysis establishes
necessary and sufficient conditions for existence of discontinuous non
trivial  algebra automorphisms on extended complete complex
$f$-algebras. In particular, he has proved that the algebra $S[0, 1]$ (which is isomorphic to  $LS(L_\infty[0,1])=S(L_\infty[0,1])$) admits
discontinuous  algebra automorphisms which identically act on the Boolean algebra  $P(L_\infty[0,1])$ of characteristic functions of measurable
subsets of the interval $[0,1].$

The following consideration shows that also  for the type I$_n$   case, $1<n<\infty,$ ring isomorphisms may be discontinuous in general (see for details  \cite{AAKD11})
 and therefore the representation from \cite[Theorem B]{MMori2020} is not valid for this case.

Let $\cM$ be a von Neumann algebra of type I$_n,$  $1<n<\infty$  with the center $Z(\cM).$
Then $\cM$ is $\ast$-isomorphic to the algebra $M_n(Z(\cM))$ of all $n\times n$ matrices over $Z(\cM)$
(cf. \cite[Theorem 2.3.3]{Sakai_book}). Moreover the algebra $S(\cM)$ is $\ast$-isomorphic
to the algebra $M_n(Z(S(\cM))),$ where $Z(S(\cM)) = S(Z(\cM))$ is the center of $S(\cM)$
(see \cite[Proposition 1.5]{Alb2}).
For an arbitrary  von Neumann algebra  $\cM$ of type I$_n$ each  algebra automorphism $\Phi$ of $S(\cM)$ can be  represented in the form
$$
\Phi(x) = a\overline{\Psi}(x)a^{-1},\, x\in S(\cM),
$$
where $a\in S(\cM)$  is an invertible element and
$\overline{\Psi}$ is an extension of a $\ast$-automorphism $\Psi$ of the
center $S(Z(\cM)).$

In \cite{MMori2020} the author  conjectured that the  representation of ring isomorphisms, mentioned above for type  I$_\infty$ and III cases
 holds also for type II von Neumann algebras. At the end of the paper M.~Mori wrote that
''%Not much is known about the structure of the algebra $LS(\cM)$ for a type II
%(in particular, II$_1$) von Neumann algebra $\cM.$
The author does not know whether
or not such a $\Phi$  is automatically real-linear even in the case $\cM$ and $\cN$ are (say,
approximately finite dimensional) II$_1$ factors. Note that $LS(\cM)$ cannot have a
Banach algebra structure because of the fact that an element of $LS(\cM)$ can have an
empty or dense spectral set. Hence it seems to be difficult to make use of {\it automatic
continuity} results on algebra isomorphisms as in \cite{Dales}''.

In the present paper we give an answer to the Question~\ref{ques}  for  type II$_1$ von Neumann algebras.
The paper is  organize as follows.

In Section 2 we give definitions of various kinds of isomorphisms between $\ast$-algebras and also some preliminaries from the theory
 of measurable operators affiliated with von Neumann algebras.

In order to prove the main result of the present paper, in  Sections 3 and 4 we  show  automatic real-linearity and automatic continuity of ring isomorphisms between algebras of measurable operators affiliated with von Neumann algebras of  type II$_1.$  Namely, we prove the following two
theorems.

\begin{theorem}\label{ringisomorphism}
Let $\mathcal{M}$ and $\cN$ be  type II$_1$ von Neumann algebras.
Then any ring isomorphism from  $S(\mathcal{M})$ onto $S(\mathcal{N})$ is a real algebra isomorphism.
\end{theorem}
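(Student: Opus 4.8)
\emph{Overview and Step 1 (algebraic reduction to the centre).} The plan is to first extract everything that is forced by the ring structure alone, thereby reducing the statement to a question about the centre, and then to use the wealth of (sub)equivalent projections available in type II$_1$ algebras, together with the measure topology, to settle that question. Any ring isomorphism $\Phi$ is additive, hence $\mathbb{Q}$-linear, and unital; it sends idempotents to idempotents; and, since $e\sim f$ (resp.\ $e\precsim f$) for idempotents is expressed purely by equations $ab=e,\ ba=f$ (resp.\ by subordination $e\sim f_{0}$ to an idempotent $f_{0}$ with $f_{0}f=f_{0}=ff_{0}$), it preserves algebraic (Murray--von Neumann) equivalence and subequivalence of idempotents. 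It also maps $Z(S(\cM))=S(Z(\cM))$ onto $Z(S(\cN))=S(Z(\cN))$. Now $\Phi(\lambda x)=\Phi(\lambda 1)\Phi(x)$ with $\Phi(\lambda 1)$ central, and $\Phi$ is onto, so real-linearity of $\Phi$ is \emph{equivalent} to $\Phi(\lambda 1)=\lambda 1$ for every $\lambda\in\mathbb{R}$; and once that is known, writing $\mu=a+bi$ gives $\Phi(\mu 1)=a1+b\,\Phi(i1)$ with $\Phi(i1)^{2}=-1$, so $\Phi(i1)$ is a central square root of $-1$ and $\Phi$ acquires the expected real $\ast$-type form. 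Thus the whole theorem reduces to controlling $\Phi$ on the real scalars, i.e.\ on the centre — precisely the locus of the Kusraev-type discontinuous automorphisms of abelian $S(\Omega,\Sigma,\mu)$, so the type II$_1$ hypothesis must be exploited to rule those out.

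\emph{Step 2 (the dimension function).} Since $\cM,\cN$ are finite, algebraic (sub)equivalence of idempotents in $S(\cM)$ coincides with Murray--von Neumann (sub)equivalence of the associated range projections, and the latter is governed by the centre-valued trace $T_{\cM}\colon S(\cM)\to Z(S(\cM))$, which on projections is characterised by $T_{\cM}(1)=1$, additivity on orthogonal projections, centrality, and the comparison theorem. Consequently $\Phi$ induces a map $\theta$ from $\{z\in Z(S(\cM)):0\le z\le 1\}=T_{\cM}(P(\cM))$ (here type II$_1$ is used, so that this set is all of the order interval) onto $\{w\in Z(S(\cN)):0\le w\le 1\}$, and $\theta$ is additive, order-preserving, unital and bijective. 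A monotone solution of the restricted Cauchy equation is automatically $\mathbb{R}_{\ge 0}$-homogeneous, so $\theta$ extends to a unital real-linear order isomorphism between the self-adjoint parts of the centres, and then of $S(Z(\cM))$ and $S(Z(\cN))$; in particular $\theta(\lambda 1)=\lambda 1$ for real $\lambda$. (For factors this is just von Neumann's fact that an isomorphism of continuous geometries preserves the normalised dimension function.)

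\emph{Step 3 (identifying $\Phi$ with $\theta$ on the centre — the crux — and conclusion).} By construction $\Phi=\theta$ on central idempotents, hence on $\mathbb{Q}$-rational simple central elements, and what remains is to upgrade this to all positive central elements, equivalently to prove that $\Phi$ \emph{preserves positivity on the centre}; this forces $\Phi|_{Z(S(\cM))}$ to be order-continuous, hence equal to $\theta$, hence $\Phi(\lambda 1)=\lambda 1$. The mechanism I would use is: write a central effect $z$ as an increasing limit of rational simple central elements $z_{n}$ with $0\le z-z_{n}\le 2^{-n}1$, and show $0\le\Phi(c)\le 2^{-n}1$ whenever $0\le c\le 2^{-n}1$, by realising such $c$ as $c=T_{\cM}(p)$ for a projection $p$ and transporting this along $\Phi$ through matrix amplifications $M_{k}(S(\cM))$ (to which $\Phi$ extends as $\Phi\otimes\id$), using that an element supported on a projection of small centre-valued trace is small for the measure topology and that $\Phi$ already preserves the trace of projections by Step 2. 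I expect this to be the main obstacle: the abelian case shows that such positivity-preservation genuinely cannot hold by formal ring-theory alone, and the delicate point is precisely to convert the II$_1$ abundance of (sub)equivalent projections and matrix units into the required rigidity, excluding the Kusraev pathologies. Once $\Phi(\lambda 1)=\lambda 1$ is established for all real $\lambda$, real-linearity of $\Phi$ on all of $S(\cM)$ is immediate from $\Phi(\lambda x)=\Phi(\lambda 1)\Phi(x)=\lambda\Phi(x)$, so $\Phi$ is a real algebra isomorphism.
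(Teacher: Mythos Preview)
Your Step~1 reduction is correct and is exactly where the paper starts too: real-linearity is equivalent to continuity of $\lambda\mapsto\Phi(\lambda\mathbf{1})$ at $0$. Step~2 is also sound: in the finite Baer $*$-ring $S(\cN)$ every idempotent is algebraically equivalent to its range projection and algebraic equivalence of projections coincides with $*$-equivalence, so $\Phi$ does induce a well-defined additive, monotone, unital bijection $\theta$ on the range of the centre-valued trace, and the monotone Cauchy argument gives $\theta(\lambda\mathbf{1})=\lambda\mathbf{1}$.

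The genuine gap is Step~3, and the mechanism you sketch does not close it. The map $\theta$ records how $\Phi$ moves \emph{projections}; the quantity $\Phi(\lambda\mathbf{1})$ records how $\Phi$ moves the \emph{scalar} $\lambda\mathbf{1}$; and there is no ring-theoretic identity linking the two, because the equation $T_{\cM}(p)=\lambda\mathbf{1}$ is defined through the trace and the $*$-structure, neither of which $\Phi$ respects. Concretely, in the factor case $Z=\mathbb{C}$, and the only constraints on $\Phi|_{\mathbb{C}}$ coming from $\mathbb{C}$ itself allow any of the $2^{2^{\aleph_0}}$ wild field automorphisms; so the obstruction must be broken by a relation between $\lambda\mathbf{1}$ and non-central elements of $S(\cM)$. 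Your ``matrix amplification'' idea does not supply such a relation: applying $\Phi\otimes\id$ to $M_k(S(\cM))$ still produces no equation of the form $\lambda\mathbf{1}=F(p)$ with $F$ a ring-word, and knowing that $\Phi(p)$ has range projection of small trace says nothing about $\Phi(T_{\cM}(p))$. The sentence ``an element supported on a projection of small centre-valued trace is small for the measure topology'' is true but applies to $\Phi(p)$, not to $\Phi(\lambda\mathbf{1})$, which has full support.

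The paper circumvents this entirely by an analytic gliding-hump argument in the $F$-space $(S(\cM),t_\tau)$. One fixes a system of matrix units $\{e^{(n)}_{ij}\}$ from a hyperfinite II$_1$ subfactor of $\cN$, forms the cyclic unitaries $u_n=\sum_{i}e^{(n)}_{i,i+1}+e^{(n)}_{2^n,1}$ and their preimages $v_n=\Phi^{-1}(u_n)$, and builds $x_k=\lambda_k\sum_{i=0}^{2^k-1}(-1)^i v_k^{i}a_k v_k^{-i}$ with $a_k=\Phi^{-1}(e^{(k)}_{1,1})$. The algebraic point is that $v_n x_k v_n^{-1}=x_k$ for $k>n$ but $v_n x_n v_n^{-1}=-x_n$; the analytic point is that, using the assumed discontinuity of $\lambda\mapsto\Phi(\lambda\mathbf{1})$, one can choose $\lambda_k$ so that $\|t_k x_k\|_{S(\cM)}\le 2^{-k}$ while $\|\Phi(x_k)\|_{S(\cN)}\ge\varepsilon$. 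Summing $x=\sum_k t_k x_k$ and computing $\tfrac{1}{2t_n}\bigl(u_n\Phi(x)u_n^{-1}-\Phi(x)\bigr)$ isolates $-\Phi(x_n)$ up to a term of $F$-norm $\le\varepsilon/2$, giving a quantity bounded below by $\varepsilon/2$; but the same expression is also bounded by $2\|\tfrac{1}{2t_n}\Phi(x)\|_{S(\cN)}\to 0$. This contradiction is where the type II$_1$ hypothesis actually does work: it provides the infinite refining system of matrix units, and hence the unitaries $u_n$, which have no analogue in the abelian case.
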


\begin{theorem}\label{realisomorphism}
Let $\mathcal{M}$ and $\cN$ be type II$_1$ von Neumann algebras.
Then any ring isomorphism from  $S(\mathcal{M})$ onto $S(\mathcal{N})$ is continuous in the  local measure topology.
\end{theorem}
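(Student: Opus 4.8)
By Theorem~\ref{ringisomorphism} we may assume $\Phi$ is a real algebra isomorphism, so that $\Phi$ and $\Phi^{-1}$ are additive, multiplicative and $\mathbb{R}$-homogeneous bijections. The plan is to run a separating-space/closed-graph argument, taking care that the local measure topology is complete but not locally convex. \emph{First reduce to the $\sigma$-finite case.} Write $1_{\mathcal M}$ as a sum of $\sigma$-finite central projections $z_\alpha$; since $\sigma$-finiteness of a finite von Neumann algebra is an invariant of its centre, and $\Phi$ restricts to a ring isomorphism $Z(S(\mathcal M))=S(Z(\mathcal M))\to S(Z(\mathcal N))=Z(S(\mathcal N))$, hence to a Boolean isomorphism $P(Z(\mathcal M))\to P(Z(\mathcal N))$, the elements $\Phi(z_\alpha)$ form a decomposition of $1_{\mathcal N}$ into $\sigma$-finite central projections and $\Phi$ maps each $z_\alpha S(\mathcal M)$ isomorphically onto $\Phi(z_\alpha)S(\mathcal N)$. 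Since the local measure topology is assembled from the measure topologies on these summands, it suffices to treat each restriction, so we may assume $\mathcal M,\mathcal N$ are $\sigma$-finite. Then $S(\mathcal M)$ and $S(\mathcal N)$ are complete metrizable topological $\ast$-algebras with jointly continuous multiplication, and by the closed graph theorem for $F$-spaces (complete metrizable topological vector spaces; local convexity is not needed) it is enough to show that the separating space $\mathfrak{S}(\Phi)=\{\,y\in S(\mathcal N):\ \exists\,x_n\to 0 \text{ in } S(\mathcal M) \text{ with } \Phi(x_n)\to y\,\}$ equals $\{0\}$.

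\emph{The separating space is a closed two-sided ideal.} One has $\mathfrak{S}(\Phi)=\bigcap_k\overline{\Phi(V_k)}$ for a countable neighbourhood base $(V_k)$ of $0$, so it is closed and $\mathbb{R}$-linear; and since $\Phi$ is a multiplicative bijection while multiplication is continuous, $b\,\mathfrak{S}(\Phi)\subseteq\mathfrak{S}(\Phi)\supseteq\mathfrak{S}(\Phi)\,b$ for every $b\in S(\mathcal N)$. \emph{Classify these ideals.} If $I\neq\{0\}$ is a closed two-sided ideal of $S(\mathcal N)$, then for $0\neq y\in I$ one gets $|y|=u^{\ast}y\in I$, hence a nonzero spectral projection $p=e^{|y|}([\lambda,\infty))\in I$ (because $p=(p|y|p)^{-1}p|y|p$), hence the central carrier $z(p)=\bigvee_i w_ipw_i^{\ast}\in I$ ($I$ being closed and containing all finite suprema of its projections), and it follows that $I=zS(\mathcal N)$ with $z=\bigvee\{z(p):p=p^{2}=p^{\ast}\in I\}$; for factors this merely says $S(\mathcal N)$ is a simple ring. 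Hence $\mathfrak{S}(\Phi)=zS(\mathcal N)$ for a central projection $z$, and cutting $\Phi$ down by $z$ and $w:=\Phi^{-1}(z)$ yields a ring isomorphism $S(w\mathcal M w)\to S(z\mathcal N z)$ between type~II$_1$ algebras whose separating space is the \emph{whole} target. Everything thus reduces to: for nonzero type~II$_1$ algebras $\mathcal A,\mathcal B$, no ring isomorphism $\Psi:S(\mathcal A)\to S(\mathcal B)$ has $\mathfrak{S}(\Psi)=S(\mathcal B)$ — equivalently, since $\mathfrak{S}(\Psi)$ is an ideal, there is no sequence $x_n\to 0$ in $S(\mathcal A)$ with $\Psi(x_n)\to 1$.

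This last point is the heart of the matter, and the only place the type~II$_1$ hypothesis is genuinely used: for type~I$_n$, $1<n<\infty$, discontinuous ring isomorphisms do exist, precisely because there $S(\mathcal M)$ is finitely generated over its centre, which fails in type~II$_1$; moreover $S(\mathcal B)$ is not a Banach algebra, so the automatic-continuity machinery recalled in the Introduction is unavailable. Assume, for contradiction, $x_n\to 0$ in $S(\mathcal A)$ with $\Psi(x_n)\to 1$. Passing to a subsequence and using the spectral resolution of $|x_n|$, write $x_n=y_n+z_n$ with $\|y_n\|\le 4^{-n}$ and $z_n=z_nr_n$ right-supported by a projection $r_n$, $\tau(r_n)\le 4^{-n}$. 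Using the homogeneity of $\mathcal A$ — mutually orthogonal projections $\tilde r_n\sim r_n$ with $\sum_n\tilde r_n\le 1$, implemented by partial isometries $v_n$, $v_n^{\ast}v_n=r_n$, $v_nv_n^{\ast}=\tilde r_n$ — form $\widehat z:=\sum_n x_nv_n^{\ast}\in S(\mathcal A)$; the series converges in the measure topology because its tails are right-supported by the projections $\bigvee_{k>N}\tilde r_k$, whose traces tend to $0$ (so the possibly large norms of the summands do not matter). Since $\tilde r_n\widehat z=x_nv_n^{\ast}$, one gets $\Psi(\tilde r_n)\Psi(\widehat z)=\Psi(x_n)\Psi(v_n^{\ast})$, a sequence of pieces of the single operator $\Psi(\widehat z)\in S(\mathcal B)$ carved out by the mutually orthogonal idempotents $\Psi(\tilde r_n)$; exploiting $\Psi(x_n)\to 1$ one aims to show these pieces stay bounded away from $0$, contradicting the fact that a $\tau$-measurable operator affiliated with a \emph{finite} algebra cannot dominate infinitely many mutually orthogonal non-negligible parts. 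Making this precise — especially handling the norm-small parts $y_n$ (through a second, norm-convergent disjointification, or after first establishing directly that $\Psi$ maps the unit ball of $\mathcal A$ onto a bounded subset of $S(\mathcal B)$), and guaranteeing that the transported pieces really remain ``large'' — is, I expect, the principal technical obstacle; finiteness of $\tau$ on $\mathcal B$ is essential throughout.

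Granting this, $\mathfrak{S}(\Phi)=\{0\}$, so $\Phi$ has closed graph and is continuous in the $\sigma$-finite case; undoing the first reduction gives continuity in the local measure topology in general. (When $\mathcal M,\mathcal N$ are factors the ideal classification is immediate and only the third step is required.)
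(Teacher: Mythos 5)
Your opening moves are sound and essentially reproduce, in a more systematic form, what the paper does in Lemma~\ref{dyadic}: the reduction to finite trace, the closed graph theorem for $F$-spaces, the observation that the separating space $\mathfrak{S}(\Phi)$ is a closed two-sided ideal, and the use of partial inverses, spectral projections and equivalence of projections to show it contains all of $zS(\cN)$ for some nonzero central projection $z$ (the paper stops at the slightly weaker statement that every projection in $P(z\cN)$ lies in $\mathfrak{S}(\Phi)$, which is all it needs). Up to that point the two arguments are interchangeable.

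The genuine gap is exactly where you flag it: deriving a contradiction from a sequence $x_n\to 0$ with $\Psi(x_n)\to \mathbf{1}$ (or $\to e$ for prescribed projections $e$). Your disjointification sketch does not close. The pieces you produce are $\Psi(\tilde r_n)\Psi(\widehat z)=\Psi(x_n)\Psi(v_n^{\ast})$, and there is no control whatsoever on $\Psi(v_n^{\ast})$: since $\Psi$ is the very map whose continuity is in question, $\Psi(v_n^{\ast})$ need not be small, large, or bounded, so ``$\Psi(x_n)\Psi(v_n^{\ast})\approx\Psi(v_n^{\ast})$'' gives no lower bound. Worse, the elements $\Psi(\tilde r_n)$ are merely mutually orthogonal \emph{idempotents} in $S(\mathcal B)$, not projections; their left supports need not be mutually orthogonal, and (because the ranges of orthogonal idempotents are only algebraically independent, while the dimension function sees closures) one cannot conclude that $\tau(l(\Psi(\tilde r_n)))\to 0$ or is summable. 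So the intended punchline --- ``a measurable operator affiliated with a finite algebra cannot dominate infinitely many orthogonal non-negligible parts'' --- is not available in the form you would need it.

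The paper's mechanism at this point is quite different and is the real content of Section~4. It fixes a copy of the hyperfinite II$_1$ factor via the dyadic matrix units of Lemma~\ref{lem_matrix_units}, takes the cyclic-shift unitaries $u_n$ of \eqref{unitary} and their preimages $v_n=\Phi^{-1}(u_n)$, and uses the alternating conjugation sums $x_k=\sum_{i=0}^{2^k-1}(-1)^i v_k^i a_k v_k^{-i}$, which by Lemma~\ref{lemma3.4} satisfy $v_kx_kv_k^{-1}=-x_k$ and $v_mx_kv_m^{-1}=x_k$ for $m<k$. Feeding in elements $a_k$ with $a_k\to 0$ and $\Phi(a_k)\to e^{(k)}_{1,1}$ (supplied by the separating-space lemma) yields $\Phi(x_k)\to\sum_i(-1)^{i-1}e^{(k)}_{ii}$, whose $F$-norm is bounded below; a gliding-hump sum $x=\sum t_kx_k$ then makes $\frac{1}{2t_n}\bigl(u_n\Phi(x)u_n^{-1}-\Phi(x)\bigr)$ bounded below by $1/4$ in $\|\cdot\|_{S(\cN)}$, while the unitary invariance (symmetry) of that $F$-norm forces the same quantity to tend to $0$ as $t_n\to\infty$. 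None of these ingredients --- the dyadic unitaries, the anticommutation identity, the symmetric $F$-norm, the gliding hump --- appears in your plan, and without them (or a genuinely new substitute) the contradiction is not reached. As written, the proposal is an honest reduction plus an unproved core claim.
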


In Section 5 the following main result  confirms the  Conjecture 5.1 in \cite{MMori2020} and answers  the above Question 1.1 for the type II$_1$ case.
\begin{theorem}\label{latticeisomorphism}
Let $\cM$ and $\cN$ be von Neumann algebras of type II$_1.$ Suppose that
$\Phi: S(\cM) \to  S(\cN)$ is a ring isomorphism.
Then there exist an invertible element
$a \in S(\cN)$ and a real
$\ast$-isomorphism $\Psi: \cM \to  \cN$ (which extends to a real
$\ast$-isomorphism from $S(\cM)$ onto $S(\cN)$) such that
$\Phi(x) = a\Psi(x)a^{-1}$ for
all $x \in  S(\cM).$
\end{theorem}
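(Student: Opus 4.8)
The plan is to combine the two preceding theorems with the already-known description of real $\ast$-isomorphisms and with Mori's lattice-theoretic machinery. By Theorem~\ref{ringisomorphism} the ring isomorphism $\Phi\colon S(\cM)\to S(\cN)$ is real-linear, hence a real-algebra isomorphism, and by Theorem~\ref{realisomorphism} it is continuous for the local measure topology. So from the outset I may treat $\Phi$ as a continuous real-algebra isomorphism, which is a much more rigid object than a bare ring isomorphism.

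First I would extract the geometry. A ring isomorphism carries idempotents to idempotents, so $\Phi$ maps the set of all idempotents of $S(\cM)$ bijectively onto that of $S(\cN)$; restricting and conjugating appropriately it induces an order isomorphism between the projection lattices $P(\cM)=P(S(\cM))$ and $P(\cN)=P(S(\cN))$ (after replacing each idempotent by the projection with the same range, an operation that is algebraic in $S(\cN)$). This is exactly the situation analysed in \cite{MMori2020}: a lattice isomorphism $P(\cM)\to P(\cN)$ of type~II$_1$ von Neumann algebras. One then invokes the structure of such lattice isomorphisms — a Dye-type / Mackey-type theorem, valid here since $\cM$ has no type~$I_2$ summand and no abelian summand — to produce a real $\ast$-isomorphism $\Psi_0\colon \cM\to\cN$ and an invertible $a\in S(\cN)$ such that $\Phi$ and the map $x\mapsto a\Psi_0(x)a^{-1}$ agree on enough projections; since the type~II$_1$ algebra is generated as a ring by its projections (indeed by sums of orthogonal equivalent projections), agreement on projections forces $\Phi(x)=a\Psi(x)a^{-1}$ on all of $S(\cM)$, where $\Psi$ is the canonical extension of $\Psi_0$ to $S(\cM)$ built via the measure topology and the fact that $\Psi_0$ is measure-preserving up to the induced trace.

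More precisely, the key steps in order would be: (1) record that $\Phi$ and $\Phi^{-1}$ are continuous real-algebra isomorphisms and in particular preserve the bounded parts $\cM,\cN$ up to the invertible conjugation that $\Phi$ may introduce; (2) show that after composing $\Phi$ with an inner automorphism $x\mapsto a^{-1}xa$ of $S(\cN)$ for a suitable invertible $a\in S(\cN)$, the resulting map sends $P(\cM)$ onto $P(\cN)$ — this uses the polar-type decomposition of idempotents and the fact that a real-algebra isomorphism sends self-adjoint idempotents to idempotents whose range and kernel projections can be simultaneously straightened; (3) apply the projection-lattice classification to obtain a real $\ast$-isomorphism $\Psi\colon\cM\to\cN$ inducing the same map on projections; (4) verify that $\Psi$ extends to a real $\ast$-isomorphism of $S(\cM)$ onto $S(\cN)$ (continuity of $\Psi$ on $\cM$ with respect to the trace norms, plus density of $\cM$ in $S(\cM)$ in the measure topology, gives the extension); (5) show $\Phi(x)=a\Psi(x)a^{-1}$ for all $x$, first on the $\ast$-algebra generated by projections, then on all of $S(\cM)$ by the already-established continuity of both sides.

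\textbf{The main obstacle} I anticipate is step~(2): turning the purely algebraic fact that $\Phi$ preserves idempotents into the analytic statement that, after a \emph{single} global conjugation by an invertible element of $S(\cN)$, it preserves the \emph{self-adjoint} idempotents. A ring isomorphism a priori only respects the algebraic (range/kernel) structure of idempotents, not the Hilbert-space orthogonality that distinguishes projections; in the type~I$_n$ case this is exactly where genuinely non-spatial behaviour enters, so one must use type~II$_1$ specific input — the absence of an abelian or type~$I_2$ direct summand and the resulting homogeneity of the projection lattice — to guarantee that the ``twist'' between the two $\ast$-structures is implemented by one invertible element rather than varying over the center. Once the right invertible $a$ is isolated, matching $\Psi$ on projections and extending to $S(\cM)$ is essentially bookkeeping with the measure topology, for which Theorem~\ref{realisomorphism} has already done the hard analytic work.
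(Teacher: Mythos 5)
There is a genuine gap, and it sits exactly where you flag your ``main obstacle.'' Your step (3) invokes a ``Dye-type / Mackey-type'' classification of lattice isomorphisms $P(\cM)\to P(\cN)$ for type II$_1$ algebras, but no such classification exists independently of the theorem you are trying to prove: Dye's theorem classifies \emph{ortho}-isomorphisms (lattice isomorphisms preserving orthogonality), whereas the map $p\mapsto l(\Phi(p))$ induced by a ring isomorphism is only a lattice isomorphism and need not respect orthocomplementation --- that is precisely the ``twist'' you describe. Via von Neumann's correspondence between lattice isomorphisms and ring isomorphisms of $S(\cM)$ (used in Corollary~\ref{latticering}), classifying general lattice isomorphisms of type II$_1$ projection lattices is \emph{equivalent} to Theorem~\ref{latticeisomorphism}; this is exactly the conjecture of Mori that the paper resolves. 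So your argument is circular at its central step, and the difficulty you correctly identify in step (2) --- straightening all idempotents by a single global conjugation --- is never actually overcome. Your step (1) also asserts more than is available: a measure-topology-continuous real-algebra isomorphism of $S(\cM)$ is not known a priori to carry $\cM$ into $\cN$ even up to conjugation; establishing a local version of this is one of the hardest parts of the paper.

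The paper's route avoids the lattice entirely. After Theorems~\ref{ringisomorphism} and~\ref{realisomorphism} (which you use correctly), the key new input is Lemma~\ref{boun}: a Zorn's-lemma maximality argument, built on the orthogonality Lemma~\ref{orto}, producing projections $q_n$ with $\tau_\cM(\mathbf{1}-q_n)\to 0$ such that $\Phi$ maps $q_n\cM q_n$ into the \emph{bounded} algebra $\cN$. Combined with Lemma~\ref{boune}, this reduces the problem to real algebra isomorphisms between corners $p_n\cM p_n$ and $\widetilde{p_n}\cN\widetilde{p_n}$ of the von Neumann algebras themselves, where the known structure theorem for bounded real algebra isomorphisms (\cite[Lemma 2.1(3)]{MMori2020}) applies; Lemmas~\ref{aaa}, \ref{bbb}, \ref{eqi} and~\ref{ccc} then assemble the conjugating invertible element on each corner, and a central decomposition using the dimension function patches these local solutions into a single global $a$ and $\Psi$. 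If you want to keep a lattice-flavoured picture, the honest statement is that the lattice classification (Corollary~\ref{latticering}) is a \emph{consequence} of this theorem, not an ingredient of its proof.
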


\begin{corollary}\label{latticering}
Let $\cM$ and $\cN$  be von Neumann algebras of type II$_1.$ The projection lattices $P(\cM)$
and $P(\cN)$ are lattice isomorphic, if and only if the von Neumann algebras $\cM$ and $\cN$ are real
$\ast$-isomorphic  (or equivalently, $\cM$ and $\cN$ are
Jordan
$\ast$-isomorphic).
\end{corollary}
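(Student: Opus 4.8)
The plan is to deduce this from Theorem 1.6 together with Mori's result (\cite[Theorem A]{MMori2020} or the cited lattice–ring correspondence) that a lattice isomorphism $P(\cM)\to P(\cN)$ between projection lattices of von Neumann algebras with no type I$_2$ summand is implemented by a ring isomorphism $\Phi\colon LS(\cM)\to LS(\cN)$ carrying $P(\cM)$ onto $P(\cN)$; since our algebras are of type II$_1$ they are finite, so $LS(\cM)=S(\cM)$ and $LS(\cN)=S(\cN)$, and the type I$_2$ obstruction does not arise. So one implication runs: given a lattice isomorphism of $P(\cM)$ onto $P(\cN)$, promote it to a ring isomorphism $\Phi\colon S(\cM)\to S(\cN)$; by Theorem 1.6 write $\Phi(x)=a\Psi(x)a^{-1}$ with $a\in S(\cN)$ invertible and $\Psi\colon\cM\to\cN$ a real $\ast$-isomorphism; this $\Psi$ is the desired real $\ast$-isomorphism of the von Neumann algebras.

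For the converse, suppose $\Psi\colon\cM\to\cN$ is a real $\ast$-isomorphism. It extends to a real $\ast$-isomorphism $\widetilde\Psi\colon S(\cM)\to S(\cN)$ (the extension is standard: a real $\ast$-isomorphism preserves the order structure and the measure-theoretic data needed to pass to measurable operators; this is already invoked in the statement of Theorem 1.6). In particular $\widetilde\Psi$ is a ring isomorphism, hence restricts to a lattice isomorphism $P(\cM)\to P(\cN)$ because it preserves projections (self-adjoint idempotents are characterized ring-theoretically together with the involution, and a real $\ast$-isomorphism preserves both), as well as the order $e\le f\iff e=ef$ and the lattice operations, which are expressible via the $\ast$-ring structure. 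This gives the lattice isomorphism.

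Finally, the parenthetical equivalence with Jordan $\ast$-isomorphism: a real $\ast$-isomorphism is in particular a real-linear Jordan $\ast$-isomorphism, and conversely any Jordan $\ast$-isomorphism between von Neumann algebras decomposes, via a central projection, as the direct sum of a $\ast$-isomorphism and a $\ast$-anti-isomorphism (the classical result for Jordan homomorphisms of von Neumann algebras), and a $\ast$-anti-isomorphism composed with the transpose/adjoint-type conjugation yields a real $\ast$-isomorphism; so the two notions coincide on von Neumann algebras. Collecting these, $P(\cM)\cong P(\cN)$ as lattices $\iff$ $\cM\cong\cN$ via a real $\ast$-isomorphism $\iff$ $\cM\cong\cN$ via a Jordan $\ast$-isomorphism.

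The only genuinely nontrivial input is Theorem 1.6 itself, which supplies the real $\ast$-isomorphism from a bare lattice (equivalently, ring) isomorphism; the remaining steps are bookkeeping about how projections, order, and Jordan structure are encoded in the $\ast$-ring and how real $\ast$-isomorphisms of von Neumann algebras extend to the measurable-operator level, all of which is either standard or already cited in the excerpt. I expect the extension-to-$S(\cM)$ claim and the Jordan-decomposition equivalence to be the places where one must be slightly careful to cite the right references rather than re-prove anything.
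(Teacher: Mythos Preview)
Your proposal is correct and follows essentially the same approach as the paper: promote the lattice isomorphism to a ring isomorphism of $S(\cM)$ onto $S(\cN)$, then invoke Theorem~\ref{latticeisomorphism} to extract the real $\ast$-isomorphism, with the converse being straightforward. The only difference is the reference used for the lattice-to-ring step---the paper cites von Neumann's classical coordinatization theorem \cite[Part~II, Theorem~4.2]{Neu60} rather than Mori's result, and the paper leaves the converse and the Jordan $\ast$-isomorphism equivalence implicit (``the converse assertion is clear''), whereas you spell these out.
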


\section{Preliminaries}

\subsection{Various isomorphisms of $\ast$-algebras}
For
$\ast$-algebras $\cA$  and
$\cB,$ a (not necessarily linear) bijection $\Phi: \cA \to  \cB$ is called
\begin{itemize}
%\item  a semigroup isomorphism if it is multiplicative;
\item a ring isomorphism if it is additive and multiplicative;
\item a real algebra isomorphism if it is a real-linear ring isomorphism;
\item an algebra isomorphism if it is a complex-linear ring isomorphism;
\item a real
$\ast$-isomorphism if it is a real algebra isomorphism and satisfies $\Phi(x^\ast) =
\Phi(x)^\ast$ for all $x \in  \cA;$
\item a
$\ast$-isomorphism if it is a complex-linear real $\ast$-isomorphism. %, and
%\item a conjugate-linear
%$\ast$-isomorphism
%if it is a conjugate-linear real
%$\ast$-isomorphism.
\end{itemize}

\subsection{von Neumann algebras}

Let $H$  be a Hilbert space,  $B(H)$ be the $\ast$-algebra of all bounded linear operators
on $H$ and let $\M$ be a von Neumann algebra in $B(H)$.

Denote by $P(\mathcal{M})$ the set of all projections in $\mathcal{M}.$ Recall that two projections $e, f \in  P(\mathcal{M})$ are called \textit{equivalent}  (denoted as $e\sim f$) if there exists an element
$u \in \mathcal{M}$ such that $u^\ast  u = e$ and $u u^\ast  = f.$
For projections $e, f \in  \mathcal{M}$
notation $e \precsim  f$ means that there exists a projection $q \in  \mathcal{M}$ such that
$e\sim q \leq f.$ A projection $p \in \mathcal{M}$ is said to be \textit{finite}, if it is not equivalent to its proper sub-projection, i.e.
the conditions $q \leq  p$ and $q\sim p$ imply that $q = p.$

It is known \cite[Theorem 8.4.3]{KRII} that for a  finite von Neumann algebra $\mathcal{M}$ with the center $Z(\mathcal{M})$  and
the set  $P(\mathcal{M})$ of all projections  in $\mathcal{M},$    there exists a unique mapping $\Delta: P(\mathcal{M})\to Z(\mathcal{M})$
such that
\begin{itemize}
\item[(i)]  $0\neq \Delta(e)\ge 0$ if $e\in P(\mathcal{M})$ and $e\neq 0;$
\item[(ii)] $\Delta(e+f)=\Delta(e)+\Delta(f)$ if $e,f\in P(\mathcal{M})$ and $ef=0;$
\item[(iii)] $\Delta(e)=\Delta(f)$ if and only if $e\sim f;$
\item[(iv)] $\Delta(e)=e$ if $e\in P(\mathcal{M})\cap Z(\mathcal{M}).$
\end{itemize}
The mapping $\Delta$ is called the dimension function.

The following is a well-known result which is crucial in our further constructions, and
 it asserts that in an arbitrary type II$_1$ von Neumann algebra there exists a copy of the hyperfinite type II$_1$ factor $\cR \cong \otimes _{k=1}^
 \infty M_2 (\mathbb{C})$.

\begin{lemma}\label{lem_matrix_units}
Let $\cN$ be a type II$_1$ von Neumann algebra.
There is  a  system of matrix units $\left\{e^{(n)}_{ij}:\  n=0,1,\dots, \ i,j=1,\dots,2^n\right\}$ in  $\cN$ (here $e^{(0)}_{1,1}=\mathbf{1}$) such that
\begin{itemize}
\item[(a)] $e^{(n)}_{ij}e^{(n)}_{kl}=\delta_{jk}e^{(n)}_{il},$ where $\delta_{jk}$ is the Kronecker delta;
\item[(b)] $\left(e^{(n)}_{ij}\right)^\ast=e^{(n)}_{ji};$
\item[(c)] $e^{(n-1)}_{ij}=e^{(n)}_{2i-1,2j-1}+e^{(n)}_{2i,2j}$
\end{itemize}
for all $n=1,2,\ldots.$
\end{lemma}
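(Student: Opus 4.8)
The goal is to produce a copy of the hyperfinite II$_1$ factor inside an arbitrary type II$_1$ von Neumann algebra $\cN$, presented concretely as a coherent tower of dyadic matrix unit systems satisfying the compatibility condition (c). The plan is to build the matrix units inductively on $n$, at each stage halving each minimal projection of the current $2^n$-dimensional system into two equivalent pieces, which is possible precisely because $\cN$ has no minimal projections (it is of type II$_1$, hence continuous), and then transporting the partial isometries along so that the new system refines the old one in the prescribed way.

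\begin{proof}[Proof sketch]
I would argue by induction on $n$, constructing the whole system $\{e^{(n)}_{ij}\}$ together so that (a), (b), (c) hold. For $n=0$ set $e^{(0)}_{1,1}=\mathbf 1$. Assume that a system $\{e^{(n)}_{ij}:\ i,j=1,\dots,2^n\}$ of matrix units in $\cN$ has been constructed, with $\sum_{i=1}^{2^n} e^{(n)}_{ii}=\mathbf 1$ and all $e^{(n)}_{ii}$ mutually equivalent (this last property holds for $n=0$ trivially and is preserved by the construction). Focus on the corner projection $p:=e^{(n)}_{11}$ and the reduced von Neumann algebra $p\cN p$, which is again of type II$_1$, in particular has no nonzero minimal projections. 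Hence there is a projection $q\le p$ in $p\cN p$ with $q\sim p-q$ relative to $p\cN p$; one clean way to see this uses the dimension function $\Delta$ restricted to $p\cN p$ together with comparability of projections in a finite von Neumann algebra, choosing $q$ with $\Delta(q)=\tfrac12\Delta(p)$. Pick a partial isometry $v\in p\cN p$ with $v^\ast v=q$, $vv^\ast=p-q$. Now define $e^{(n+1)}_{2\cdot 1-1,\,2\cdot 1-1}:=q$, $e^{(n+1)}_{2\cdot 1,\,2\cdot 1}:=p-q$, and $e^{(n+1)}_{2,1}:=v$, $e^{(n+1)}_{1,2}:=v^\ast$ (indices in the new system). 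This splits the $(1,1)$ block as required by (c) for the pair $(i,j)=(1,1)$.

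Next I would propagate this splitting to all the other diagonal blocks and then to the off-diagonal ones. For each $i$ set $e^{(n+1)}_{2i-1,2i-1}:=e^{(n)}_{i1}\,q\,e^{(n)}_{1i}$ and $e^{(n+1)}_{2i,2i}:=e^{(n)}_{i1}\,(p-q)\,e^{(n)}_{1i}$; using the matrix unit relations (a),(b) for level $n$ one checks these are orthogonal projections summing to $e^{(n)}_{ii}$, so that $\sum e^{(n+1)}_{kk}=\mathbf 1$ and condition (c) holds on the diagonal. For the full set of matrix units at level $n+1$, define the ``connecting'' partial isometries by conjugating $v$ and $v^\ast$ and the identity across the level-$n$ units: concretely, for indices $(k,l)$ at level $n+1$ lying in level-$n$ blocks $(i,j)$ with $i\ne j$, or within a block, set
\[
e^{(n+1)}_{kl}:=e^{(n+1)}_{k,2i-1}\;e^{(n)}_{ij}\;e^{(n+1)}_{2j-1,l},
\]
where the ``intra-block'' units $e^{(n+1)}_{2i-1,2i}:=e^{(n)}_{i1}\,v^\ast\,e^{(n)}_{1i}$, $e^{(n+1)}_{2i,2i-1}:=e^{(n)}_{i1}\,v\,e^{(n)}_{1i}$, and $e^{(n+1)}_{2i-1,2i-1}$ above serve as building blocks. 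A direct computation with the relations at level $n$ then verifies (a) and (b) at level $n+1$, and (c) is built in by construction. This closes the induction.

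The main obstacle, and the only place the hypothesis is really used, is the splitting step: one must know that every nonzero projection in a type II$_1$ von Neumann algebra dominates a ``halved'' subprojection equivalent to its complement within that projection. I would handle this via the dimension function $\Delta$ from the cited result \cite[Theorem 8.4.3]{KRII}: restricting attention to $p\cN p$, the function $\Delta$ takes all values in $[0,\Delta(p)]$ within the center in an order-dense way (this is the content of $\cN$ being continuous, i.e.\ having no abelian, hence no minimal, summand on that corner), so a projection $q\le p$ with $\Delta(q)=\tfrac12\Delta(p)$ exists, and then $q\sim p-q$ by property (iii) of $\Delta$ together with $\Delta((p-q))=\Delta(p)-\Delta(q)=\tfrac12\Delta(p)$ via properties (ii),(iv). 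Everything else is bookkeeping with matrix units. One should also note at the end that the norm-closed $\ast$-subalgebra generated by $\{e^{(n)}_{ij}\}$, with its weak closure, is isomorphic to $\otimes_{k=1}^\infty M_2(\mathbb C)$, but for the statement as phrased only the existence of the coherent matrix unit system is needed.
\end{proof}
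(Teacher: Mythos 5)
The paper gives no proof of this lemma at all: it is introduced with the remark that it is ``a well-known result'' asserting the existence of a copy of the hyperfinite II$_1$ factor $\otimes_{k=1}^{\infty}M_2(\mathbb{C})$ inside any type II$_1$ algebra, so there is nothing in the text to compare your argument against line by line. Your proposal supplies the standard construction, and it is essentially correct. The two load-bearing points are both handled properly: (i) the halving step, where you use that in a type II$_1$ algebra the center-valued dimension function attains the value $\tfrac12\Delta(p)$ on a subprojection of $p$, so that $q\sim p-q$ by property (iii) together with additivity (ii) --- this is exactly where the type II$_1$ hypothesis enters, and it is equivalent to the usual halving lemma for continuous von Neumann algebras; and (ii) the transport of the splitting from the corner $e^{(n)}_{11}\cN e^{(n)}_{11}$ to all other blocks by conjugation with the level-$n$ units $e^{(n)}_{i1}$, $e^{(n)}_{1j}$. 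I checked the identities your construction needs: with $e^{(n+1)}_{2i-1,2j-1}=e^{(n)}_{i1}qe^{(n)}_{1j}$ and $e^{(n+1)}_{2i,2j}=e^{(n)}_{i1}(p-q)e^{(n)}_{1j}$ one gets $e^{(n+1)}_{2i-1,2j-1}+e^{(n+1)}_{2i,2j}=e^{(n)}_{i1}pe^{(n)}_{1j}=e^{(n)}_{ij}$, which is condition (c), and the products $qv=0$, $v^\ast pv=q$, $vpv^\ast=p-q$ make the verification of (a) and (b) the routine computation you describe. The only cosmetic imprecision is the phrase ``order-dense'': in a type II$_1$ algebra the dimension function on subprojections of $p$ attains \emph{every} central value in $[0,\Delta(p)]$ exactly, which is what you actually use; but this does not affect the argument.
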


\subsection{Murray-von Neumann algebra}

A densely defined closed linear operator $x : \textrm{dom}(x) \to  H$
(here the domain $\textrm{dom}(x)$ of $x$ is a dense linear subspace in $H$) is said to be \textit{affiliated} with $\mathcal{M}$
if $yx \subset  xy$ for all $y$ from the commutant $\mathcal{M}'$  of the algebra $\mathcal{M}.$

A linear operator $x$ affiliated with $\mathcal{M}$ is called \textit{measurable} with respect to $\mathcal{M}$ if
$e_{(\lambda,\infty)}(|x|)$ is a finite projection for some $\lambda>0.$ Here
$e_{(\lambda,\infty)}(|x|)$ is the  spectral projection of $|x|$ corresponding to the interval $(\lambda, +\infty).$
We denote the set of all measurable operators by $S(\mathcal{M}).$

Let $x, y \in  S(\mathcal{M}).$ It is well known that $x+y$ and
$xy$ are densely-defined and preclosed
operators. Moreover, the (closures of) operators $x + y, xy$ and $x^\ast$  are also in $S(\mathcal{M}).$
When
equipped with these operations, $S(\mathcal{M})$ becomes a unital $\ast$-algebra over $\mathbb{C}$  (see \cite{Segal}). It
is clear that $\mathcal{M}$  is a $\ast$-subalgebra of $S(\mathcal{M}).$
In the case of finite von Neumann algebra $\M$, all operators affiliated with $\M$ are measurable and the algebra $\sm$ is referred to as the \emph{Murray-von Neumann algebra} associated with $\M$ (see \cite{KL}).

Let $\tau$ be a faithful normal finite trace on $\mathcal{M}.$
Consider the topology  $t_\tau$ of convergence in measure or \textit{measure topology} \cite{Nel}
on $S(\mathcal{M}),$ which is defined by
the following neighborhoods of zero:
$$
N(\varepsilon, \delta)=\{x\in S(\mathcal{M}): \exists \, e\in P(\mathcal{M}), \, \tau(\mathbf{1}-e)\leq\delta, \, xe\in
\mathcal{M}, \, \|xe\|_\cM\leq\varepsilon\},
$$
where $\varepsilon, \delta$
are positive numbers. The pair $(\sm, t_\tau)$ is a complete topological $\ast$-algebra.

 Recall, that an operator $x$ affiliated with $\mathcal{M}$ is called locally measurable (with respect to $\mathcal{M}$)
if there is a sequence $\{z_n\}_{n=0}^\infty\subset
Z(\mathcal{M})$  such that $z_n \uparrow \mathbf{1}$ and such that
$z_n (H)\subset  {\rm dom}(x)$ and $xz_n \in  S(\mathcal{M})$ for every $n \geq 0.$
We denote by $LS(\mathcal{M})$ the $\ast$-algebra of all locally measurable operators, with respect to the operations of strong sum and strong product.

Let $\cM$ be a finite von Neumann algebra with a faithful normal semi-finite trace $\tau.$ Then there exists a
family $\{z_i\}_{i\in I}$ of mutually orthogonal central projections in $\cM$ with $\bigvee\limits_{i\in I} z_i=\mathbf{1}$ and
such that $\tau(z_i)< +\infty$
for every $i\in  I$ (such a family exists because $\cM$ is a finite algebra). Then the algebra $S(\cM)$ is $\ast$-isomorphic
to the algebra $\prod_{i\in I}S(z_i\cM)$ (with the coordinate-wise operations and involution), i.e.
$$
S(\cM) \cong \prod_{i\in I}S(z_i\cM),
$$
($\cong$ denoting $\ast$-isomorphism of algebras) \cite{MC}.
This property implies that given any family $\{z_i\}_{i\in I}$ of mutually orthogonal central projections in $\cM$
with $\bigvee\limits_{i\in I} z_i=\mathbf{1}$ and a family of elements $\{x_i\}_{i\in I}$ in $S(\cM)$ there exists a unique element $x \in S(\cM)$ such
that $z_i x = z_ix_i$ for all $i \in  I.$
Let $t_{\tau_i}$
be the measure topology on $S(z_i\cM) = S(z_i\cM, \tau_i),$ where $\tau_i = \tau|_{z_i\cM},$ $i\in I.$ On the algebra
$S(\cM) \cong\prod\limits_{i\in I}S(z_i\cM)$ we consider the topology $t$ which is the Tychonoff product of the topologies  $t_{\tau_i},$
$i\in I.$ This topology coincides with so-called \textit{local measure topology} on $S(\cM)$ (see \cite[Remark 2.7]{MC}).

Let $\cM$ be a finite von Neumann algebra. A $\ast$-subalgebra  $\mathcal{A}$ of $S(\mathcal{M})$ is said to be
\emph{regular}, if it is a regular ring in the sense of von Neumann, i.e., if for every
$a\in\mathcal{A}$ there exists an element  $b\in\mathcal{A}$ such that
$aba=a.$

Given $a\in S(\cM)$  let  $a=v|a|$ be the polar decomposition of $a.$
Then $l(a) =v v^\ast$ and  $r(a)=v^\ast v$ are left and right supports of the element  $a$, respectively.
The projection  $s(a)=l(a)\vee r(a)$ is the support of the element $a$. It is clear that $r(a)=s(|a|)$ and  $l(a)=s(|a^*|)$.
There is a unique element $i(a)$ in
$S(\mathcal{M})$ such that $ai(a)=l(a),\ i(a)a=r(a),\ ai(a)a=a,$
$i(a)l(a)=i(a)$ and  $r(a)i(a)=i(a).$
The element  $i(a)$ is called the \emph{partial inverse} of the element $a.$
Therefore  $S(\mathcal{M})$ is a regular $*$-algebra  (see \cite{Berber}, \cite{Saito}).

    Let $e\in S(\cM)$ be an idempotent, i.e., $e^2=e.$ Then
    \begin{equation}\label{rangeofidem}
        l(e)e=e,\,\, el(e)=l(e).
    \end{equation}
    Indeed, the first equality is the definition of the left projection. Using equality $ei(e)=l(e)$ we obtain that
    \begin{align*}
        el(e)=e(ei(e))=e^2i(e)=ei(e)=l(e).
    \end{align*}
    Note that in \cite[Theorem 1.3]{Koliha} the existence of range projections with the above two properties is proved for bounded operators.

    Let us consider the decomposition $e=l(e)+u,$ where $u=e-l(e).$ Then $u^2=0.$ Indeed,
    \begin{align*}
        u^2 &= (e-l(e))^2=e^2-el(e)-l(e)e+l(e)^2=e-l(e)-e+l(e)=0.
    \end{align*}

    Later in Section 5 we need the following Lemma.
    \begin{lemma}\label{limitofrang}
        Let $\{e_n\}\subset S(\cM)$ be a sequence of idempotents such that $e_n\to e\in P(\cM)$ in the measure topology. Then $l(e_n)\to e$ in the same topology.
    \end{lemma}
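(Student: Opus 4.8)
The plan is to reduce the statement to pure algebra of idempotents plus the fact that the measure topology makes $S(\cM)$ a topological $\ast$-algebra. Write $p_n:=l(e_n)\in P(\cM)$ and $u_n:=e_n-p_n$. The relations \eqref{rangeofidem}, namely $p_ne_n=l(e_n)e_n=e_n$ and $e_np_n=e_nl(e_n)=l(e_n)=p_n$, give at once
\[
p_nu_n=u_n,\qquad u_np_n=0,\qquad p_nu_n^\ast=(u_np_n)^\ast=0
\]
(these also re-derive $u_n^2=0$, though I will not need that). The first step is then to expand, using only these three relations,
\[
e_n^\ast e_n=(p_n+u_n^\ast)(p_n+u_n)=p_n+u_n+u_n^\ast+u_n^\ast u_n,
\]
so that $e_n^\ast e_n-e_n=u_n^\ast+u_n^\ast u_n$.

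For the second step I would invoke the topological $\ast$-algebra structure of $(S(\cM),t_\tau)$: since $e=e^\ast=e^2$, continuity of the involution gives $e_n^\ast\to e$, and joint continuity of multiplication gives $e_n^\ast e_n\to e^2=e$; hence $u_n^\ast+u_n^\ast u_n=e_n^\ast e_n-e_n\to 0$, and applying the involution once more, $u_n+u_n^\ast u_n\to 0$. Multiplying this last relation on the \emph{left} by $p_n$ and using $p_nu_n=u_n$ together with $p_nu_n^\ast=0$, we get $p_n(u_n+u_n^\ast u_n)=u_n$, so it will suffice to show that the sequence $p_n(u_n+u_n^\ast u_n)$ tends to $0$; this then yields $u_n\to 0$ and therefore $l(e_n)=p_n=e_n-u_n\to e-0=e$.

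The only point requiring real care is the last passage: left multiplication by the \emph{varying} projections $p_n$ must preserve null sequences. This is exactly where the argument would fail if the $p_n$ were unbounded, so the key observation is that they are projections, hence $\|p_n\|\le 1$ for all $n$; and left multiplication by any sequence that is bounded in $\cM$-norm is continuous at $0$ in the measure topology. Indeed, if $w_n\in N(\varepsilon,\delta)$ witnessed by $q\in P(\cM)$ with $\tau(\mathbf{1}-q)\le\delta$, $w_nq\in\cM$ and $\|w_nq\|_\cM\le\varepsilon$, then $p_nw_nq\in\cM$ with $\|p_nw_nq\|_\cM\le\|w_nq\|_\cM\le\varepsilon$, so $p_nw_n\in N(\varepsilon,\delta)$. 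I expect no obstacle beyond this boundedness remark; everything else is routine bookkeeping with \eqref{rangeofidem} and the standard continuity properties of $S(\cM)$. If one reads the lemma with the local measure topology on a type II$_1$ algebra lacking a finite trace, the same reasoning applies coordinatewise under $S(\cM)\cong\prod_i S(z_i\cM)$, since $\|z_ip_n\|\le 1$ as well.
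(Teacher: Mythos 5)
Your proof is correct, and it takes a recognizably different route through the middle of the argument than the paper does, even though both start from the same decomposition $e_n=l(e_n)+u_n$ and both finish by showing $u_n\to 0$. The paper exploits the self-adjointness of $l(e_n)$ to get $u_n-u_n^\ast=e_n-e_n^\ast\to 0$, squares this and uses the nilpotency $u_n^2=(u_n^\ast)^2=0$ to deduce $u_nu_n^\ast+u_n^\ast u_n\to 0$, and then concludes via the operator order: from $0\le u_n^\ast u_n\le u_nu_n^\ast+u_n^\ast u_n$ it follows that $|u_n|^2\to 0$, hence $u_n\to 0$. You instead compute $e_n^\ast e_n-e_n=u_n^\ast+u_n^\ast u_n\to 0$, take adjoints, and isolate $u_n$ by left-multiplying by the projections $p_n=l(e_n)$, using the relations $p_nu_n=u_n$ and $p_nu_n^\ast=0$ from \eqref{rangeofidem} together with the observation that multiplication by a sequence uniformly bounded in $\|\cdot\|_\cM$ preserves null sequences in measure. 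What the paper's route buys is that it never needs the varying-multiplier continuity remark (its only order-theoretic input, that $0\le a\le b$ forces $\mu(a)\le\mu(b)$, is completely standard); what your route buys is that it avoids both the nilpotency of $u_n$ and the positivity argument, replacing them with the purely algebraic identities of the range projection plus a boundedness observation that you correctly identify as the one point needing justification and then justify properly. Both arguments are sound; yours is a legitimate alternative of essentially the same length.
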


    \begin{proof}
        Let us consider the decomposition $e_n=l(e_n)+u_n,$ $n\in \mathbb{N}.$ Then from the continuity of the involution in the measure topology we have
        \begin{align*}
            u_n -u_n^\ast &= (e_n-l(e_n))-(e_n-l(e_n))^\ast = e_n-l(e_n)-e_n^\ast+l(e_n)=e_n-e_n^\ast\to e-e^\ast=0,
        \end{align*}
        because $e=e^\ast\in P(\cM).$
        Thus $(u_n-u_n^\ast)^2\to 0$ in the measure topology. Since
        $u_n^2=0=(u_n^\ast)^2,$ it follows that
        $u_n u_n^\ast+u_n^\ast u_n\to 0.$ Further, from $0\le u_n^\ast u_n\le u_n u_n^\ast+u_n^\ast u_n,$ we have that $|u_n|^2=u_n^\ast u_n\to 0,$ thus
        $u_n\to 0.$ So,
        $l(e_n)=e_n-u_n\to e$ in the measure topology.
    \end{proof}

\subsection{Symmetric $F$-norm  on $S(\cM)$}

For the convenience of the reader, we recall the definition of $F$-norms.
Let $E$ be a linear space over the field $\mathbb{C}$.
A function $\left\|\cdot\right\|$ from $E$ to $\mathbb{R}$ is an $F$-norm, if for all $x,y \in E$ the following properties hold:
\begin{align*}
\left\|x\right\| \geqslant 0 , ~\left\|x\right\| = 0 \Leftrightarrow x=0 ;\\
\left\|\alpha x\right\| \leqslant \left\|x\right\|, ~\forall~\alpha \in \mathbb{C},  |\alpha| \le 1 ;\\
\lim _{\alpha \rightarrow 0}\left\|\alpha x\right\| = 0;\\
\left\|x+y \right\| \le  \left\|x\right\|+\left\|y\right\| .
\end{align*}
The couple $(E, \left\|\cdot\right\|)$ is called an \emph{$F$-normed} space.
For more detailed information concerning $F$-normed spaces see \cite{KPR}.

Let  $\mathcal{M}$  be a type II$_1$ von Neumann algebra with a faithful normal finite  trace $\tau$
and let $\mathcal{E}$ be a linear subspace in $S(\cM)$
equipped with an $F$-norm~$\left\|\cdot\right\|_{\mathcal{E}}$.
We say that
$\mathcal{E}$ is a \textit{symmetrically $F$-normed  space}  if
for $x \in
\mathcal{E}$, $y\in S(\cM)$ the inequality $\mu(y)\leq \mu(x)$ implies that $y\in \mathcal{E}$ and
$\|y\|_\mathcal{E}\leq \|x\|_\mathcal{E}$.
Here, $\mu(x)$ stands for the singular value function of $x\in S(\cM)$ (see \cite{HS}).

\begin{remark}\label{2.3}The following function
$$
\left\|x\right\|_{S(\cM)} = \inf_{t>0}\left\{t+\mu(t;x)\right\},~x\in S(\cM) ,$$
is a symmetric $F$-norm $\left\|\cdot\right\|_{S(\cM)}$ on $S(\cM)$, that is,
$\left\|\cdot\right\|_{S(\cM)}$ is an $F$-norm on $S(\cM)$ \cite[Remark 3.4]{HS}.
Moreover, the topology induced by $\left\|\cdot\right\|_{S(\cM)}$ is equivalent to the measure topology \cite[Proposition 4.1]{HS}.
\end{remark}

\subsection{Reduction of the general    case to the case of a von Neumann algebra  with a faithful normal finite    trace}

Let  $\mathcal{M}$ and $\cN$ be arbitrary type II$_1$ von Neumann algebras with  faithful normal semi-finite traces $\tau_{\cM}$ and
$\tau_{\cN},$ respectively.

Consider  a ring isomorphism  $\Phi$ from $S(\cM)$ onto $S(\cN).$
There exists a family $\{z_i\}_{i\in I}$  of
mutually orthogonal central projections in $\mathcal{M}$ with
$\bigvee\limits_{i\in I} z_i=\mathbf{1}$  such that $\tau_\cM(z_i)<+\infty$ for every $i\in  I.$
Since any ring isomorphism maps $S(Z(\cM))$ onto $S(Z(\cN)),$  for each fixed $i\in I$ there exists a family $\{z_{i,j}\}_{j\in J}$  of
    mutually orthogonal central projections in $z_i\mathcal{M}$ with
    $\bigvee\limits_{j\in J} z_{i,j}=z_i$ such that $\tau_\cN(\Phi(z_{i,j}))<+\infty$ for every $j\in J.$
Since $\Phi$ sends  each central projection in $\cM$ to a central projection
in $\cN,$  $\Phi$ maps each $S(z_{i,j}\cM)$ onto
$S(\Phi(z_{i,j})\cN)\equiv \Phi(z_{i,j})S(\cN)$ for all $i\in I, j\in J.$  So, it suffices to consider
the type II$_1$ von Neumann algebras  $\mathcal{M}$ and $\cN$ with  faithful normal finite traces $\tau_{\cM}$ and
$\tau_{\cN},$ respectively.

In the next three Sections below  $\mathcal{M}$ and $\cN$ are supposed to be  arbitrary type II$_1$ von Neumann algebras with  faithful normal finite traces $\tau_{\cM}$ and
$\tau_{\cN},$ respectively.

\section{Real-linearity of ring isomorphisms}

In this section we prove Theorem \ref{ringisomorphism} in a series of Lemmas.

Suppose the contrary and assume that $\Phi$ is a  ring isomorphism from $S(\mathcal{M})$ onto
$S(\cN)$ which is not a real algebra isomorphism.

Let $\left\{e^{(n)}_{ij}:\  n=0,1,\dots, \ i,j=1,\dots,2^n\right\}$ be the system of matrix units as in Lemma~\ref{lem_matrix_units}.
Put $u_0=\mathbf{1}$  and for each $n\ge 1$ take the unitary
\begin{eqnarray}\label{unitary}
& u_n & = \sum\limits_{i=1}^{2^{n}-1}e_{i, i+1}^{(n)}+e_{2^n, 1}^{(n)}\in \cN.
\end{eqnarray}
In the proofs of real-linearity and automatic continuity of ring isomorphisms between the algebras $S(\cM)$ and $S(\cN)$ we  will essentially use this family of unitaries.

Denote
\begin{eqnarray*}\label{unitaryv}
v_n=\Phi^{-1}(u_n)\in S(\cM),\, n\ge 0.
\end{eqnarray*}
 Note that $u^{2^n}_n = \mathbf{1}\in \cN$  and thus $v^{2^n}_n = \Phi^{-1}\left(u^{2^n}_n\right)=\mathbf{1}\in \cM.$

\begin{lemma}\label{lemma3.4}
For $a \in  S(\mathcal{M})$ and a natural number $n\ge 0$ consider the following operator in $S(\cM)$
\begin{equation*}\label{defxn}
x  = \sum\limits_{i=0}^{2^{n}-1} (-1)^i v_n^{i} a   v_n^{-i}.
\end{equation*}
Then the following equalities hold:
        \begin{eqnarray}\label{vvvv}
    v_n x v_n^{-1}=-x
    \end{eqnarray}
    and
    \begin{eqnarray}\label{invv}
    v_m x v_m^{-1}=x
    \end{eqnarray}
    for all $m<n.$
\end{lemma}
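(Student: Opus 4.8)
The plan is to verify both equalities by direct manipulation of the defining sum, using only the relations $v_n^{2^n}=\mathbf{1}$ and the commutation rule $v_m^{2^m}=\mathbf{1}$, together with the fact that for $m<n$ the element $v_m$ lies in the ``lower level'' of the matrix-unit hierarchy and therefore commutes with $v_n^{2^m}$. For the first equality \eqref{vvvv}, I would conjugate $x$ by $v_n$ term by term:
\begin{align*}
v_n x v_n^{-1} &= \sum_{i=0}^{2^n-1}(-1)^i v_n^{i+1} a v_n^{-(i+1)}
= \sum_{j=1}^{2^n}(-1)^{j-1} v_n^{j} a v_n^{-j}.
\end{align*}
Now the $j=2^n$ term equals $(-1)^{2^n-1}v_n^{2^n}av_n^{-2^n}=(-1)^{2^n-1}a=-a$ since $2^n$ is even (for $n\ge 1$; the case $n=0$ is trivial because then $x=a$ and $v_0=\mathbf 1$), and reindexing the remaining terms $j=1,\dots,2^n-1$ back to $i=j$ and adding the recovered $j=0=2^n$ contribution $-a=(-1)^0(-1)\cdot a$ shows the whole sum equals $-\sum_{i=0}^{2^n-1}(-1)^i v_n^i a v_n^{-i}=-x$. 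This is the easy half and is essentially a telescoping/periodicity argument: conjugation by $v_n$ is a cyclic shift of period $2^n$ on the index, and the sign $(-1)^i$ picks up a global $-1$ under a shift by one because $2^n$ is even.

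For the second equality \eqref{invv}, fix $m<n$ and write $2^n = 2^m\cdot 2^{n-m}$, grouping the sum $x=\sum_{i=0}^{2^n-1}(-1)^i v_n^i a v_n^{-i}$ into blocks of length $2^m$:
\begin{equation*}
x = \sum_{k=0}^{2^{n-m}-1}\ \sum_{r=0}^{2^m-1} (-1)^{2^m k + r}\, v_n^{2^m k + r} a v_n^{-(2^m k + r)}
= \sum_{k=0}^{2^{n-m}-1} v_n^{2^m k}\Bigl(\sum_{r=0}^{2^m-1}(-1)^r v_n^{r} a v_n^{-r}\Bigr) v_n^{-2^m k},
\end{equation*}
using that $(-1)^{2^m k}=1$. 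The inner parenthesized operator is exactly the operator ``$x$ at level $m$'' applied to $a$, so by \eqref{vvvv} (at level $m$) conjugation by $v_m$ sends it to its negative; but that is the wrong sign, so instead I need the relation at level $m$ differently. The cleaner route: I would first establish \eqref{invv} directly from the identity $v_m v_n = -v_n v_m$ style relations if they hold — but they need not — so instead I fall back on the block structure above together with the observation that $v_m$ commutes with each $v_n^{2^m k}$. Indeed, from the matrix-unit relation (c) in Lemma~\ref{lem_matrix_units} one checks that $u_m$ (hence $v_m=\Phi^{-1}(u_m)$, applying $\Phi^{-1}$ which is a ring isomorphism) commutes with $u_n^{2^m}$ (hence with $v_n^{2^m}$): the point is that $u_n^{2^m}$ is the cyclic shift of the $2^{n-m}$ coarse blocks, which acts on the index set ``through the level-$m$ grouping'' and therefore commutes with the level-$m$ shift $u_m$. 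Granting this, conjugation by $v_m$ passes through each $v_n^{2^m k}$ and acts only on the inner sum $\sum_{r=0}^{2^m-1}(-1)^r v_m v_n^{r} a v_n^{-r} v_m^{-1}$; since $v_m$ commutes with $v_n^{r}\cdots$? — it does not in general.

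Therefore the honest plan is: prove \eqref{invv} by induction on $n-m$, peeling off one level at a time. For the base case $n-m=0$ there is nothing to prove ($v_mxv_m^{-1}=x$ when the relevant level equals $m$ is vacuous since the statement is for $m<n$). For the inductive step, I split $x$ at the top level into two halves of length $2^{n-1}$,
\begin{equation*}
x = \sum_{i=0}^{2^{n-1}-1}(-1)^i v_n^i a v_n^{-i} \ +\ \sum_{i=0}^{2^{n-1}-1}(-1)^{2^{n-1}+i} v_n^{2^{n-1}+i} a v_n^{-(2^{n-1}+i)},
\end{equation*}
and since $2^{n-1}$ is even for $n\ge 2$ (the case $n=1$ forces $m=0$, $v_0=\mathbf 1$, trivial), the second half equals $v_n^{2^{n-1}}\bigl(\sum_{i=0}^{2^{n-1}-1}(-1)^i v_n^i a v_n^{-i}\bigr)v_n^{-2^{n-1}}$. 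Here $w:=v_n^2$ satisfies $w^{2^{n-1}}=\mathbf 1$ and one checks $w$ plays, at level $n-1$, exactly the role $v_{n-1}$ plays; more precisely I would identify $v_n^2$ with $v_{n-1}$ up to the conjugation that is harmless for our sum, or — cleaner still — redo the level-grouping computation so that the inner sum becomes the level-$(m{+}1)$ operator and apply the induction hypothesis ``$v_m$ commutes with that operator'' one rung lower. The main obstacle, and the step I would spend the most care on, is precisely this bookkeeping: making rigorous the claim that $v_m$ commutes with $v_n^{2^m k}$ and that the inner block sum is fixed (not negated) under conjugation by $v_m$. Everything rests on relation (c) of Lemma~\ref{lem_matrix_units}, which encodes that the level-$n$ system refines the level-$m$ system, so that $u_n^{2^m}$ acts as a permutation of index blocks that is invariant under the level-$m$ cyclic shift $u_m$; transporting this through the ring isomorphism $\Phi^{-1}$ gives the needed commutation for the $v$'s, after which \eqref{invv} follows by the block computation and \eqref{vvvv} follows from the one-step shift argument above.
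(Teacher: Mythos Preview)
Your argument for \eqref{vvvv} is fine and matches the paper's: conjugation by $v_n$ shifts the index by one, $v_n^{2^n}=\mathbf{1}$ closes the cycle, and the even length $2^n$ flips the global sign.

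For \eqref{invv} there is a genuine gap. You treat $v_m$ and $v_n$ as essentially unrelated elements and then try to manufacture commutation relations between $v_m$ and powers of $v_n$ via block decompositions and induction; you yourself flag that the needed commutation ``does not in general'' hold and that ``the main obstacle'' is exactly this bookkeeping. The missing observation, which the paper uses and which dissolves all of this, is that $v_m$ \emph{is} a power of $v_n$: from the matrix-unit relation (c) one computes directly that $u_{n+1}^2=u_n$, hence by iteration $u_m=u_n^{2^{n-m}}$ for every $m<n$, and applying the ring isomorphism $\Phi^{-1}$ gives
\[
v_m \;=\; v_n^{\,2^{n-m}}.
\]
Once this is in hand, \eqref{invv} is a one-line consequence of \eqref{vvvv}: conjugation by $v_m$ is conjugation by $v_n$ iterated $2^{n-m}$ times, each iteration multiplies $x$ by $-1$, and $2^{n-m}$ is even, so $v_m x v_m^{-1}=(-1)^{2^{n-m}}x=x$. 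You actually brush against this when you write ``identify $v_n^2$ with $v_{n-1}$'' and ``$w:=v_n^2$ \ldots plays, at level $n-1$, exactly the role $v_{n-1}$ plays,'' but you never commit to it as an equality; it is an exact equality, not merely ``up to conjugation,'' and proving it (via $u_{n+1}^2=u_n$) is the whole content of the second half of the lemma.
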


\begin{proof} We have that
\begin{eqnarray*}
 &v_n x v_n^{-1} &= v_n\Big(\sum\limits_{i=0}^{2^{n}-1} (-1)^i v_n^{i} a    v_n^{-i}\Big)v_n^{-1}=
 -\sum\limits_{i=0}^{2^{n}-1} (-1)^{i+1} v_n^{i+1} a   v_n^{-(i+1)}=-x.
\end{eqnarray*}

We claim that
    \begin{eqnarray*}
    v_m & = & v_n^{2^{n-m}}
    \end{eqnarray*}
for all  $m<n.$
Since $v_n=\Phi^{-1}(u_n)$ and $\Phi$ is a ring isomorphism, it suffices to show that $u_{n+1}^2=u_n$ for the unitaries defined as in \eqref{unitary}. From the property (c)  of the matrix units we have
    \begin{eqnarray}\label{ccee}
        e_{i, i+1}^{(n)} & = &
        e_{2i-1, 2i+1}^{(n+1)} +e_{2i, 2i+2}^{(n+1)}.
    \end{eqnarray}
Using  the property  (a)   we obtain
    \begin{eqnarray*}
    &u_{n+1}^2 &    = \Big(\sum\limits_{i=1}^{2^{n+1}-1}e_{i, i+1}^{(n+1)}+e_{2^{n+1}, 1}^{(n+1)}\Big)
        \Big(\sum\limits_{j=1}^{2^{n+1}-1}e_{j, j+1}^{(n+1)}+e_{2^{n+1}, 1}^{(n+1)}\Big)\\
&=& \sum\limits_{i=1}^{2^{n+1}-2}e_{i, i+2}^{(n+1)}+e_{2^{n+1}, 2}^{(n+1)}+e_{2^{n+1}-1, 1}^{(n+1)}\\
&=& \sum\limits_{i=1}^{2^{n}-1}\left(e_{2i-1, 2i+1}^{(n+1)}+e_{2i, 2i+2}^{(n+1)}\right)+\left(e_{2^{n+1}, 2}^{(n+1)}+e_{2^{n+1}-1, 1}^{(n+1)}\right)\\
&\stackrel{\eqref{ccee}}{=}& \sum\limits_{i=1}^{2^{n}-1}e_{i, i+1}^{(n)}+ e_{2^{n}, 1}^{(n)}=u_n.
\end{eqnarray*}
Finally,
\begin{eqnarray*}
v_m x v_m^{-1}& = & v_n^{2^{n-m}} x v_n^{-2^{n-m}} =v_n^{2^{n-m}-1}\left(v_n x v_n^{-1}\right)v_n^{-2^{n-m}+1}\\
&\stackrel{\eqref{vvvv}}{=}& -v_n^{2^{n-m}-1}x v_n^{-2^{n-m}+1}\stackrel{\eqref{vvvv}}{=}\ldots= x.
\end{eqnarray*}
The proof of Lemma is complete.
\end{proof}

\begin{lemma}\label{rzkzk}
There exists $\varepsilon>0$ with the following property. Let $t_k$ be a rational number, $k\ge 0.$
Then there exists  a real number $\lambda_k$  such that
\begin{itemize}
    \item[(i)] $\left\|t_k x_k\right\|_{S(\cM)}\le \frac{1}{2^k};$
    \item[(ii)] $\displaystyle \left\|\Phi(x_k)\right\|_{S(\cN)}\ge \varepsilon,$
\end{itemize}
where
\begin{equation}\label{def_xk}
x_k  = \lambda_k\sum\limits_{i=0}^{2^{k}-1} (-1)^i v_k^{i} a_k   v_k^{-i}
\end{equation}
and  $a_k=\Phi^{-1}\left(e^{(k)}_{1,1}\right).$
\end{lemma}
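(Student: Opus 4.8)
The plan is to show that $\Phi$ being a ring isomorphism but \emph{not} a real algebra isomorphism forces a kind of pathological behaviour that can be ``localized'' on the hyperfinite tower of Lemma~\ref{lem_matrix_units}, and that this pathology produces, for every rational $t$, an element whose $F$-norm is arbitrarily small but whose image has $F$-norm bounded away from $0$. The quantity $a_k = \Phi^{-1}(e^{(k)}_{1,1})$ is an idempotent in $S(\cM)$, and the central observation is that the element
$$
y_k \;=\; \sum_{i=0}^{2^k-1} (-1)^i v_k^{\,i} a_k v_k^{-i}
$$
satisfies, by Lemma~\ref{lemma3.4}, $v_k y_k v_k^{-1} = -y_k$ and $v_m y_k v_m^{-1} = y_k$ for all $m<k$; hence $\Phi(y_k) = \sum_{i=0}^{2^k-1}(-1)^i u_k^{\,i} e^{(k)}_{1,1} u_k^{-i}$ is a fixed, explicit element of $\cN$ built from the matrix units, and because the $u_k^i$ cyclically permute the diagonal matrix units $e^{(k)}_{jj}$ (with alternating signs), $\Phi(y_k)$ is an explicit self-adjoint element whose $F$-norm can be computed (or at least bounded below by a constant independent of $k$) directly from the trace-theoretic structure of the copy of $\cR$. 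So I would fix $\lambda_k$ to be a real scalar chosen so that $x_k = \lambda_k y_k$ has $\Phi(x_k)$ of norm exactly, say, $1$ — this is where the hypothesis that $\Phi$ is not real-linear enters: if $\Phi$ were real-linear, scaling $y_k$ by a real $\lambda_k$ would just scale $\Phi(y_k)$, and the whole construction would collapse, so the freedom to choose $\lambda_k$ producing (ii) while simultaneously getting (i) is precisely what the failure of real-linearity buys us.

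More concretely, the two requirements pull in opposite directions and must be reconciled. For (i), I need $\|t_k x_k\|_{S(\cM)} = |\lambda_k| \,\|t_k y_k\|_{S(\cM)} \le 2^{-k}$; since $y_k$ is a fixed operator built from $a_k$ and the unitaries $v_k$, the triangle inequality gives $\|t_k y_k\|_{S(\cM)} \le 2^k \|t_k a_k\|_{S(\cM)}$, and because $t_k a_k = t_k \Phi^{-1}(e^{(k)}_{11})$ — here one uses that $\Phi$, being a ring isomorphism, restricts to a ring \emph{automorphism} of the rational ``scalars'' $\mathbb{Q}\cdot\mathbf 1$ and hence $\Phi^{-1}(t_k e^{(k)}_{11}) = t_k a_k$ — we can make $|\lambda_k|$ small enough to absorb the factor $2^k$ and achieve (i). For (ii), the point is that $\Phi(x_k) = \Phi(\lambda_k y_k)$, and the fact that $\Phi$ is \emph{not} real-linear means precisely that $\lambda \mapsto \Phi(\lambda \cdot)$ is not bounded near $0$ on suitable elements; so one can exploit the discontinuity/non-linearity to pick $\lambda_k$ small (securing (i)) yet with $\|\Phi(\lambda_k y_k)\|_{S(\cN)} \ge \varepsilon$ for a uniform $\varepsilon>0$. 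The right way to extract a \emph{uniform} $\varepsilon$ is to argue by contradiction: if no such $\varepsilon$ existed, then $\Phi$ restricted to the real span of elements of the form $y_k$ (equivalently, to the image under $\Phi^{-1}$ of the fixed element $\Phi(y_k)\in\cN$) would be continuous at $0$ along rational scalings, and a standard additivity argument (a ring homomorphism that is continuous at $0$ on a line is $\mathbb{R}$-linear on that line) would contradict the standing assumption that $\Phi$ is not real-linear.

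The main obstacle I anticipate is making the choice of $\lambda_k$ precise and \emph{simultaneously} satisfying both inequalities with a single $\varepsilon$ that does not depend on $k$: one has to know that the ``size'' of $\Phi(y_k)$ in $S(\cN)$ does not degenerate as $k\to\infty$, i.e. that the explicit self-adjoint element $\sum_{i}(-1)^i u_k^i e^{(k)}_{11} u_k^{-i}$ has $F$-norm bounded below uniformly in $k$. This should follow from the trace computation: this element is a signed sum of $2^k$ orthogonal rank-type projections $u_k^i e^{(k)}_{11} u_k^{-i}$, half with coefficient $+1$ and half with $-1$ (up to the parity bookkeeping), so its singular value function $\mu(\cdot)$ is identically $1$ on a set of trace $\tau_{\cN}(e^{(k)}_{11})\cdot 2^k = \tau_\cN(\mathbf 1)$ — wait, more carefully, on a set whose trace is the sum of the $\tau_\cN(e^{(k)}_{ii})$, which is $\tau_\cN(\mathbf 1)$ — giving $\|\Phi(y_k)\|_{S(\cN)} = \inf_{t>0}\{t+\mu(t;\Phi(y_k))\}$ bounded below by a constant depending only on $\tau_\cN(\mathbf 1)$. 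Once that uniform lower bound is in hand, one rescales by $\lambda_k$ to normalize $\|\Phi(x_k)\|_{S(\cN)}$ to sit above a fixed $\varepsilon$ while the freedom in $|\lambda_k|$ (and the $\mathbb{Q}$-homogeneity of $\Phi$ noted above) handles (i); the non-real-linearity of $\Phi$ is exactly what prevents these two demands from being contradictory. Packaging the contradiction cleanly — i.e. choosing the order of quantifiers so that $\varepsilon$ is produced first and then works for all rational $t_k$ — will require a little care, and I expect this bookkeeping, rather than any deep new idea, to be the delicate part.
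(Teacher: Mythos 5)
Your overall strategy matches the paper's: the element $\sum_{i=0}^{2^k-1}(-1)^i u_k^i e^{(k)}_{1,1}u_k^{-i}=\sum_{i=1}^{2^k}(-1)^{i-1}e^{(k)}_{ii}$ is indeed the explicit object that makes everything work, and the source of $\varepsilon$ is indeed the failure of real-linearity read as a discontinuity at $0$. But there is a genuine gap at the one point that matters: you never reduce condition (ii) to a statement about $\Phi$ on the scalar line $\mathbb{R}\mathbf{1}$, and neither of the two mechanisms you offer for producing a \emph{uniform} $\varepsilon$ works as stated. The ``uniform lower bound on $\left\|\Phi(y_k)\right\|_{S(\cN)}$ plus rescaling'' mechanism fails because $\left\|\cdot\right\|_{S(\cN)}$ is an $F$-norm, not a norm: it is not homogeneous (your identity $\left\|t_kx_k\right\|=|\lambda_k|\left\|t_ky_k\right\|$ is already false), and a lower bound on $\left\|\Phi(y_k)\right\|$ gives no lower bound on $\left\|\Phi(\lambda_k\mathbf{1})\Phi(y_k)\right\|$ --- if $\lambda\mapsto\Phi(\lambda\mathbf{1})$ happened to be continuous at $0$, this product would tend to $0$ no matter how large $\left\|\Phi(y_k)\right\|$ is, so ``rescaling to sit above $\varepsilon$'' is not something you can do. The contradiction mechanism fails as stated because continuity (hence real-linearity) of $\Phi$ at $0$ along the single line $\mathbb{R}y_k$ does not by itself contradict the standing assumption that $\Phi$ is not real-linear on all of $S(\cM)$.

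The missing step, which the paper makes explicit, is the factorization $\Phi(\lambda y_k)=\Phi(\lambda\mathbf{1})\Phi(y_k)$ together with two observations: $\Phi(\lambda\mathbf{1})$ is \emph{central} in $S(\cN)$, and $\Phi(y_k)=\sum_{i=1}^{2^k}(-1)^{i-1}e^{(k)}_{ii}$ is a self-adjoint \emph{unitary}. Hence $|\Phi(x_k)|=|\Phi(\lambda_k\mathbf{1})|$ and, by symmetry of the $F$-norm, $\left\|\Phi(x_k)\right\|_{S(\cN)}=\left\|\Phi(\lambda_k\mathbf{1})\right\|_{S(\cN)}$ exactly; no lower bound on $\left\|\Phi(y_k)\right\|$ is needed or used. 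This identity is what decouples $\varepsilon$ from $k$ and from $t_k$. The paper then argues directly rather than by contradiction: since $\Phi(\lambda x)=\Phi(\lambda\mathbf{1})\Phi(x)$, non-real-linearity of $\Phi$ forces the additive map $\lambda\mapsto\Phi(\lambda\mathbf{1})$ to be discontinuous at $0$, so one fixes once and for all an $\varepsilon>0$ and a real null sequence $\{\mu_n\}$ with $\left\|\Phi(\mu_n\mathbf{1})\right\|_{S(\cN)}\ge\varepsilon$; given $t_k$, one takes $\lambda_k=\mu_{n_k}$ with $n_k$ so large that $\left\|\mu_{n_k}t_ky_k\right\|_{S(\cM)}\le 2^{-k}$ (using only the $F$-norm axiom $\lim_{\alpha\to0}\left\|\alpha z\right\|=0$), and (ii) is then automatic from the displayed identity. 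With these two repairs your argument becomes the paper's proof.
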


\begin{proof}
 Since $\Phi$ is additive, it follows that  it is  rational linear, in particular, $\Phi(r\mathbf{1})=r\mathbf{1}$ for all $r\in \mathbb{Q}.$ By the assumption $\Phi$ is not $\mathbb{R}$-linear and therefore
the mapping
$$
\mu\in \mathbb{R}\to \Phi(\mu\mathbf{1})\in S(\cN)
$$
is discontinuous at the point $0.$
Therefore  there exists a number $\varepsilon>0$ and a sequence $\mu_1, \ldots, \mu_n, \ldots$ in $\mathbb{R}$ such that
\begin{center}
$\mu_n\to 0$ as $n\to\infty$ and $||\Phi(\mu_n\mathbf{1})||_{S(\cN)}\ge \varepsilon$ for all $n\in \mathbb{N}.$
\end{center}

Let us find the required  number $\lambda_k.$
Since  $\mu_n t_k \sum\limits_{i=0}^{2^{k}-1} (-1)^i v_k^{i} a_k
v_k^{-i}\to 0$ as $n\to \infty,$ there is a number $\mu_{n_k}$ such that
$\left\|t_k x_k\right\|_{S(\cM)}\le \frac{1}{2^k},$ where $x_k$ is the element of the form \eqref{def_xk} with  $\lambda_k=\mu_{n_k}.$

Further note that
\begin{eqnarray*}
\Phi(x_k) &=&\Phi\Big(\lambda_k\sum\limits_{i=0}^{2^{k}-1} (-1)^i v_k^{i} a_k   v_k^{-i}\Big) =
\Phi(\lambda_k\mathbf{1})\sum\limits_{i=0}^{2^{k}-1} (-1)^i u_k^{i} \Phi(a_n)   u_k^{-i}\\
&=&
\Phi(\lambda_k\mathbf{1}) \sum\limits_{i=0}^{2^{k}-1} (-1)^i u_k^{i} e_{1, 1}^{(k)}   u_k^{-i}=
\Phi(\lambda_k\mathbf{1}) \sum\limits_{i=1}^{2^{k}} (-1)^{i-1}e_{i, i}^{(k)}.
\end{eqnarray*}
Since $\Phi(\lambda_k\mathbf{1})$ is a central element in  $S(\cN),$ it follows that
$|\Phi(x_k)|=|\Phi(\lambda_k\mathbf{1})|.$
Hence $\left\|\Phi(x_k)\right\|_{S(\cN)}=\left\|\Phi(\lambda_k\mathbf{1})\right\|_{S(\cN)}\ge \varepsilon$
as required. The proof is complete.
\end{proof}

\begin{lemma}\label{constructiontk_x_k}
There exist  a sequence $\left\{x_k\right\}_{k\ge0}$ in $S(\cM)$ and an increasing sequence $\{t_k\}_{k\ge 0}$
  in $\mathbb{Q}$ with $t_k \to \infty$ such that
    \begin{itemize}
        \item[(1)] each $x_k$ is defined as in \eqref{def_xk};
        \item[(2)] $\displaystyle \left\|t_k x_k\right\|_{S(\cM)} \le  \frac{1}{2^k};$
        \item[(3)] $\displaystyle \left\|\Phi(x_k)\right\|_{S(\cN)}\ge \varepsilon;$
         \item[(4)] $\displaystyle \left\|\frac{1}{2t_k}\sum\limits_{i=0}^{k-1}u_k\Phi(t_i x_i)u_k^{-1}-\Phi(t_i x_i)\right\|_{S(\cN)} \le  \frac{\varepsilon}{2}.$
    \end{itemize}
    \end{lemma}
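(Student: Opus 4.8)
The plan is to construct the sequences $\{x_k\}_{k\ge 0}$ and $\{t_k\}_{k\ge 0}$ by induction on $k$, feeding the output of Lemma~\ref{rzkzk} into the construction and using Lemma~\ref{lemma3.4} to control the correction term~(4). Fix the constant $\varepsilon>0$ provided by Lemma~\ref{rzkzk}. Suppose that $t_0<t_1<\cdots<t_{k-1}$ in $\mathbb{Q}$ and $x_0,\ldots,x_{k-1}\in S(\cM)$ have already been chosen so that (1)--(4) hold at the corresponding indices (for $k=0$ there is nothing to assume, as the sum in~(4) is empty). I will now choose $t_k$ and $x_k$.

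First I would pick the rational number $t_k$. The finitely many elements $\Phi(t_0x_0),\ldots,\Phi(t_{k-1}x_{k-1})$ are fixed, and each $\bigl\|u_k\Phi(t_ix_i)u_k^{-1}-\Phi(t_ix_i)\bigr\|_{S(\cN)}$ is some finite number depending only on data already constructed (here $u_k\in\cN$ is the unitary from~\eqref{unitary}). Hence, choosing $t_k\in\mathbb{Q}$ large enough — in particular $t_k>t_{k-1}$, $t_k>k$ so that the sequence is increasing with $t_k\to\infty$, and $t_k$ so large that
\begin{equation*}
\frac{1}{2t_k}\sum_{i=0}^{k-1}\bigl\|u_k\Phi(t_ix_i)u_k^{-1}-\Phi(t_ix_i)\bigr\|_{S(\cN)}\le\frac{\varepsilon}{2}
\end{equation*}
— we secure property~(4) by the triangle inequality for the $F$-norm $\|\cdot\|_{S(\cN)}$, regardless of what $x_k$ turns out to be (note $x_k$ does not appear in the sum in~(4)). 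With $t_k$ now fixed, I apply Lemma~\ref{rzkzk} with this rational $t_k$: it furnishes a real number $\lambda_k$ such that, setting $a_k=\Phi^{-1}(e^{(k)}_{1,1})$ and $x_k=\lambda_k\sum_{i=0}^{2^k-1}(-1)^iv_k^ia_kv_k^{-i}$ as in~\eqref{def_xk}, we have $\|t_kx_k\|_{S(\cM)}\le 2^{-k}$ and $\|\Phi(x_k)\|_{S(\cN)}\ge\varepsilon$. This gives (1), (2), (3) at index $k$, completing the induction.

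The only real point requiring care — and the step I would flag as the main obstacle — is making sure that property~(4) is genuinely achievable before $x_k$ is known, i.e.\ that the order of quantifiers works out: one must choose $t_k$ first (using only $x_0,\ldots,x_{k-1}$), and only afterwards invoke Lemma~\ref{rzkzk} to produce $\lambda_k$ and hence $x_k$. Since the sum in~(4) involves only indices $i\le k-1$, this causes no circularity. A secondary subtlety is that Lemma~\ref{rzkzk} requires $t_k\in\mathbb{Q}$, which is compatible with the growth conditions $t_{k-1}<t_k$, $t_k\to\infty$ since $\mathbb{Q}$ is unbounded and dense; and Lemma~\ref{rzkzk} applies to \emph{any} rational $t_k$, so the large value we are forced to take for~(4) is admissible. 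Everything else is the triangle inequality for the $F$-norm together with bookkeeping, so the construction goes through.
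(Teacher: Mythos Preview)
Your approach is the same as the paper's: construct the sequences by induction, at each stage first choosing $t_k$ large enough (using only $x_0,\ldots,x_{k-1}$) to control~(4), and then invoking Lemma~\ref{rzkzk} with this $t_k$ to obtain $x_k$ satisfying (1)--(3). The order-of-quantifiers issue you flag is exactly the point, and you handle it correctly.

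There is one slip in the justification of~(4). You write
\[
\frac{1}{2t_k}\sum_{i=0}^{k-1}\bigl\|u_k\Phi(t_ix_i)u_k^{-1}-\Phi(t_ix_i)\bigr\|_{S(\cN)}\le\frac{\varepsilon}{2}
\]
and then claim this yields~(4) via the triangle inequality. But $\|\cdot\|_{S(\cN)}$ is an $F$-norm, not a norm: in general $\|\alpha y\|\ne|\alpha|\,\|y\|$, so you cannot pull the scalar $\tfrac{1}{2t_k}$ outside the $F$-norm, and your displayed inequality does not bound $\bigl\|\tfrac{1}{2t_k}\sum_i(\cdots)\bigr\|_{S(\cN)}$. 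The fix is immediate and is what the paper does: apply the $F$-norm axiom $\lim_{\alpha\to 0}\|\alpha y\|=0$ directly to the fixed element
\[
y=\sum_{i=0}^{k-1}\bigl(u_k\Phi(t_ix_i)u_k^{-1}-\Phi(t_ix_i)\bigr)
\]
to choose $t_k\in\mathbb{Q}$ with $\bigl\|\tfrac{1}{2t_k}\,y\bigr\|_{S(\cN)}\le\varepsilon/2$. (Alternatively, apply the axiom to each summand separately and then use the triangle inequality on $\sum_i\bigl\|\tfrac{1}{2t_k}(\cdots)\bigr\|_{S(\cN)}$.) With this correction your argument goes through and coincides with the paper's. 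Incidentally, Lemma~\ref{lemma3.4} plays no role here; it is used only later, in the proof of Theorem~\ref{ringisomorphism}.
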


\begin{proof}
The proof is by induction on $k.$ For $k=0,$ since $u_0=\mathbf{1}$ and $t_0=1,$ it follows directly from Lemma~\ref{rzkzk}.

Now assume that we have constructed $x_0, \ldots, x_k$ and $t_0, \ldots, t_k$ with properties (1)-(4).

Due to the definition of $F$-norms, we have
$$
\lim_{t \rightarrow \infty}\left\|\frac{1}{2t}\sum\limits_{i=0}^{k}u_{k+1}\Phi(t_i x_i)u_{k+1}^{-1}-\Phi(t_i x_i)\right\|_{S(\cN)}= 0.
$$
Hence,
we can take $t_{k+1}>t_k+k$ such that
$$
\left\|\frac{1}{2t_{k+1}}\sum\limits_{i=0}^{k}u_{k+1}\Phi(t_ix_i)u_{k+1}^{-1}-\Phi(t_ix_i)\right\|_{S(\cN)} \le  \frac{\varepsilon}{2}.
$$

Further,  appealing to   Lemma~\ref{rzkzk}, we obtain   $x_{k+1}$ as in \eqref{def_xk} such that
$$
\displaystyle \left\|t_{k+1}x_{k+1}\right\|_{S(\cM)} \le  \frac{1}{2^{k+1}}
$$
and
$$
\displaystyle \left\|\Phi(x_{k+1})\right\|_{S(\cN)}\ge \varepsilon.
$$
Now, the sequences $\{x_k\}_{k=0}^\infty$ and $\{t_k\}_{k=0}^\infty $
satisfy the properties claimed in the lemma.
\end{proof}

\begin{proof}[Proof of Theorem~\ref{ringisomorphism}]

\

Let $x_0,\ldots, x_k,\ldots$ and $t_0,\ldots, t_k,\ldots$  be as in Lemma~\ref{constructiontk_x_k}. Set
$$
x=\sum\limits_{k=1}^\infty y_k,
$$
where $y_k=t_kx_k.$ Lemma~\ref{constructiontk_x_k} (2) guarantees that the above series is $||\cdot||_{S(\cM)}$-norm convergent in $S(\cM).$

In the proof below, we use the equality $v_n x_k v_n^{-1}=x_k$ for $k>n$ from Lemma \ref{lemma3.4}.
 Taking into account that $S(\cM),$  equipped with the measure topology, is  a topological algebra, in particular, the multiplication is continuous we obtain that
\begin{align*}
v_n x v_n^{-1}-x & =\sum\limits_{k=n+1}^\infty \left(v_n y_k v_n^{-1} - y_k\right)+\sum\limits_{k=1}^n \left(v_n y_k v_n^{-1} - y_k\right)\\
\stackrel{\eqref{invv}}{=}& \sum\limits_{k=1}^{n}  \left(v_n y_k v_n^{-1} - y_k\right)
\stackrel{\eqref{vvvv}}{=}
-2y_n + \sum\limits_{k=1}^{n-1}  \left(v_n y_k  v_n^{-1}- y_k\right)
\end{align*}
and therefore
\begin{align*}
u_n \Phi(x) u_n^{-1}  -  \Phi (x)  = &  \Phi\left(v_n x v_n^{-1}-x\right)= -2\Phi(y_n)
 +
\sum\limits_{k=1}^{n-1}  \left(u_n \Phi(y_k) u_n^{-1}-
\Phi(y_k)\right).
\end{align*}
Recalling that $y_k=t_kx_k$, we have
\begin{align*}
\frac{1}{2t_n}\left(u_n \Phi(x) u_n^{-1}  -  \Phi(x)\right) & =
-\Phi(x_n)+\frac{1}{2t_n}\sum\limits_{k=1}^{n-1}  \left(u_n \Phi(t_k x_k)u_n^{-1} - \Phi(t_k x_k)\right).
\end{align*}
By Lemma~\ref{constructiontk_x_k}, we get
\begin{align*}
&\left\|-\Phi(x_n)\right\|_{S(\cN)} \ge \varepsilon;\\
&\Big\|\frac{1}{2t_n}\sum\limits_{k=1}^{n-1}  \left(u_n \Phi(t_k x_k)u_n^{-1} - \Phi(t_k x_k)\right)\Big\|_{S(\cN)}  \le \frac{\varepsilon}{2}.
\end{align*}
Using the triangle inequality of $F$-norm,  we obtain that
\begin{align*}
\Big\|\frac{1}{2t_n}\Big(u_n \Phi(x) u_n^{-1}  -  \Phi(x)\Big)\Big\|_{S(\cN)}  \ge  \frac{\varepsilon}{2}.
\end{align*}
However,  using the symmetricity of $\left\|\cdot\right\|_{S(\cN)}$ and the triangle inequality, we obtain that
\begin{align*}
\Big\|\frac{1}{2t_n}\Big(u_n \Phi(x) u_n^\ast  -  \Phi(x)\Big)\Big\|_{S(\cN)}  \le
\Big\|\frac{1}{2t_n} \Phi(x)\Big\|_{S(\cN)}+\Big\|\frac{1}{2t_n} \Phi(x)\Big\|_{S(\cN)}\to 0,
\end{align*}
as  $t_n\to \infty.$
This contradiction implies  that $\Phi$ is real linear.
\end{proof}

\section{Continuity of ring isomorphisms in the measure topology}

In this Section we  assume that  $\Phi$ is a  real algebra isomorphism from $S(\mathcal{M})$ onto
$S(\cN)$ which is discontinuous in the measure topology. In order to come to a contradiction
we should slightly modify the proof from the previous Section.

\begin{lem}\label{dyadic}
There exists a non-zero projection $z\in  P(Z(\cN))$ with the following property: for any  projection $e\in P(z\cN)$ there exists a sequence
$\{x_n\}$ in $S(\cM)$ such that $x_n \stackrel{t_{\tau_\cM}}{\longrightarrow} 0$  and $\Phi(x_n)\stackrel{t_{\tau_\cN}}{\longrightarrow} e.$
\end{lem}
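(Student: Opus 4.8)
The plan is to exploit the discontinuity of $\Phi$ at $0$ together with the trace-valued ``dyadic'' structure of a type~II$_1$ algebra. Since $\Phi$ is discontinuous in the measure topology, there is a sequence $\{a_n\}\subset S(\cM)$ with $a_n\stackrel{t_{\tau_\cM}}{\longrightarrow}0$ but $\Phi(a_n)\not\to 0$ in $t_{\tau_\cN}$; passing to a subsequence and using the $F$-norm description of the topology from Remark~\ref{2.3} together with the product decomposition $S(\cN)\cong\prod_{i\in I}S(z_i\cN)$, I would locate a non-zero central projection $z_0\in P(Z(\cN))$ and an $\varepsilon_0>0$ so that the ``bad'' behaviour is uniformly visible under compression by $z_0$, i.e. $\|z_0\Phi(a_n)\|_{S(z_0\cN)}\ge\varepsilon_0$ for infinitely many $n$ while $z_0 a_n\to 0$. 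Replacing $\cM,\cN$ by $\Phi^{-1}(z_0)\cM$ and $z_0\cN$, I may assume $\Phi$ is uniformly discontinuous at $0$, which is the analogue of the quantitative discontinuity statement extracted in Lemma~\ref{rzkzk} in the real-linear case.

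Next I would build, for a target projection $e$, an approximating sequence by the same averaging machinery as in Section~3. Using the matrix units $\{e^{(n)}_{ij}\}$ from Lemma~\ref{lem_matrix_units} in $\cN$ and the unitaries $u_n$ from \eqref{unitary}, note that $\frac{1}{2^n}\sum_{i=1}^{2^n}e^{(n)}_{ii}=\mathbf 1$ while the ``signed'' sums $\sum_{i=1}^{2^n}(-1)^{i-1}e^{(n)}_{ii}$ have trace controlled by $\Delta$; more relevantly, averages of the form $\frac{1}{2^n}\sum_{i=0}^{2^n-1}u_n^{i}\,p\,u_n^{-i}$ for a projection $p=e^{(n)}_{11}$ equal $\frac{1}{2^n}\mathbf 1$. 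The point is that by choosing rational coefficients $t$ with $t\cdot\frac{1}{2^n}$ prescribed and invoking rational linearity of $\Phi$, one can arrange $\Phi$ of a small element of $S(\cM)$ to be forced close to a dyadic multiple of a chosen spectral projection. Iterating this — at step $k$ correcting the accumulated error as in Lemma~\ref{constructiontk_x_k}, property~(4) — and using that the dyadic projections are $t_{\tau_\cN}$-dense among the projections of any $P(z\cN)$ in a type~II$_1$ algebra (since one can split any projection into halves by equivalence, by the dimension function $\Delta$), I would produce $\{x_n\}\subset S(\cM)$ with $x_n\stackrel{t_{\tau_\cM}}{\longrightarrow}0$ and $\Phi(x_n)$ converging in $t_{\tau_\cN}$ to an arbitrary prescribed $e\in P(z\cN)$, where $z$ is the central projection found in the first paragraph.

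The main obstacle, and the reason the statement is asserted only for \emph{some} non-zero central $z$ rather than all of $\cN$, is the passage from ``$\Phi(x_n)$ does not go to $0$'' to ``$\Phi(x_n)$ can be steered to a definite, nonzero \emph{limit}''. Controlling the size of $\Phi$ of a small element is easy; controlling its \emph{direction} (phase, support, central distribution) requires the averaging argument to pin it down up to a central multiplier, and it is precisely the possibly wild behaviour of $\Phi$ on the center $S(Z(\cN))\cong\Phi$-image of $S(Z(\cM))$ that one must quarantine — hence restricting to a central corner $z$ where $\Phi$ restricted to the center is well enough behaved (e.g.\ where the discontinuity ``charge'' is uniform). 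I expect this localization step to be the technically delicate heart of the argument; once it is done, the convergence $\Phi(x_n)\to e$ follows by summing a telescoping series exactly as in the proof of Theorem~\ref{ringisomorphism}, using continuity of multiplication in the measure topology and the triangle inequality for $\|\cdot\|_{S(\cN)}$.
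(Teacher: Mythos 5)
There is a genuine gap: your plan never supplies the mechanism that actually makes the set of achievable limits large enough to contain a prescribed projection. The paper's proof rests on the \emph{separating space}
$S(\Phi)=\{y\in S(\cN):\ \exists\,\{x_n\}\subset S(\cM),\ x_n\to 0,\ \Phi(x_n)\to y\}$,
which is nonzero by the closed graph theorem (both algebras are complete metrizable in the measure topology) and, crucially, is stable under right and left multiplication by arbitrary elements of $S(\cN)$: if $\Phi(x_n)\to y$ then $\Phi(x_n\Phi^{-1}(b))=\Phi(x_n)b\to yb$ while $x_n\Phi^{-1}(b)\to 0$ by continuity of multiplication. This one observation converts a single nonzero achievable limit $y$ into the projection $l(y)=y\,i(y)$, then (cutting to a subprojection $p\le l(y)$ with $\Delta(p)=\frac1m z$, $z=c(p)$, and conjugating by the $m$ partial isometries spreading $p$ over $z$) into the central projection $z$ itself, and finally into any $e\in P(z\cN)$ by one more multiplication by $\Phi^{-1}(e)$. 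Your proposal contains no substitute for this; you explicitly identify ``steering $\Phi(x_n)$ to a definite limit'' as the delicate heart and then leave it unresolved, deferring to a localization step you do not carry out.

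The averaging machinery you import from Section~3 is also the wrong tool here. The unitaries $u_n$ and the signed sums $\sum_i(-1)^{i-1}e^{(n)}_{ii}$ are designed to produce elements whose images have $F$-norm bounded \emph{below} (so as to derive a contradiction with symmetry of the norm); they do not produce sequences $\Phi(x_n)$ that \emph{converge} to a prescribed element, and the telescoping-series argument in the proof of Theorem~\ref{ringisomorphism} builds a single contradictory element rather than a convergent approximating sequence. Likewise, your first-paragraph localization to a corner $z_0$ where the discontinuity is ``uniformly visible'' is not how the central projection $z$ arises: in the paper it is the central support of a subprojection of $l(y)$ for $y\in S(\Phi)$, obtained via the dimension function, not from a uniformity argument on $\|z_0\Phi(a_n)\|$. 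Without the separating-space idea (or an equivalent), the proof does not go through.
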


\begin{proof}
Consider the separating space of $\Phi$ which is defined as follows
$$
S(\Phi) =\{y \in S(\cN):~ \exists \{x_n\}\subset S(\cM),~
\left\|x_n \right\|_{S(\cN)} \to 0, ~\Phi(x_n) \stackrel{t_{\tau_\cN}}{\longrightarrow}  y \}.
$$
Due to the assumption of the discontinuity of $\Phi$ and by  the closed graph theorem (see \cite[Page 79]{Yos}), we have $S(\Phi)\neq\{0\}.$

Take a non zero element  $y\in S(\Phi).$
Let $i(y)$ be the partial inverse of $y,$  thus $yi(y)=l(y).$
Then for the sequence $\left\{x_n^{(1)}:= x_n\Phi^{-1}(i(y))\right\}$ we have that
\begin{eqnarray*}
&x_n^{(1)}=x_n\Phi^{-1}(i(y)) &
\stackrel{t_{\tau_\cM}}{\longrightarrow} 0
\end{eqnarray*}
and
\begin{eqnarray*}
& \Phi\left(x_n^{(1)}\right)& = \Phi\left(x_n\Phi^{-1}(i(y))\right)  = \Phi\left(x_n\right)i(y)
\stackrel{t_{\tau_\cN}}{\longrightarrow} y i(y)=l(y).
\end{eqnarray*}
There are a projection $p\le l(y)$ and $m\in \mathbb{N}$ such that
$
\Delta(p)=\frac{1}{m}z,
$
where $z=c(p)$ is the central support projection of $p.$
Now  for the sequence \linebreak  $\left\{x_n^{(2)}:= x_n^{(1)}\Phi^{-1}(p)\right\}$ we have that
\begin{eqnarray*}
&x_n^{(2)}&
\stackrel{t_{\tau_\cM}}{\longrightarrow} 0
\end{eqnarray*}
and
\begin{eqnarray*}
& \Phi\left(x_n^{(2)}\right)&
\stackrel{t_{\tau_\cN}}{\longrightarrow} p.
\end{eqnarray*}

Since $
\Delta(p)=\frac{1}{m}z,
$
there are mutually orthogonal equivalent projections $p_1=p,  \ldots, p_m\in P(\cN)$ such that
$\sum\limits_{i=1}^m p_i=z.$ Take partial isometries $w_1, \ldots, w_m\in \cN$ such that
$w_i^\ast w_i=p_1$ and $w_i w_i^\ast=p_i$ for all $i=1, \ldots, m.$
Further, for the sequence $\left\{x_n^{(3)}:= \sum\limits_{i=1}^m \Phi^{-1}(w_i) x_n^{(2)}\Phi^{-1}(w_i^\ast)\right\}$ we have that
\begin{eqnarray*}
&x_n^{(3)}&
\stackrel{t_{\tau_\cM}}{\longrightarrow} 0
\end{eqnarray*}
and
\begin{eqnarray*}
& \Phi\left(x_n^{(3)}\right)&
\stackrel{t_{\tau_\cN}}{\longrightarrow} z.
\end{eqnarray*}

Finally, for $e\in P(z\cN)$, setting
$\left\{x_n^{(4)}:= x_n^{(3)}\Phi^{-1}(e)\right\}$, we obtain that
\begin{eqnarray*}
&x_n^{(4)}&
\stackrel{t_{\tau_\cM}}{\longrightarrow} 0
\end{eqnarray*}
and
\begin{eqnarray*}
& \Phi\left(x_n^{(4)}\right)&
\stackrel{t_{\tau_\cN}}{\longrightarrow} e.
\end{eqnarray*}
\end{proof}

Let $z\in \cN$ be the central projection from Lemma~\ref{dyadic}.

Below in this section  replacing, if necessary, the algebras $\cM,$  $\cN$ respectively by $\Phi^{-1}(z)\cM, \ z\cN$ and replacing $\Phi$ by $\Phi|_{\Phi^{-1}(z)\cM}$, without loss of generality we can assume that $z=\mathbf{1}.$  Also we assume that $\tau_\cM$ is a normalised trace, that is, $\tau(\mathbf{1})=1.$

\begin{lemma}\label{zkzk}
For each $t_k\in \mathbb{R}$  $(k\ge 0)$
there exists an element $a_k \in  S(\mathcal{M})$ such that  the element $x_k$ defined by
\begin{equation}\label{def_x_k}
x_k  = \sum\limits_{i=0}^{2^{k}-1} (-1)^i v_k^{i} a_k   v_k^{-i}
\end{equation}
 satisfies the following conditions
\begin{itemize}
    \item[(i)] $\left\|t_k x_k\right\|_{S(\cM)}\le \frac{1}{2^k};$
    \item[(ii)] $\displaystyle \left\|\Phi(x_k)\right\|_{S(\cN)}\ge \frac{1}{2}.$
\end{itemize}
\end{lemma}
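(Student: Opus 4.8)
The plan is to mimic the strategy of Lemma~\ref{rzkzk}, but using the conclusion of Lemma~\ref{dyadic} instead of the discontinuity of $\mu\mapsto\Phi(\mu\mathbf 1)$ at $0$. Recall that after the reduction preceding the lemma we have $z=\mathbf 1$, so Lemma~\ref{dyadic} applies with $e=\mathbf 1$: there is a sequence $\{w_m\}\subset S(\cM)$ with $w_m\xrightarrow{t_{\tau_\cM}}0$ and $\Phi(w_m)\xrightarrow{t_{\tau_\cN}}\mathbf 1$. Since the local measure topology on $S(\cN)$ coincides with the topology induced by $\|\cdot\|_{S(\cN)}$ (Remark~\ref{2.3}), we may restate this as $\|w_m\|_{S(\cM)}\to 0$ and $\|\Phi(w_m)\|_{S(\cN)}\to\|\mathbf 1\|_{S(\cN)}=1$; in particular $\|\Phi(w_m)\|_{S(\cN)}\ge\frac12$ for all large $m$.

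Now fix $k\ge 0$ and $t_k\in\mathbb R$. The candidate for $a_k$ is $a_k:=\Phi^{-1}(e^{(k)}_{1,1})\,w_m$ for a suitably large $m$ to be chosen, and $x_k$ is then defined by \eqref{def_x_k}. First I compute $\Phi(x_k)$. Using real-linearity and multiplicativity of $\Phi$, together with $\Phi(v_k)=u_k$ and $\Phi(\Phi^{-1}(e^{(k)}_{1,1})\,w_m)=e^{(k)}_{1,1}\Phi(w_m)$, one gets
\begin{align*}
\Phi(x_k)=\sum_{i=0}^{2^{k}-1}(-1)^i u_k^{i}e^{(k)}_{1,1}\Phi(w_m)u_k^{-i}.
\end{align*}
The key point is that $\Phi(w_m)$ is close to $\mathbf 1$, so $\Phi(x_k)$ is close to $\sum_{i=0}^{2^{k}-1}(-1)^i u_k^{i}e^{(k)}_{1,1}u_k^{-i}=\sum_{i=1}^{2^{k}}(-1)^{i-1}e^{(k)}_{i,i}$, which is a symmetry (unitary self-adjoint element) and hence has $\|\cdot\|_{S(\cN)}$-norm equal to $1$. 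More carefully, one should not even need $\Phi(w_m)\to\mathbf 1$; it suffices that $e^{(k)}_{1,1}\Phi(w_m)$ is eventually close enough to $e^{(k)}_{1,1}$ in $\|\cdot\|_{S(\cN)}$ (which follows since multiplication by the fixed bounded element $e^{(k)}_{1,1}$ is $\|\cdot\|_{S(\cN)}$-continuous), so that the triangle inequality and symmetricity of the $F$-norm give $\|\Phi(x_k)\|_{S(\cN)}\ge\frac12$ for all sufficiently large $m$. This establishes (ii).

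For (i), observe that $t_k x_k=\sum_{i=0}^{2^{k}-1}(-1)^i v_k^{i}(t_k a_k)v_k^{-i}$ and $t_k a_k=\Phi^{-1}(e^{(k)}_{1,1})\,(t_k w_m)$. Since $w_m\to 0$ in the measure topology of $S(\cM)$, so does $t_k w_m$ for each fixed $m$ — no wait, that is false for fixed $t_k$; rather, for fixed $t_k$ we use that $w_m\to 0$ implies $t_k w_m\to 0$ as $m\to\infty$, and multiplication by the fixed element $\Phi^{-1}(e^{(k)}_{1,1})$ and conjugation by the fixed unitaries $v_k^{i}$ are continuous, so $t_k x_k\to 0$ in $\|\cdot\|_{S(\cM)}$ as $m\to\infty$. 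Hence we may fix $m$ large enough that simultaneously $\|t_k x_k\|_{S(\cM)}\le\frac{1}{2^k}$ and $\|\Phi(x_k)\|_{S(\cN)}\ge\frac12$ hold, which is exactly (i) and (ii). The only genuinely delicate point is ensuring the two requirements can be met by a single choice of $m$; since the first becomes true as $m\to\infty$ and the second holds for all large $m$, a large enough $m$ works for both.
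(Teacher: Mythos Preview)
Your argument is correct and follows essentially the same route as the paper: the paper applies Lemma~\ref{dyadic} directly with $e=e^{(k)}_{1,1}$ to obtain a sequence $\{a_n\}$ with $a_n\to 0$ and $\Phi(a_n)\to e^{(k)}_{1,1}$, whereas you apply it with $e=\mathbf 1$ and then premultiply by $\Phi^{-1}(e^{(k)}_{1,1})$, which is only a cosmetic difference. One small correction of wording: the elements $v_k^{i}=\Phi^{-1}(u_k)^{i}$ are invertible but need not be unitaries (since $\Phi$ is not assumed to be a $\ast$-map); your convergence argument is unaffected, because it only uses that left and right multiplication by fixed elements of $S(\cM)$ is continuous in the measure topology.
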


\begin{proof}
Let $t_k\in \mathbb{R}$ be  fixed. By Lemma~\ref{dyadic}
there exists  a sequence $\left\{a_n\right\}$ in $S(\mathcal{M})$ such that
$$
t_k a_n\stackrel{||\cdot||_{S(\cM)}}{\longrightarrow}  0,~ \mbox{ as }n\to \infty
$$
and
$$
\Phi\left(a_n\right)\stackrel{||\cdot||_{S(\cN)}}{\longrightarrow}  e_{1,1}^{(k)},~ \mbox{ as }n\to \infty .
$$
Since the sequence $\{a_n\}$ in
$S(\cM)$  converges to $0$ by the norm $ \left\|\cdot\right\|_{S(\cM)} ,$  it follows that
$$
t_k a a_n b \stackrel{||\cdot||_{S(\cM)}}{\longrightarrow}  0
$$
for all fixed  $a, b\in S(\cM)$ (see \cite[Section 2.4]{DP2} and Remark \ref{2.3}), and therefore
\begin{eqnarray*}
&t_k\sum\limits_{i=0}^{2^{k}-1} (-1)^i v_k^{i} a_n   v_k^{-i} & \stackrel{||\cdot||_{S(\cM)}}{\longrightarrow} 0,   ~\mbox{ as }n\to \infty
\end{eqnarray*}
and
\begin{eqnarray*}
&\Phi\Big(\sum\limits_{i=0}^{2^{k}-1} (-1)^i v_k^{i} a_n   v_k^{-i}\Big) & =
\sum\limits_{i=0}^{2^{k}-1} (-1)^i u_k^{i} \Phi(a_n)   u_k^{-i}\stackrel{||\cdot||_{S(\cN)}}{\longrightarrow}
 \sum\limits_{i=0}^{2^{k}-1} (-1)^i u_k^{i} e_{1, 1}^{(k)}   u_k^{-i}\\
 &=& \sum\limits_{i=1}^{2^{k}} (-1)^{i-1}e_{i, i}^{(k)},~\mbox{ as }n\to \infty  .
\end{eqnarray*}
 Since $\tau_\cM$ is a normalised trace, by the definition of the $F$-norm $||\cdot||_{S(\cN)}$ (see Remark~\ref{2.3}), we obtain that $\left\|\sum\limits_{i=1}^{2^{k}} (-1)^{i-1}e_{i, i}^{(k)}\right\|_{S(\cN)}= \left\|\mathbf{1}\right\|_{S(\cM)}= 1.$ Therefore  there exists an integer  $n$ such that
$$
\Big|\Big|t_k\sum\limits_{i=0}^{2^{k}-1} (-1)^i v_k^{i} a_n   v_k^{-i} \Big|\Big|_{S(\cM)}\le \frac{1}{2^k}
$$
and
$$
\left\|\Phi\Big(\sum\limits_{i=0}^{2^{k}-1} (-1)^i v_k^{i} a_n   v_k^{-i} \Big)\right\|_{S(\cN)} \ge \frac{1}{2}
$$
as required.
\end{proof}

The following lemma  is a technical improvement of Lemma \ref{zkzk} and the last step to the proof of   Theorem \ref{realisomorphism}.

\begin{lemma}\label{mmmm}
There exist a  sequence $\left\{x_k\right\}_{k\ge0}$ in $S(\cM)$
 and an increasing sequence
 $\{t_k\}_{k\ge 0}$  in $\mathbb{R}$ with $ 1=t_0$ and $t_k \to \infty$  such that
    \begin{itemize}
        \item[(1)] each $x_k$ is defined as in \eqref{def_x_k};
        \item[(2)] $\displaystyle \left\|t_kx_k\right\|_{S(\cM)} \le  \frac{1}{2^k};$
        \item[(3)] $\displaystyle \left\|\Phi(x_k)\right\|_{S(\cN)}\ge \frac{1}{2};$
         \item[(4)] $\displaystyle \left\|\frac{1}{2t_k}\sum\limits_{i=0}^{k-1}u_k\Phi(t_i x_i)u_k^{-1}-\Phi(t_i x_i)\right\|_{S(\cN)} \le  \frac{1}{4}.$
    \end{itemize}
    \end{lemma}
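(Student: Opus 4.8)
The plan is to prove the lemma by induction on $k$, following exactly the pattern of the proof of Lemma~\ref{constructiontk_x_k}, but invoking Lemma~\ref{zkzk} (which itself rests on Lemma~\ref{dyadic}) in place of Lemma~\ref{rzkzk}. For the base case $k=0$ one has $u_0=\mathbf{1}$ and $t_0=1$, so property (4) is vacuous (empty sum), while properties (1)--(3) follow by applying Lemma~\ref{zkzk} with $t_0=1$: this yields $x_0$ of the form \eqref{def_x_k} with $\|t_0x_0\|_{S(\cM)}\le 1$ and $\|\Phi(x_0)\|_{S(\cN)}\ge\frac12$.

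For the inductive step I would assume $x_0,\dots,x_k$ and $t_0<\dots<t_k$ already chosen with (1)--(4). The key remark is that $\Phi(t_0x_0),\dots,\Phi(t_kx_k)$ are now fixed elements of $S(\cN)$, hence so is $\sum_{i=0}^{k}\bigl(u_{k+1}\Phi(t_ix_i)u_{k+1}^{-1}-\Phi(t_ix_i)\bigr)$; by the defining $F$-norm property $\lim_{\alpha\to0}\|\alpha y\|_{S(\cN)}=0$ this gives
\[
\lim_{t\to\infty}\Bigl\|\frac{1}{2t}\sum_{i=0}^{k}\bigl(u_{k+1}\Phi(t_ix_i)u_{k+1}^{-1}-\Phi(t_ix_i)\bigr)\Bigr\|_{S(\cN)}=0 .
\]
So I would first pick a real $t_{k+1}>t_k+k$ (which keeps $\{t_k\}$ strictly increasing and forces $t_k\to\infty$) large enough that the norm above, evaluated at $t=t_{k+1}$, is $\le\frac14$; this is property (4) at level $k+1$. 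Then, feeding this $t_{k+1}$ into Lemma~\ref{zkzk}, I obtain $x_{k+1}$ of the form \eqref{def_x_k} with $\|t_{k+1}x_{k+1}\|_{S(\cM)}\le\frac{1}{2^{k+1}}$ and $\|\Phi(x_{k+1})\|_{S(\cN)}\ge\frac12$, establishing (1)--(3) at level $k+1$ and closing the induction.

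I do not expect any real obstacle here: the substantive content has been isolated in Lemma~\ref{zkzk} (and ultimately in the separating-space/closed-graph argument of Lemma~\ref{dyadic}), so the present lemma only has to interlace the choices in the correct order --- $t_{k+1}$ first, using the already-determined tail $x_0,\dots,x_k$, then $x_{k+1}$ from Lemma~\ref{zkzk}. The one point to watch is the index shift in (4): passing from level $k$ to level $k+1$ the sum runs over $i=0,\dots,k$ and the conjugating unitary becomes $u_{k+1}$, which is why the displayed limit uses $u_{k+1}$. With $\{x_k\}_{k\ge0}$ and $\{t_k\}_{k\ge0}$ so constructed all four conditions hold for every $k$, and these sequences then feed into the proof of Theorem~\ref{realisomorphism} just as Lemma~\ref{constructiontk_x_k} fed into the proof of Theorem~\ref{ringisomorphism}.
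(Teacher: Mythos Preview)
Your proposal is correct and follows essentially the same approach as the paper's own proof: both argue by induction, use the $F$-norm axiom $\lim_{\alpha\to0}\|\alpha y\|=0$ to choose $t_{k+1}$ large enough for (4), and then invoke Lemma~\ref{zkzk} with that $t_{k+1}$ to obtain $x_{k+1}$ satisfying (1)--(3). Your write-up is in fact slightly more explicit about the order of choices and the index shift than the paper's version.
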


\begin{proof}
The proof is by induction on $k.$ For $k=0,$ since $u_0=\mathbf{1}$ and $t_0=1,$ it follows directly from Lemma~\ref{zkzk}.

Now assume that we have constructed $x_0, \ldots, x_k$ and $t_0, \ldots, t_k$ with properties (1)-(4).

Due to the definition of $F$-norms, we have
$$
\lim_{t \rightarrow \infty}\left\|\frac{1}{2t}\sum\limits_{i=0}^{k}u_{k+1}\Phi(t_i x_i)u_{k+1}^{-1}-\Phi(t_ix_i)\right\|_{S(\cN)}= 0.
$$
Hence,
we can take $t_{k+1}>t_k+n$ such that
$$
\left\|\frac{1}{2t_{k+1}}\sum\limits_{i=0}^{k}u_{k+1}\Phi(t_ix_i)u_{k+1}^{-1}-\Phi(t_ix_i)\right\|_{S(\cN)} \le  \frac{1}{4}.
$$

Further, for $t_{k+1},$  appealing to   Lemma~\ref{zkzk}, we obtain   $x_{k+1}$ as in \eqref{def_x_k} such that
$$
\displaystyle \left\|t_{k+1}x_{k+1}\right\|_{S(\cM)} \le  \frac{1}{2^{k+1}}
$$
and
$$
\displaystyle \left\|\Phi(x_{k+1})\right\|_{S(\cN)}\ge \frac{1}{2}.
$$
Now, the sequences $\{x_k\}_{k=0}^\infty$ and $\{t_k\}_{k=0}^\infty $
satisfy the properties claimed in the lemma.
\end{proof}

Now, we are ready to proceed to the proof of   Theorem \ref{realisomorphism} for a type II$_1$ von Neumann algebra  $\cM$ with a faithful normal finite trace $\tau$ ( $\tau(\mathbf{1})=1$).

\begin{proof}[Proof of Theorem \ref{realisomorphism}]

\

Let $x_0,\ldots, x_k,\ldots$ and $t_0,\ldots, t_k,\ldots$ from Lemma~\ref{mmmm}. Set
$$
x=\sum\limits_{k=1}^\infty y_k,
$$
where $y_k=t_kx_k.$ Lemma~\ref{mmmm} (2) guarantees that the above series is $\|\cdot\|_{S(\cM)}$-norm convergent in $S(\cM).$

As in the proof of Theorem~\ref{ringisomorphism} using Lemma \ref{lemma3.4} we have that
\begin{align*}
u_n \Phi(x) u_n^{-1}  -  \Phi (x)  = &  \Phi\left(v_n x v_n^{-1}-x\right)=
-2\Phi(y_n)  + \sum\limits_{k=1}^{n-1}  \left(u_n \Phi(y_k) u_n^{-1}- \Phi(y_k)\right).
\end{align*}
Recalling that $y_k=t_kx_k$, we have
\begin{align*}
\frac{1}{2t_n}\left(u_n \Phi(x) u_n^{-1}  -  \Phi(x)\right) & = -\Phi(x_n)+\frac{1}{2t_n}\sum\limits_{k=1}^{n-1}  \left(u_n \Phi(t_k x_k)u_n^{-1} - \Phi(t_k x_k)\right).
\end{align*}
By Lemma~\ref{mmmm}, we get
\begin{align*}
&\left\|-\Phi(x_n)\right\|_{S(\cN)} \ge \frac{1}{2};\\
&\Big\|\frac{1}{2t_n}\sum\limits_{k=1}^{n-1}  \left(u_n \Phi(t_k x_k)u_n^{-1} - \Phi(t_k x_k)\right)\Big\|_{S(\cN)}  \le \frac{1}{4}.
\end{align*}
Applying the  triangle inequality for $F$-norms,  we obtain that
\begin{align*}
\Big\|\frac{1}{2t_n}\Big(u_n \Phi(x) u_n^{-1}  -  \Phi(x)\Big)\Big\|_{S(\cN)} & \ge  \frac{1}{4}.
\end{align*}
On the other hand,  using the symmetricity of the norm  $\left\|\cdot\right\|_{S(\cN)}$ and the triangle inequality, we obtain that
\begin{align*}
\Big\|\frac{1}{2t_n}\Big(u_n \Phi(x) u_n^\ast  -  \Phi(x)\Big)\Big\|_{S(\cN)} & \le
\Big\|\frac{1}{2t_n} \Phi(x)\Big\|_{S(\cN)}+\Big\|\frac{1}{2t_n} \Phi(x)\Big\|_{S(\cN)}\to 0,
\end{align*}
as  $t_n\to \infty.$
From this contradiction we conclude  that $\Phi$ is continuous in the measure topology.
\end{proof}

\section{General form of ring isomorphisms}

In this Section we shall prove Theorem~\ref{latticeisomorphism} which is the main result of the paper.

Let   $\mathcal{M}$ and $\cN$ be  arbitrary type II$_1$ von Neumann algebras with  faithful normal finite traces $\tau_{\cM}$ and
$\tau_{\cN},$ respectively and let $\Phi:\cM \to \cN$ be a ring isomorphism, which  is a continuous real algebra isomorphism according to Theorems~\ref{ringisomorphism} and~\ref{realisomorphism}.

\begin{lemma}\label{orto}  Let   $p_\cN\in P(\cN)$ be a projection.  Suppose that  $x\in \cM$  and
$q_\cM=\mathbf{1}- s\left(\Phi^{-1}\left(\Phi(x)^\ast\right)\right) \vee
s\left(\Phi^{-1}(p_\cN)\right),$ where
$s(a)$ denotes the support of an element $a.$ Then
\begin{equation*}
(p_\cN+q_\cN)\Phi(x+y)^\ast \Phi(x+y)(p_\cN+q_\cN)= p_\cN\Phi(x)^\ast \Phi(x)p_\cN+q_\cN\Phi(y)^\ast \Phi(y)q_\cN
\end{equation*}
for all $y\in q_\cM\cM q_\cM,$ where $q_\cN\in \cN$ is an arbitrary projection with $q_\cN\le r(\Phi(q_\cM)).$
\end{lemma}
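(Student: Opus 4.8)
The plan is to expand $\Phi(x+y)^\ast\Phi(x+y)$ using only additivity of $\Phi$, and then show that after compression by $p_\cN+q_\cN$ every ``cross term'' disappears; all of this will follow from strong orthogonality relations forced by the very definition of $q_\cM$. First I would translate the hypotheses into multiplicative identities. Write $a=\Phi^{-1}(\Phi(x)^\ast)$ and $b=\Phi^{-1}(p_\cN)$. By construction $q_\cM$ is orthogonal to $s(a)$ and to $s(b)$; since $s(c)\,c=c=c\,s(c)$ for every $c\in S(\cM)$, this gives $aq_\cM=q_\cM a=0$ and $bq_\cM=q_\cM b=0$. Applying the ring isomorphism $\Phi$ (which is additive and multiplicative) yields $\Phi(x)^\ast\Phi(q_\cM)=\Phi(q_\cM)\Phi(x)^\ast=0$ and $p_\cN\Phi(q_\cM)=\Phi(q_\cM)p_\cN=0$.

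The key step is to prove $\Phi(x)\,r(\Phi(q_\cM))=0$, whence $\Phi(x)q_\cN=0$ for every $q_\cN\le r(\Phi(q_\cM))$. Taking the operator adjoint of $\Phi(q_\cM)\Phi(x)^\ast=0$ gives $\Phi(x)\Phi(q_\cM)^\ast=0$; multiplying on the right by the partial inverse $i(\Phi(q_\cM)^\ast)$ and using $c\,i(c)=l(c)$ with $c=\Phi(q_\cM)^\ast$ produces $\Phi(x)\,l(\Phi(q_\cM)^\ast)=0$. Since $l(\Phi(q_\cM)^\ast)=s(|\Phi(q_\cM)|)=r(\Phi(q_\cM))$, this is exactly $\Phi(x)\,r(\Phi(q_\cM))=0$, so right-multiplying by $q_\cN$ gives $\Phi(x)q_\cN=0$ and hence $q_\cN\Phi(x)^\ast=0$. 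For the remaining relations, note that $y\in q_\cM\cM q_\cM$ means $y=q_\cM y q_\cM$, so $ay=aq_\cM yq_\cM=0$, $ya=q_\cM yq_\cM a=0$ and $yb=q_\cM yq_\cM b=0$; applying $\Phi$ gives $\Phi(x)^\ast\Phi(y)=0$, $\Phi(y)\Phi(x)^\ast=0$ and $\Phi(y)p_\cN=0$, and adjoints give $\Phi(y)^\ast\Phi(x)=0$ and $p_\cN\Phi(y)^\ast=0$.

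Then I would assemble: by additivity
\[
\Phi(x+y)^\ast\Phi(x+y)=\Phi(x)^\ast\Phi(x)+\Phi(x)^\ast\Phi(y)+\Phi(y)^\ast\Phi(x)+\Phi(y)^\ast\Phi(y)=\Phi(x)^\ast\Phi(x)+\Phi(y)^\ast\Phi(y).
\]
Compressing by $p_\cN+q_\cN$ and using $\Phi(x)q_\cN=q_\cN\Phi(x)^\ast=0$ collapses the first summand to $p_\cN\Phi(x)^\ast\Phi(x)p_\cN$, while $\Phi(y)p_\cN=p_\cN\Phi(y)^\ast=0$ collapses the second to $q_\cN\Phi(y)^\ast\Phi(y)q_\cN$; adding the two gives the asserted identity.

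The only non-routine point is the key step, namely passing from $\Phi(x)\Phi(q_\cM)^\ast=0$ to the annihilation of the \emph{whole} right support $r(\Phi(q_\cM))$: this uses the regularity of $S(\cN)$ (existence of partial inverses with $c\,i(c)=l(c)$) together with $l(c^\ast)=s(|c|)=r(c)$. One has to be careful here precisely because $\Phi(q_\cM)$ is merely an idempotent, not a projection, so a naive manipulation with $\Phi(q_\cM)$ in place of its supports is not available. Everything else is bookkeeping with the additivity and multiplicativity of $\Phi$ and with operator adjoints, which are intrinsic to $S(\cN)$ and do not require $\Phi$ to respect the involution. (As a consistency check, $q_\cN\le r(\Phi(q_\cM))$ together with $\Phi(q_\cM)p_\cN=0$ forces $r(\Phi(q_\cM))p_\cN=0$, so in fact $p_\cN\perp q_\cN$ and $p_\cN+q_\cN$ is a projection, although this is not needed for the computation above.)
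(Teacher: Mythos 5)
Your proposal is correct and follows essentially the same route as the paper's own proof: the same orthogonality relations derived from $q_\cM\perp s(\Phi^{-1}(\Phi(x)^\ast))\vee s(\Phi^{-1}(p_\cN))$, the same passage from $\Phi(x)\Phi(q_\cM)^\ast=0$ to $\Phi(x)\,r(\Phi(q_\cM))=0$ via the partial inverse and $l(c^\ast)=r(c)$, and the same final expansion with vanishing cross terms. Nothing to add.
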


\begin{proof} Let $y\in q_\cM \cM q_\cM$ be an arbitrary element.

Firstly, from
$s\left(\Phi^{-1}\left(\Phi(x)^\ast\right)\right)y=0,$
it follows that $\Phi^{-1}\left(\Phi(x)^\ast\right)y=0.$
Thus
$$
\Phi(x)^\ast \Phi(y)=\Phi\Big(\Phi^{-1}\left(\Phi(x)^\ast\right)y\Big)=\Phi(0)=0
$$
and
$$
\Phi(y)^\ast \Phi(x)=\Big(\Phi(x)^\ast\Phi(y)\Big)^\ast=0.
$$
Hence
\begin{align*}
\Phi(x+y)^\ast \Phi(x+y) =  \Phi(x)^\ast \Phi(x) + \Phi(y)^\ast \Phi(y).
\end{align*}

Secondly, since $q_\cM s\left(\Phi^{-1}(p_\cN)\right)=0,$ it follows that $y\Phi^{-1}(p_\cN)=0,$ and therefore
$$
\Phi(y)p_\cN=0 = p_\cN\Phi(y)^\ast.
$$

Finally, from
$q_\cM s\left(\Phi^{-1}\left(\Phi(x)^\ast\right)\right)=0,$
it follows that $q_\cM \Phi^{-1}\left(\Phi(x)^\ast\right)=0.$
Thus
$
\Phi(q_\cM)\Phi(x)^\ast  =0,
$
and therefore
$$
\Phi(x) \Phi(q_\cM)^\ast=0.
$$
Multiplying the partial inverse of
$\Phi(q_\cM)^\ast$ to the right of the
last equality we obtain that
$$
\Phi(x) r(\Phi(q_\cM))=0,
$$
in particular,
$$
\Phi(x) q_\cN=0,
$$
because $q_\cN\le r(\Phi(q_\cM)).$

So,
\begin{align*}
\Phi(x)^\ast \Phi(y) =  \Phi(y)^\ast \Phi(x) =0,\\
\Phi(x) q_\cN =  q_\cN \Phi(x)^\ast=0,\\
\Phi(y) p_\cN =  p_\cN \Phi(y)^\ast= 0.
\end{align*}
 Taking into account these equalities we get
\begin{eqnarray*}
 (p_\cN+q_\cN)\Phi(x+y)^\ast \Phi(x+y)(p_\cN+q_\cN) &=&
 (p_\cN+q_\cN)\Big(\Phi(x)^\ast\Phi(x)+\Phi(y)^\ast \Phi(y)\Big) %\times \\  &\times &
 (p_\cN+q_\cN)\\
& = &   p_\cN\Phi(x)^\ast\Phi(x)p_\cN+q_\cN\Phi(y)^\ast \Phi(y)q_\cN
\\
& + &  p_\cN\Phi(y)^\ast\Phi(y)p_\cN+q_\cN\Phi(x)^\ast \Phi(x)q_\cN \\
& + &   p_\cN\Phi(x)^\ast\Phi(x)q_\cN+q_\cN\Phi(y)^\ast \Phi(y)p_\cN
  \\
& + &   p_\cN\Phi(y)^\ast\Phi(y)q_\cN+q_\cN\Phi(x)^\ast \Phi(x)p_\cN\\
& = &   p_\cN\Phi(x)^\ast\Phi(x)p_\cN+q_\cN\Phi(y)^\ast \Phi(y)q_\cN.
  \end{eqnarray*}
The proof is complete. \end{proof}

In the next Lemma we shall  use the following order on $S(\cM).$  For $x, y\in S(\cM)$ set
$$
x\prec y \Longleftrightarrow s(x)\le s(y),\, y=x+z,\, s(x)s(z)=0.
$$
Direct computations show that $\prec$ is a partial order on $S(\cM).$

The
following lemma is  one of the key steps towards the  proof of the main
result.

\begin{lemma}\label{boun}  There exists a sequence of projections
$\left\{q_n\right\}$ in  $\cM$ with \linebreak $\tau_\cM\left(\mathbf{1}-q_n\right)\rightarrow 0$ such that
$\Phi$ maps $q_n\cM q_n$ into $\cN$.
\end{lemma}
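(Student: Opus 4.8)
The goal is to find projections $q_n$ with $\tau_\cM(\mathbf 1-q_n)\to 0$ such that $\Phi$ maps each corner $q_n\cM q_n$ into the bounded algebra $\cN$ (not just into $S(\cN)$). The natural strategy is a Baire-category / uniform-boundedness argument combined with a cutting-down trick using Lemma~\ref{orto}. First I would fix $n$ and, for each $m\in\mathbb N$, consider the set
\[
F_m=\{e\in P(\cM):\ \|\Phi(exe)\|_{S(\cN)}\le m\ \text{for all }x\in\cM\text{ with }\|x\|_\cM\le 1\}.
\]
Alternatively, since $\Phi$ is continuous from $(S(\cM),t_{\tau_\cM})$ to $(S(\cN),t_{\tau_\cN})$ by Theorem~\ref{realisomorphism}, and the unit ball of $\cM$ is bounded in $S(\cM)$, the image $\Phi(\{x\in\cM:\|x\|_\cM\le 1\})$ is a bounded subset of $S(\cN)$ in the measure topology. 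The measure topology on $S(\cN)$ is metrizable (Remark~\ref{2.3}) and $S(\cN)$ with the $F$-norm is a complete metric group, so one can run a Baire-category argument: write $\cN\cap S(\cN)$ — more precisely, the measurable operators of norm $\le m$ cut down by projections of large trace — as a countable union of closed sets whose union exhausts the relevant set, and extract one with nonempty interior.

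The cleaner route, and the one I would actually pursue, goes as follows. For $e\in P(\cN)$ let $G_m=\{e\in P(\cN): \|e\Phi(x)e\|_\cN\le m\ \forall x\in\cM,\ \|x\|_\cM\le 1\}$ after passing through $\Phi^{-1}$, i.e.\ work on the $\cN$-side where boundedness means membership in $\cN$. By a Baire-type argument on the projection lattice (using that $P(\cN)$, with a suitable metric coming from $\tau_\cN$ and the strong topology, is complete, or using a measure-theoretic exhaustion), there is some $m_0$ and a projection $p_0\in P(\cN)$ with $\tau_\cN(p_0)>0$ such that $p_0$ ``absorbs'' unboundedness: precisely, every projection below $p_0$ of sufficiently large trace lies in $G_{m_0}$, equivalently $p_\cN\Phi(\cM)p_\cN\subset\cN$ with a uniform bound for $p_\cN$ below $p_0$. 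Then, using the central-projection reduction already set up in Section~2.5 together with the homogeneity supplied by Lemma~\ref{lem_matrix_units} (translating a good projection around by the matrix units $e^{(k)}_{ij}$, which are unitaries in $\cN$ and whose $\Phi^{-1}$-images are units-of-partial-isometry type elements), one spreads this boundedness over a projection of trace arbitrarily close to $1$. Taking $q_n=\Phi^{-1}$ of such a projection (or directly a projection in $\cM$) of trace $\ge 1-\tfrac1n$ yields the sequence.

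Here is where Lemma~\ref{orto} enters and why it was proved: it shows that the quadratic form $p\mapsto p\,\Phi(x+y)^\ast\Phi(x+y)\,p$ splits orthogonally when $y$ lives in a corner orthogonal (in the support sense) to the data attached to $x$ and $p_\cN$. This orthogonal splitting is exactly what lets one run the boundedness argument one corner at a time and then \emph{add up} finitely (or countably, along a $\tau$-summable family of orthogonal projections) many corners on each of which $\Phi$ is bounded, without the norms interfering. Concretely, one builds $q_n$ as a finite orthogonal sum $\sum_j f_j$ of projections on each of which the restriction of $\Phi$ is bounded, controls $\|\Phi(q_nxq_n)\|_\cN$ via the split form, and checks $\tau_\cM(\mathbf 1-q_n)\to 0$ by making the defect at stage $n$ have trace $\le 1/n$.

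**Main obstacle.** The crux is the Baire-category step: one must verify that the sets witnessing ``$\Phi$ is bounded on this corner by $m$'' are closed in an appropriate \emph{complete} metrizable topology on a piece of the projection lattice (or on a ball of $S(\cN)$), so that one of them has nonempty interior. The continuity of $\Phi$ in the measure topology (Theorem~\ref{realisomorphism}) and the $F$-norm description of that topology (Remark~\ref{2.3}) give the needed completeness and the closedness of sublevel sets, but matching ``bounded in measure'' against ``bounded in operator norm'' requires care — it is precisely the point where one exploits that in a finite von Neumann algebra the measure topology and the norm topology agree on norm-bounded sets, so that a set bounded in measure and contained in $\cN$ is automatically norm-bounded on a corner of large trace. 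Making that last implication quantitative, uniformly over the corner, is the part I expect to be delicate; everything after it (the transport via matrix units and the orthogonal assembly via Lemma~\ref{orto}) is routine.
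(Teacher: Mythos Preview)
Your proposal has a genuine gap at precisely the point you flag as the ``main obstacle,'' and the surrounding heuristics do not bridge it.

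First, the assertion that ``in a finite von Neumann algebra the measure topology and the norm topology agree on norm-bounded sets'' is false: already in $L^\infty[0,1]$ the characteristic functions $\chi_{[0,1/n]}$ all have norm $1$ and converge to $0$ in measure but not in norm. So the passage from ``bounded in measure'' to ``bounded in operator norm on a large corner'' cannot be obtained this way. Second, your Baire-category setup is never made precise: you do not specify a complete metric space on which to run the argument, nor do you verify that the candidate sets $G_m$ are closed there, nor that their union is all of the space. Without this, there is no Baire step to invoke. Third, the ``transport via matrix units'' is not routine here: conjugation by $u_k$ lives on the $\cN$-side, but the corners $q_n\cM q_n$ you must produce live on the $\cM$-side, and $\Phi^{-1}(u_k)$ is neither a partial isometry nor bounded in general, so it is unclear how conjugation by it preserves operator-norm boundedness.

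The paper takes a completely different route, with no category argument. For each $n$ it considers the set $\mathcal F_n$ of pairs $(x,p_\cN)\in\cM\times P(\cN)$ with $\|x\|_\cM\le 1$, $\tau_\cM(s(x))\le 2\,\tau_\cM\bigl(l(\Phi^{-1}(p_\cN))\bigr)$, and $p_\cN\Phi(x)^\ast\Phi(x)p_\cN\ge n\,p_\cN$; this set records the possible witnesses to ``$\Phi$ is at least $\sqrt{n}$-large on a controlled corner.'' A direct Zorn's-lemma argument (using continuity of $\Phi$ and $\Phi^{-1}$ to pass to suprema of chains) produces a maximal pair $(x_n,p_n)$. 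One then sets
\[
q_n=\mathbf{1}-s\bigl(\Phi^{-1}(\Phi(x_n)^\ast)\bigr)\vee s\bigl(\Phi^{-1}(p_n)\bigr)\vee s(x_n),
\]
and Lemma~\ref{orto} is used exactly once, to show that if some $x\in q_n\cM q_n$ with $\|x\|_\cM\le 1$ had a spectral piece of $\Phi(x)^\ast\Phi(x)$ above level $n$, one could enlarge $(x_n,p_n)$ inside $\mathcal F_n$, contradicting maximality. Hence $\Phi(x)^\ast\Phi(x)\le n\,\overline{q_n}$ on the corner, so $\Phi(q_n\cM q_n)\subset\cN$. Finally, continuity of $\Phi$ and $\Phi^{-1}$ forces $p_n\to 0$ (via $\tfrac{1}{\sqrt n}x_n\to 0$), and Lemma~\ref{limitofrang} then gives $\tau_\cM(\mathbf 1-q_n)\to 0$. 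Your orthogonal-splitting intuition about Lemma~\ref{orto} is correct, but it is used to \emph{contradict maximality of a single witness}, not to assemble many small good corners into a large one.
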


\begin{proof}
For every $n\in \mathbb{N}$ denote by $\mathcal{F}_n$ the set of all pairs $(x, p_\cN)\in \cM\times P(\cN)$ such that
\begin{itemize}
\item $||x||_\cM\le 1;$
\item $\tau_\cM\left(s(x)\right)\le 2\tau_\cM(l\left(\Phi^{-1}(p_\cN)\right));$
\item $p_\cN\Phi(x)^\ast \Phi(x)p_\cN\ge n p_\cN.$
\end{itemize}

Recall that $\|\cdot\|_\cM$ is the operator norm on $\cM.$

Note that $(0,0)\in \mathcal{F}_n,$ so $\mathcal{F}_n$ is not empty.
Let us show that the set $\mathcal{F}_n$ has a maximal element
with respect to the order $\leq,$ where
$$
(x_1, p_1)\le (x_2, p_2)\Leftrightarrow x_1\prec x_2,\, p_1\le p_2.
$$

Let $\{(x_\alpha, p_\alpha)\}\subset \mathcal{F}_n$ be a totally ordered net.
Since $\{s(x_\alpha)\}$ is an increasing net of projections from $\cM,$
it follows that $s(x_\alpha)$ converges in the strong operator topology to some projection  $s$ from $\cM.$
Then for $\alpha>\beta$ we have that
$$
\tau_\cM(s(x_\alpha-x_\beta))=\tau_\cM(s(x_\alpha)-s(x_\beta))\le \tau_\cM(s-s(x_\beta))\to 0.
$$
Thus  the  net $\{x_\alpha\}$ converges to some element $x$ from
the unit ball of $\cM$ in the measure topology,
moreover, $x=\sup\limits_\alpha x_\alpha,$ (here the least upper bound is taken with
respect to the above partial order $\prec$) and $s(x)=\sup\limits_\alpha s(x_\alpha)=s.$
Since $\{p_\alpha\}$ is also an increasing net of projections from
$\cN,$ then $p_\alpha \uparrow p,$ where $p\in P(\cN),$ in
particular, $p_\alpha\stackrel{t_{\tau_\cN}}\longrightarrow p.$

Now we check  that  $(x, p) \in \mathcal{F}_n.$
From
$x_\alpha\stackrel{t_{\tau_\cM}}\longrightarrow x,$ by  continuity of $\Phi$ we have that
$\Phi(x_\alpha)\stackrel{t_{\tau_\cN}}\longrightarrow \Phi(x).$
Let $\beta$ be a fixed index and take an arbitrary index $\alpha\ge \beta.$ Taking into
account that $p_\alpha\Phi(x_\alpha)^\ast \Phi(x_\alpha)p_\alpha\ge
n p_\alpha$ and $p_\alpha\ge p_\beta,$ we obtain that
$p_\beta\Phi(x_\alpha)^\ast \Phi(x_\alpha)p_\beta\ge
n p_\beta.$ Since $\Phi(x_\alpha)\stackrel{t_{\tau_\cN}}\longrightarrow \Phi(x),$ it follows that
$p_\beta\Phi(x)^{\ast}\Phi(x)p_\beta\geq n
p_\beta.$ From $p_\beta\uparrow p,$ we have that
$$
p\Phi(x)^{\ast}\Phi(x)p\geq n
p.
$$
Finally, since $\Phi^{-1}$ is continuous, $\tau_\cM(s(x_\alpha))\le 2
\tau_\cM(l\left(\Phi^{-1}(p_\alpha)\right)),$ $s(x_\alpha)\uparrow
s(x)$ and  $p_\alpha\uparrow p,$ it follows that
$\tau_\cM(s(x))\le 2 \tau_\cM(l\left(\Phi^{-1}(p)\right)).$ This
means that $(x,p)\in \mathcal{F}_n.$ Therefore, any totally
ordered net  in $\mathcal{F}_n$ has the least upper bound. By
Zorn's Lemma $\mathcal{F}_n$ has a maximal element, say $(x_n,
p_n).$

Put
$$
q_n=\mathbf{1}- s\left(\Phi^{-1}\left(\Phi(x_n)^\ast\right)\right) \vee
s\left(\Phi^{-1}(p_n)\right)\vee
s\left(x_n\right).
$$

Let us prove that
$$
\Phi(x)^\ast \Phi(x)\leq n \overline{q_n}
$$
for all $x\in q_n\cM q_n$ with $||x||_\cM\le 1,$ where $\overline{q_n}=r\left(\Phi\left(q_n\right)\right).$

The case  $q_n=0$  is trivial.

Let us consider the case $q_n\neq 0.$ Take a non zero element $x\in
q_n \cM q_n$ such that  $||x||_\cM\le 1.$ Note that $\Phi(x)^\ast\Phi(x)\in S(\overline{q_n}\cN \overline{q_n}),$ because
$\overline{q_n}=r\left(\Phi\left(q_n\right)\right)$ and $x\in q_n\cM q_n.$
Let $\Phi(x)^\ast\Phi(x)=\int\limits_{0}^{+\infty}\lambda \, d\,
e_{\lambda}$ be the spectral resolution of $\Phi(x)^\ast\Phi(x)$ in $S(\overline{q_n}\cN \overline{q_n}).$
Assume that $e=\overline{q_n}-e_{n}\neq 0,$ that is,
$$
e \Phi(x)^\ast\Phi(x) e\geq n e.
$$
Take  the projection
$$
f_n=l(q_n\Phi^{-1}(e))\le q_n.
$$
By the definition of the left support we have $0=(q_n-f_n)q_n\Phi^{-1}(e)=(q_n-f_n)\Phi^{-1}(e).$ Thus
\begin{eqnarray*}
0 & = &\Phi((q_n-f_n)\Phi^{-1}(e))=\Phi\left(q_n-f_n\right)e.
\end{eqnarray*}
Denote
$$
y=xf_n.
$$
Then $y\in q_n \cM q_n$ and $||y||_\cM\le 1.$ From
$$
\Phi(x(q_n-f_n))e=\Phi(x)\Phi(q_n-f_n)e=0,
$$ it follows that
$
\Phi(y)e=\Phi(x)e.
$
Thus
$$
e \Phi(y)^\ast\Phi(y) e=e \Phi(x)^\ast\Phi(x) e \geq n e.
$$
Further, $\tau_\cM(s(y))\le 2 \tau_\cM(l(\Phi^{-1}e)),$ because
    \begin{align*}
    \tau_\cM(s(y)) & =\tau_\cM(l(y)\vee r(y)) \le \tau_\cM(l(y))+\tau_\cM(r(y))=[l(y)\sim r(y)]\\
    & =2\tau_\cM(r(y))=2\tau_\cM(r(xf_n)) \le 2\tau_\cM(f_n)=2\tau_\cM(l(q_n\Phi^{-1}(e)))\\
    &=2\tau_\cM(r(q_n\Phi^{-1}(e)))\le 2\tau_\cM(r(\Phi^{-1}(e)))=2\tau_\cM(l(\Phi^{-1}(e))).
    \end{align*}

Applying  Lemma~\ref{orto} we have
\begin{eqnarray*}
 (p_n+e)\Phi(x_n+y)^\ast \Phi(x_n+y)(p_n+e) & = & p_n\Phi(x_n)^\ast\Phi(x_n)p_n+e\Phi(y)^\ast \Phi(y)e\\
 &\ge& n(p_n+e).
\end{eqnarray*}
Further, $||x_n+y||\le 1,$ because $||x_n||_\cM, ||y||_\cM\le 1,$ $s(y)s(x_n)=0.$ In particular, $x_n \prec x_n+y.$

Let us show that $\tau_\cM(s(x_n+y))\le 2
\tau_\cM(l(\Phi^{-1}(p_n+e))).$ Indeed, since $e\le
\overline{q_n}=r\left(\Phi(q_n)\right)=l\left(\Phi(q_n)^\ast\right),$
it follows that $e\Phi(q_n)=e.$ Further,
$\Phi^{-1}(e)s\left(\Phi^{-1}(p_n)\right)=0,$ because by the
choice  of $q_n$ we have that $q_n\le
\mathbf{1}-s\left(\Phi^{-1}(p_n)\right).$ Thus
$r\left(\Phi^{-1}(e)\right)r\left(\Phi^{-1}(p_n)\right)=0,$ and
therefore
$r\left(\Phi^{-1}(e+p_n)\right)=r\left(\Phi^{-1}(e)\right)+r\left(\Phi^{-1}(p_n)\right).$
From the last equality we obtain that
\begin{align*}
\tau_\cM(s(x_n+y))& \le  \tau_\cM(s(x_n))+\tau_\cM(s(y))\le 2\tau_\cM(l(\Phi^{-1}(p_n)))+2 \tau_\cM(l(\Phi^{-1}(e)))\\
& = 2\tau_\cM(r(\Phi^{-1}(p_n)))+ 2\tau_\cM(r(\Phi^{-1}(e)))\\
&= 2\tau_\cM(r(\Phi^{-1}(p_n+e)))=2\tau_\cM(l(\Phi^{-1}(p_n+e))).
\end{align*}

So, we have that $(x_n+y, p_n+e)\in \mathcal{F}_n$ and $(x_n,
p_n)\le (x_n+y, p_n+e).$ This contradicts  maximality of
$(x_n,p_n).$ This contradiction implies that
$\overline{q_n}-e_{n}=e=0.$ This means that
$$
\Phi(x)^\ast\Phi(x)\le n\overline{q_n}
$$
for all $x\in q_n\cM q_n$ with $||x||_\cM\le 1.$ In particular, $\Phi$ maps $q_n \cM q_n$ into $\cN.$

Let us show that $\tau_\cM\left(\mathbf{1}-q_n\right)\rightarrow 0.$
Consider the sequence $\left\{\frac{1}{\sqrt{n}}x_n\right\},$ where $(x_n, p_n)$ is a maximal element of $\mathcal{F}_n.$ From
$\frac{1}{\sqrt{n}}x_n\to 0,$ it follows that $\frac{1}{\sqrt{n}}\Phi(x_n) \to 0.$ Thus
$
p_n\to 0,$
because $\frac{1}{n}p_n\Phi(x_n)^\ast \Phi(x_n)p_n\ge p_n.$
Further, the continuity of $\Phi^{-1}$ implies that $\Phi^{-1}(p_n)\to 0$ in the measure topology. Since  each $\Phi^{-1}(p_n),\, n\in \mathbb{N},$ is an idempotent, by Lemma~\ref{limitofrang}, we obtain that
$l\left(\Phi^{-1}(p_n)\right)\to 0$ and $s\left(\Phi^{-1}(p_n)\right)\to 0$ in the measure topology.
Further, the inequality
$\tau_\cM\left(s(x_n)\right)\le 2\tau_\cM\left(l(\Phi^{-1}p_n)\right)$ implies that
$s(x_n)\to 0$ in the measure topology. Thus
$x_n\to 0$ in the measure topology.
Since
$$
s\left(\Phi^{-1}\left(\Phi(x_n)^\ast\right)\right)=
s\left(\Phi^{-1}\left(\Phi(s(x_n))^\ast\right)\Phi^{-1}\left(\Phi(x_n)^\ast\right)
\right)\le s\left(\Phi^{-1}\left(\Phi(s(x_n))^\ast\right)\right)
$$
and $\Phi^{-1}\left(\Phi(s(x_n))^\ast\right)$ is an idempotent, it follows that
$
s\left(\Phi^{-1}\left(\Phi(x_n)^\ast\right)\right)\to 0.
$
Hence,
$$
s\left(\Phi^{-1}\left(\Phi(x_n)^\ast\right)\right) \vee
s\left(\Phi^{-1}(p_n)\right)\vee
s\left(x_n\right)\to 0,
$$
and hence
$\tau_\cM\left(\mathbf{1}-q_n\right)\rightarrow 0.$
The proof is complete.
\end{proof}

In the next two Lemmas we assume that $\Phi$ is a real algebra isomorphism of $S(\cN)$ onto itself (i.e. real automorphism).

\begin{lemma}\label{aaa}
Let $e\in \cN$ be a projection with $e\sim \mathbf{1}-e=f$ and let $\Phi$ be a real automorphism of $S(\cN)$ such that
$\Phi$ acts on $e\cN e$ identically. Then there is an invertible element $a\in S(\cN)$ such that
$$
\Phi(x)=axa^{-1}
$$
for all $x\in S(\cN).$
\end{lemma}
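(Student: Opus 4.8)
The plan is to recover $a$ from a $2\times 2$ matrix decomposition of $S(\cN)$ built on the pair $e$, $f=\mathbf 1-e$, using crucially that $\Phi$ fixes \emph{both} $e$ and $f$. First I would fix a partial isometry $w\in\cN$ with $w^\ast w=e$ and $ww^\ast=f$ and form the system of matrix units $e_{11}=e$, $e_{22}=f$, $e_{12}=w^\ast$, $e_{21}=w$, so that $e_{ij}e_{kl}=\delta_{jk}e_{il}$ and $e_{11}+e_{22}=\mathbf 1$; every $x\in S(\cN)$ then has the unique decomposition $x=\sum_{k,l=1}^{2}e_{k1}x_{kl}e_{1l}$ with $x_{kl}=e_{1k}xe_{l1}\in eS(\cN)e=S(e\cN e)$. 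Put $a_{ij}=\Phi(e_{ij})$. Since $\Phi(\mathbf 1)=\mathbf 1$ (the unit goes to the unit) and $\Phi$ acts identically on $e\cN e$, we have $a_{11}=\Phi(e)=e=e_{11}$ and $a_{22}=\Phi(\mathbf 1-e)=\mathbf 1-e=f=e_{22}$, while $a_{ij}a_{kl}=\delta_{jk}a_{il}$ and $a_{11}+a_{22}=\mathbf 1$; thus $\{a_{ij}\}$ is an algebraic (not necessarily selfadjoint) system of matrix units with the same diagonal as $\{e_{ij}\}$.

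Next I would set $a=\sum_{i=1}^{2}a_{i1}e_{1i}$ and $b=\sum_{i=1}^{2}e_{i1}a_{1i}$ in $S(\cN)$. Using $e_{1i}e_{j1}=\delta_{ij}e_{11}$ together with $a_{11}=e_{11}$ and $\sum_i a_{ii}=\mathbf 1$, one gets $ab=\sum_i a_{i1}e_{11}a_{1i}=\sum_i a_{i1}a_{11}a_{1i}=\sum_i a_{ii}=\mathbf 1$ and $ba=\sum_i e_{i1}a_{11}e_{1i}=\sum_i e_{i1}e_{11}e_{1i}=\sum_i e_{ii}=\mathbf 1$, so $a$ is invertible with $a^{-1}=b$. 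Then for all $k,l$,
$$a\,e_{kl}\,a^{-1}=\Big(\sum_i a_{i1}e_{1i}\Big)e_{kl}\Big(\sum_j e_{j1}a_{1j}\Big)=a_{k1}e_{11}a_{1l}=a_{k1}a_{11}a_{1l}=a_{kl}=\Phi(e_{kl}).$$

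It remains to compare conjugation by $a$ with $\Phi$ on the corner $S(e\cN e)$. Here I would note that $a=a_{11}e_{11}+a_{21}e_{12}=e_{11}+c$ where $c=a_{21}e_{12}\in fS(\cN)f$, whence $c=fcf$, $ec=ce=0$, and therefore $ay=y=ya$ for every $y\in S(e\cN e)=eS(\cN)e$; that is, $a$ commutes with $S(e\cN e)$. On the other hand, $\Phi$ is continuous in the measure topology (Theorem~\ref{realisomorphism}) and acts identically on $e\cN e$, which is dense in $S(e\cN e)$ for that topology; hence $\Phi$ acts identically on all of $S(e\cN e)$. Thus $a\,y\,a^{-1}=y=\Phi(y)$ for $y\in S(e\cN e)$, and for an arbitrary $x=\sum_{k,l}e_{k1}x_{kl}e_{1l}\in S(\cN)$ the multiplicativity of $\Phi$ together with the previous displays gives
$$a\,x\,a^{-1}=\sum_{k,l}(ae_{k1}a^{-1})(ax_{kl}a^{-1})(ae_{1l}a^{-1})=\sum_{k,l}\Phi(e_{k1})\Phi(x_{kl})\Phi(e_{1l})=\Phi(x),$$
completing the proof.

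The step I expect to be the real obstacle is not any individual computation but the two structural observations underpinning them: that $\Phi$ fixes the complementary projection $f=\mathbf 1-e$ (forcing $a_{11}=e_{11}$ and $a_{22}=e_{22}$, hence the clean shape $a=e+(\text{element of }fS(\cN)f)$, which is exactly what makes conjugation by $a$ restore the identity on $S(e\cN e)$), and that the hypothesis ``$\Phi=\mathrm{id}$ on $e\cN e$'' propagates to ``$\Phi=\mathrm{id}$ on $S(e\cN e)$'', for which continuity of $\Phi$ is indispensable --- without it, Kusraev-type discontinuous automorphisms of the abelian part would obstruct the conclusion.
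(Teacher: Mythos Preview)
Your proof is correct and is essentially the same as the paper's: unwinding your definitions gives $a=a_{11}e_{11}+a_{21}e_{12}=e+\Phi(w)w^\ast$, which is literally the paper's element $a=e+\Phi(u)u^\ast$, and your verification that conjugation by $a$ agrees with $\Phi$ on the matrix units and on $S(e\cN e)$ (via continuity and density) matches the paper's corner-by-corner check. The only difference is packaging --- you organize the computation through an abstract $2\times2$ matrix-unit system, whereas the paper works directly with the partial isometry and the four Peirce components.
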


\begin{proof}
Since $\Phi(e)=e,$ it follows that $\Phi(f)=f.$
Let $u\in \cN$ be a partial isometry such that $u^\ast u=e$ and $u u^\ast=f.$
Then $a=e+\Phi(u)u^\ast$ is invertible and $a^{-1}=e+u\Phi(u^\ast ).$ Indeed, taking into account that
$ueu^\ast = f, u^2=0,$ we obtain that
\begin{eqnarray*}
(e+\Phi(u)u^\ast)(e+u \Phi(u^\ast)) & = & e^2+\Phi(u)u^\ast u \Phi(u^\ast) = e^2+\Phi(u) e \Phi(u^\ast)\\
&=& e + \Phi(ueu^\ast) = e + \Phi(f)=\mathbf{1}.
\end{eqnarray*}
Likewise $(e+u \Phi(u^\ast))(e+\Phi(u)u^\ast)=\mathbf{1}.
$

Recall that $e\cN e$ is dense in $S(e\cN e)$ in the measure topology. Since $\Phi$ is continuous in this topology  and acts on $e\cN e$ identically, it follows that $\Phi$ also acts identically on $S(e\cN e).$
Therefore, $\Phi(x)=x=axa^{-1}$ for all $x\in S(e\cN e),$ because $a=e+\Phi(u)u^\ast\in e\cN e+ S(f\cN f).$

Further for any $x\in S(f\cN f)$ we have that
\begin{eqnarray*}
axa^{-1} &=& (e+\Phi(u)u^\ast) x (e+u\Phi(u^\ast)) = (e+\Phi(u)u^\ast) f x f(e+u\Phi(u^\ast))\\
&=&
\Phi(u)u^\ast fxf u\Phi(u^\ast)=\Phi(u)u^\ast x u\Phi(u^\ast)= [u^\ast x u\in S(e\cN e)]\\
& =& \Phi(u)\Phi(u^\ast x u)\Phi(u^\ast)= \Phi(u u^\ast x u u^\ast)=\Phi(fxf)=\Phi(x).
\end{eqnarray*}
Similarly, for $x\in eS(\cN)f$ or $x\in fS(\cN)e$ we also have that
$\Phi(x)=axa^{-1}.$
\end{proof}

\begin{lemma}\label{bbb}
Let $e\in \cN$ be a projection with $e\sim \mathbf{1}-e=f$ and let $\Phi$ be a real automorphism of $S(\cN)$ such that
$l(\Phi(e))=e$ and $\Phi(x)l(\Phi(e))=x$ for all $x\in S(e\cN e).$ Then there is an invertible element $b\in S(\cN)$ such that
$$
\Phi(x)=bxb^{-1}
$$
for all $x\in S(e\cN e).$
\end{lemma}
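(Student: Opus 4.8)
The plan is to exploit that $\Phi(e)$ is an idempotent whose \emph{left} support equals $e$, and to read off from the two hypotheses that on $S(e\cN e)$ the map $\Phi$ differs from the identity only by a strictly ``off-diagonal'' perturbation; multiplicativity then pins this perturbation down as right multiplication by a fixed square-zero element, which is conjugated away by a unipotent element.

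First I would put $u = \Phi(e) - e$. Since $\Phi$ is multiplicative, $\Phi(e)$ is an idempotent, and applying \eqref{rangeofidem} to $\Phi(e)$ with $l(\Phi(e)) = e$ gives $e\Phi(e) = \Phi(e)$ and $\Phi(e)e = e$; hence $eu = u$, $ue = 0$ and $u = uf$ with $f = \mathbf 1 - e$, so $u \in eS(\cN)f$ and $u^2 = 0$, exactly as in the computation preceding Lemma~\ref{limitofrang}. Next fix $x \in S(e\cN e)$. From $ex = x$ we get $\Phi(x) = \Phi(e)\Phi(x)$, and multiplying on the left by $e$ and using $e\Phi(e) = \Phi(e)$ gives $e\Phi(x) = \Phi(x)$; together with the hypothesis $\Phi(x)e = \Phi(x)l(\Phi(e)) = x$ this forces the decomposition $\Phi(x) = x + D(x)$, where $D(x) := e\Phi(x)f = \Phi(x) - x \in eS(\cN)f$.

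The core step is that $D$ is a left $S(e\cN e)$-module homomorphism. Indeed, expanding $\Phi(xy) = \Phi(x)\Phi(y) = (x + D(x))(y + D(y))$ for $x, y \in S(e\cN e)$ and using $fe = 0$ together with $D(x) \in eS(\cN)f$ one sees that $D(x)y = 0$ and $D(x)D(y) = 0$, so $D(xy) = xD(y)$. Specializing $y = e$ yields $D(x) = xD(e) = xu$ for all $x \in S(e\cN e)$, i.e. $\Phi(x) = x + xu = x\Phi(e)$ on $S(e\cN e)$. Finally, since $u^2 = 0$, the element $b := \mathbf 1 - u = \mathbf 1 + e - \Phi(e)$ is invertible with $b^{-1} = \mathbf 1 + u$; using $ux = 0$ for $x \in S(e\cN e)$ (which follows from $ue = 0$ and $x = exe$) a direct computation gives $bxb^{-1} = (\mathbf 1 - u)x(\mathbf 1 + u) = x + xu = \Phi(x)$, proving the lemma.

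All the calculations are short and purely formal. The only things needing care are the bookkeeping with the corners $eS(\cN)e$, $eS(\cN)f$ inside $S(\cN)$ — whose elements need not be bounded — namely the vanishings $ue = 0$, $fu = 0$, $ux = 0$ that make both the module identity and the conjugation collapse, and fixing the correct sign of $u$ in $b$ (it is $b = \mathbf 1 - u$). I also note that, in contrast to Lemma~\ref{aaa}, here $\Phi$ need not act identically on $e\cN e$: the weaker hypothesis $\Phi(x)l(\Phi(e)) = x$ is precisely what produces the nontrivial perturbation $D(x) = xu$, and the assumption $e \sim \mathbf 1 - e$ plays no role in the argument.
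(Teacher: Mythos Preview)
Your proof is correct and follows essentially the same route as the paper: both set $u=\Phi(e)-e\in eS(\cN)f$ with $u^2=0$, take $b=\mathbf{1}-u$ with $b^{-1}=\mathbf{1}+u$, and verify $bxb^{-1}=x+xu=\Phi(x)$ for $x\in S(e\cN e)$. The only difference is cosmetic: you first isolate the perturbation $D(x)=\Phi(x)-x$ and use multiplicativity to identify $D(x)=xu$, whereas the paper performs the equivalent step inline via $\Phi(x)u=\Phi(x)\Phi(e)-\Phi(x)e=\Phi(xe)-x=\Phi(x)-x$; your observation that the hypothesis $e\sim\mathbf{1}-e$ is unused is also accurate.
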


\begin{proof}
Since $\Phi(e)$ is an idempotent, by \eqref{rangeofidem} the element
$e=l(\Phi(e))$ is the range projection of  $\Phi(e),$ that is
$$
e\Phi(e)=\Phi(e),\,\, \Phi(e)e=e.
$$
Then $\Phi(e)=e+w,$ where $w\in eS(\cN)f.$

It is clear that $b=\mathbf{1}-w$ is invertible and $b^{-1}=\mathbf{1}+w,$ because $w^2=0.$

Now for $x\in S(e\cN e)$ we have that
\begin{eqnarray*}
bxb^{-1} &=& (\mathbf{1}-w)x(\mathbf{1}+w) =  x(\mathbf{1}+w) \\
&=&
\Phi(x)e(\mathbf{1}+w)=\Phi(x)e+\Phi(x)w=\Phi(x)e+\Phi(x)(\Phi(e)-e)\\
& =& \Phi(x)e+\Phi(xe)-\Phi(x)e=\Phi(x).
\end{eqnarray*}
\end{proof}

\begin{lemma}\label{eqi}
Let $p\in \cM$ be a projection with $p\sim \mathbf{1}-p,$  let $\Phi$ be a real algebra  isomorphism from $S(\cM)$ onto $S(\cN)$ and let  $e=l\left(\Phi(p)\right)$ be the range projection of $\Phi(p).$ Then $e\sim \mathbf{1}-e.$

\end{lemma}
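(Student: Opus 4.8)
The plan is to show that the range projection $e = l(\Phi(p))$ is equivalent to its complement $\mathbf{1}-e$ by exhibiting $\mathbf{1}-e$ as the range projection of $\Phi(\mathbf{1}-p)$ and then transferring the equivalence $p \sim \mathbf{1}-p$ in $\cM$ across $\Phi$. First I would record the basic facts: since $p$ and $\mathbf{1}-p$ are orthogonal idempotents summing to $\mathbf{1}$, their images $\Phi(p)$ and $\Phi(\mathbf{1}-p)$ are orthogonal idempotents in $S(\cN)$ with $\Phi(p)+\Phi(\mathbf{1}-p)=\mathbf{1}$ (using additivity and multiplicativity of $\Phi$, noting $\Phi(\mathbf{1})=\mathbf{1}$). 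Set $f = l(\Phi(\mathbf{1}-p))$. The orthogonality $\Phi(p)\Phi(\mathbf{1}-p) = 0 = \Phi(\mathbf{1}-p)\Phi(p)$ together with \eqref{rangeofidem} (which gives $l(g)g = g$ and $gl(g)=l(g)$ for an idempotent $g$) should force $e$ and $f$ to be orthogonal; indeed from $\Phi(\mathbf{1}-p)\Phi(p) = 0$ we get $\Phi(\mathbf{1}-p) e = 0$ after multiplying on the right by the partial inverse of $\Phi(p)$, hence $f e = l(\Phi(\mathbf{1}-p))\Phi(\mathbf{1}-p)e$-type manipulation yields $fe=0$, and symmetrically $ef=0$. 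Moreover $e + f = \mathbf{1}$: the idempotent $\Phi(p)+\Phi(\mathbf{1}-p) = \mathbf{1}$ has range projection $\mathbf{1}$, and since $e,f$ are the range projections of the two orthogonal pieces, $e\vee f = \mathbf{1}$, so with orthogonality $e+f=\mathbf{1}$, i.e. $f = \mathbf{1}-e$.

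Next I would transfer the equivalence. Let $u\in\cM$ be a partial isometry with $u^\ast u = p$, $uu^\ast = \mathbf{1}-p$. Then $\Phi(u)$ satisfies $\Phi(u)^\ast\Phi(u)$-type relations only after involution is controlled, which it is \emph{not} for a mere ring isomorphism; so instead I would work with the algebraic data: $u p = u$, $p u^\ast$-free, rather $u^\ast u = p$ gives in particular $u^\ast$ times $u$, but to stay inside ring-theory I would use that $u$ and $u^\ast$ give mutually inverse-like maps between the corners. Concretely, $u \cdot u^\ast = \mathbf{1}-p$ and $u^\ast \cdot u = p$ exhibit $p$ and $\mathbf{1}-p$ as \emph{algebraically equivalent idempotents} (there exist $a=u, b=u^\ast$ in $S(\cM)$ with $ab = \mathbf{1}-p$, $ba = p$). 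Applying $\Phi$: $\Phi(u)\Phi(u^\ast) = \Phi(\mathbf{1}-p)$ and $\Phi(u^\ast)\Phi(u) = \Phi(p)$, so the idempotents $\Phi(p)$ and $\Phi(\mathbf{1}-p)$ are algebraically equivalent in $S(\cN)$. The remaining step is to pass from algebraic equivalence of idempotents to Murray--von Neumann equivalence of their range projections $e$ and $\mathbf{1}-e$: if $g_1 = ab$, $g_2 = ba$ are idempotents, then $l(g_1) \sim l(g_2)$ in $P(\cN)$. I would prove this by replacing $a$ by $a' = l(g_2) a \, l(g_1)$ (and similarly $b$), checking via \eqref{rangeofidem} that $a'b' = l(g_2)$-adjusted still works, then taking the polar decomposition $a' = v|a'|$ in $S(\cN)$; the partial isometry $v$ then satisfies $v^\ast v = r(a') $ and $v v^\ast = l(a')$, and a support computation identifies $r(a')$ with $l(g_1) = e$ and $l(a')$ with $l(g_2) = \mathbf{1}-e$, giving $e \sim \mathbf{1}-e$.

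The main obstacle I anticipate is exactly this last transfer: relating the range projection $l(ab)$ of an idempotent to the Murray--von Neumann equivalence class one expects. One must be careful because $ab$ and $ba$ are \emph{not} self-adjoint, so $l(ab)$ is a genuine range projection (the smallest projection $q$ with $q(ab) = ab$), not a spectral projection, and the naive guess $l(ab) \sim l(ba)$ needs the regularity of $S(\cN)$ (existence of partial inverses and polar decompositions, recorded in the Preliminaries) to be made rigorous. I would handle it by the following clean route: since $\Phi(u)\Phi(u^\ast)=\Phi(\mathbf 1-p)$ is an idempotent with range projection $\mathbf 1-e$, equation \eqref{rangeofidem} gives $(\mathbf 1-e)\Phi(u)\Phi(u^\ast) = \Phi(u)\Phi(u^\ast)$; multiplying on the right by the partial inverse of $\Phi(u^\ast)$ shows $r(\Phi(u)) \le$ something and ultimately that $\Phi(u)$, suitably cut down by $e$ on the right and $\mathbf 1-e$ on the left, is an element of $S(\cN)$ whose left support is $\mathbf 1-e$ and right support is $e$; its partial isometry part in the polar decomposition then realises $e \sim \mathbf 1-e$. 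This is the step that genuinely uses that we are in a finite von Neumann algebra setting (so supports behave well and $S(\cN)$ is a regular $\ast$-algebra) rather than an abstract ring.
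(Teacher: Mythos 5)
The decisive step in your argument is the claim that $f:=l(\Phi(\mathbf{1}-p))$ equals $\mathbf{1}-e$, derived from the assertion that the range projections of the two complementary idempotents $g_1=\Phi(p)$ and $g_2=\Phi(\mathbf{1}-p)$ are mutually orthogonal. This is false, and the manipulation you sketch does not prove it: from $g_2g_1=0$, multiplying on the right by the partial inverse of $g_1$ gives $g_2e=0$, which says $r(g_2)e=0$, i.e. $e\le\mathbf{1}-r(g_2)$; it does \emph{not} give $l(g_2)e=0$. For non-self-adjoint idempotents the left and right supports differ, and the ranges of $g_1$ and $\mathbf{1}-g_1$ are complementary subspaces but in general not orthogonal ones. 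Concretely, in $M_2(\mathbb{C})$ take $g_1=\left(\begin{smallmatrix}1&1\\0&0\end{smallmatrix}\right)$ and $g_2=\mathbf{1}-g_1$; then $l(g_1)=\left(\begin{smallmatrix}1&0\\0&0\end{smallmatrix}\right)$ while $l(g_2)$ is the rank-one projection onto the span of $(-1,1)$, so $l(g_1)l(g_2)\neq0$ and $l(g_1)+l(g_2)\neq\mathbf{1}$. (The same configuration sits inside any type II$_1$ algebra via a $2\times2$ system of matrix units, so this is not an artifact of finite dimensions.)

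What survives of your setup is sound: the transfer $\Phi(u^\ast)\Phi(u)=\Phi(p)$, $\Phi(u)\Phi(u^\ast)=\Phi(\mathbf{1}-p)$, the fact that algebraically equivalent idempotents have Murray--von Neumann equivalent range projections (so $e\sim f$), and the observations $e\vee f=\mathbf{1}$, $e\wedge f=0$. To finish you must replace the false identity $f=\mathbf{1}-e$ by an \emph{equivalence} $f\sim\mathbf{1}-e$, for instance via the parallelogram law $e\vee f-e\sim f-e\wedge f$. The paper's own proof avoids $l(g_2)$ at this point altogether: writing $a=\Phi(u^\ast)$, $b=\Phi(u)$, it deduces $ba(\mathbf{1}-e)=ba$, hence $e\sim l(ba)\sim r(ba)\precsim\mathbf{1}-e$, and separately uses $\mathbf{1}=ab+ba$, hence $\mathbf{1}\le l(ab)\vee l(ba)$, to get $\Delta_\cN(e)\ge\frac{1}{2}\mathbf{1}$ from the dimension function; the two bounds force $\Delta_\cN(e)=\Delta_\cN(\mathbf{1}-e)$ and thus $e\sim\mathbf{1}-e$ by finiteness. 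Either repair works, but as written your proof has a genuine gap at exactly the point you yourself flagged as delicate.
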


\begin{proof}
Since $p\sim \mathbf{1}-p,$ there exists a partial isometry in $\cM$ such that
$u^\ast u=p$ and $\mathbf{1}-p=uu^\ast.$ For convenience denote $a=\Phi(u^\ast)$ and $b=\Phi(u).$
Then
\begin{align*}
ab+ba &= \Phi(u^\ast)\Phi(u)+\Phi(u)\Phi(u^\ast)=\mathbf{1}
\end{align*}
and
\begin{align*}
l(ab) &= l\left(\Phi(u^\ast)\Phi(u)\right)=l\left(\Phi(u^\ast u)\right)=l\left(\Phi(p)\right)=e.
\end{align*}
Note that $eab=ab$ and $abe=e,$ because $e$ is the range projection of the idempotent $\Phi(p).$
From these equalities we obtain that
\begin{align*}
ba(\mathbf{1}-e)& =(\mathbf{1}-ab)(\mathbf{1}-e)=\mathbf{1}-e-ab+abe=\mathbf{1}-ab=ba.
\end{align*}
Thus
\begin{equation}\label{bae}
r(ba)\precsim \mathbf{1}-e.
\end{equation}

Since
$u$ is a partial isometry, it follows that
$u=uu^\ast u$ and $u^\ast =u^\ast u u^\ast.$ Hence
$a=aba$ and $b=bab.$
Using the first equality we have that
\begin{align*}
l(a) & =l(aba)\precsim l(ab) \precsim r(b)\sim l(b).
\end{align*}
Likewise
\begin{align*}
l(b) & \precsim l(ba) \precsim r(a)\sim l(a).
\end{align*}
Since $\cM$ is of type II$_1,$ all projections in the last two relations are equivalent. Thus
\begin{align*}
l(a) & \sim l(ab) \sim l(ba)\sim l(b),
\end{align*}
and hence
\begin{align*}
e=l(ab) & \sim l(ba) \sim r(ba)\stackrel{\eqref{bae}}{\precsim}\mathbf{1}-e.
\end{align*}
Thus from the definition of the dimension function $\Delta_\cN$ on
$\cN$ (see 2.2) it follows that  $\Delta_\cN(e)\le
\Delta_\cN(\mathbf{1}-e)=\mathbf{1}-\Delta_\cN(e),$ and therefore
$\Delta_\cN(e)\le \frac{1}{2}\mathbf{1}.$

    On the other hand, since $\mathbf{1}=ab+ba,$ it follows that
    $$
    \mathbf{1}\le l(ab)\vee l(ba).
    $$
    Further
    \begin{align*}
    \mathbf{1}&=\Delta_\cN(\mathbf{1})\le \Delta_\cN(l(ab))+\Delta_\cN(l(ba))=\Delta_\cN(l(ab))+\Delta_\cN(l(ab))= 2\Delta_\cN(e),
    \end{align*}
    that is,
    $\Delta_\cN(e)\ge \frac{1}{2}\mathbf{1}.$ Hence
    $
    \Delta_\cN(e)=\frac{1}{2}\mathbf{1}=\Delta_\cN(\mathbf{1}-e).
    $
    Thus $e\sim \mathbf{1}-e.$ The proof is complete.
\end{proof}

Let $p\in \cM$ be a projection with $p\sim \mathbf{1}-p=q.$
Let us fix a partial isometry $v\in \cM$ such that
$vv^\ast=p$ and $v^\ast v=q.$ We shall identify the subsets $p\cM
q,$ $q\cM p,$ and $q\cM q$ with the
$p\cM  v,$ $v^\ast \cM p,$ and $v^\ast \cM v,$ respectively.
    Then the decomposition
    $
    x=x_{11}+x_{12}v+v^\ast x_{21}+v^\ast x_{22}v\in\cM,
    $
    where $x_{ij}\in p\cM p,$ $i,j=1,2,$ gives us a  representation of $\cM$ as a matrix algebra $M_2(p\cM p):$
    $$
    x\in \cM \to \left(
    \begin{array}{cc}
    x_{11} & x_{12} \\
    x_{21} & x_{22} \\
    \end{array}
    \right)\in M_2(p\cM p).
    $$

    Suppose that $\Psi$ is a  real $\ast$-isomorphism from $p \cM p$ onto $e \cN e,$
    where  $e\in \cN$ is a projection with $e\sim \mathbf{1}-e.$ Then $\Psi$ can be extended as a real $\ast$-isomorphism $\widetilde{\Psi}$ from $\cM\equiv M_2(p\cM p)$ onto $\cN\equiv M_2(e\cN e)$ as follows
\begin{equation}\label{ext}
    \widetilde{\Psi}\left(
    \begin{array}{cc}
        x_{11} & x_{12} \\
        x_{21} & x_{22} \\
    \end{array}
    \right)=\left(
    \begin{array}{cc}
        \Psi(x_{11}) & \Psi(x_{12}) \\
        \Psi(x_{21}) & \Psi(x_{22}) \\
    \end{array}
    \right).
\end{equation}

 Let $p\in \cM$ be an arbitrary  projection. Below,  for the sake of convenience, we denote by  $\widetilde{p}=l(\Phi(p))$ the range projection of the idempotent  $\Phi(p).$
Then the mapping $\Phi_{p,\widetilde{p}}:p\cM p \to S(\widetilde{p}\cN \widetilde{p})$ defined as
\begin{equation}\label{pe}
    \Phi_{p,\widetilde{p}}(x)=\Phi(x)\widetilde{p},\, x\in p \cM p
\end{equation}
is a real algebra homomorphism from $p\cM p$ into $S(\widetilde{p}\cN \widetilde{p}).$ Indeed,
\begin{eqnarray*}
    \Phi_{p,\widetilde{p}}(x)\Phi_{p,e}(y) &=& \Phi(x)\widetilde{p}\Phi(y)\widetilde{p} =\Phi(x)l(\Phi(p))\Phi(py)\widetilde{p} =
    \Phi(x)l(\Phi(p))\Phi(p)\Phi(y)\widetilde{p} \\
    &=&
    \Phi(x)\Phi(p)\Phi(y)\widetilde{p} =\Phi(xpy)\widetilde{p} =\Phi(xy)\widetilde{p} =\Phi_{p,\widetilde{p}}(xy).
\end{eqnarray*}

\begin{lemma}\label{ccc}
Let $p\in \cM$ be a projection with $p\sim \mathbf{1}-p$ and let $\Phi$ be a real algebra  isomorphism from $S(\cM)$ onto $S(\cN)$ such that
$\Phi_{p,\widetilde{p}}$  maps $p\cM p$ onto  $\widetilde{p}\cN \widetilde{p},$ where $\Phi_{p,\widetilde{p}}$ is defined as \eqref{pe}.
Then there are real $\ast$-isomorphism $\Psi$ from $S(\cM)$ onto $S(\cN)$  and an invertible element $c\in S(\cN)$ such that
$$
\Phi(x)=c\Psi(x)c^{-1}
$$
for all $x\in S(\cM).$
\end{lemma}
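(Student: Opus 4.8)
The plan is to produce the required real $\ast$-isomorphism $\Psi$ out of the corner map $\Phi_{p,\widetilde p}$, to extend it to all of $S(\cM)$ by the $2\times 2$ matrix device \eqref{ext}, and then to absorb the remaining discrepancy between $\Phi$ and this extension into an inner automorphism by appealing to Lemmas~\ref{bbb} and \ref{aaa}.

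Here $\widetilde p=l(\Phi(p))$, and since $p\sim\mathbf 1-p$, Lemma~\ref{eqi} already gives $\widetilde p\sim\mathbf 1-\widetilde p$; in particular $\widetilde p\cN\widetilde p$ is a type II$_1$ von Neumann algebra carrying a projection equivalent to its complement. By hypothesis $\Phi_{p,\widetilde p}\colon p\cM p\to\widetilde p\cN\widetilde p$ is a bijective real algebra homomorphism between von Neumann algebras (injectivity follows from injectivity of $\Phi$ together with \eqref{rangeofidem}), hence automatically norm continuous, being a homomorphism of a $C^\ast$-algebra onto a semisimple Banach algebra. I would then invoke the classical similarity theorem for isomorphisms of von Neumann algebras to write
$$\Phi_{p,\widetilde p}(x)=h\,\Psi_0(x)\,h^{-1},\qquad x\in p\cM p,$$
where $h\in\widetilde p\cN\widetilde p$ is positive and invertible and $\Psi_0\colon p\cM p\to\widetilde p\cN\widetilde p$ is a real $\ast$-isomorphism. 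Obtaining this decomposition — i.e. reducing a bounded real algebra isomorphism of von Neumann algebras to a real $\ast$-isomorphism up to an inner automorphism — is the one genuinely nontrivial ingredient, and the point where the hypothesis that $\Phi_{p,\widetilde p}$ is \emph{onto} $\widetilde p\cN\widetilde p$ is used in an essential way.

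Next I would set $\widehat h=h+(\mathbf 1-\widetilde p)\in\cN$, which is positive, invertible, and commutes with $\widetilde p$, and replace $\Phi$ by $\widehat\Phi$ given by $\widehat\Phi(x)=\widehat h^{-1}\Phi(x)\widehat h$ — still a continuous real algebra isomorphism of $S(\cM)$ onto $S(\cN)$. Using that $\widehat h$ commutes with $\widetilde p$ and that $\widetilde p$ is the range projection of the idempotent $\Phi(p)$ (so $\widetilde p\,\Phi(p)=\Phi(p)$ and $\Phi(p)\,\widetilde p=\widetilde p$ by \eqref{rangeofidem}), a short computation gives $\widehat\Phi(x)\widetilde p=\Psi_0(x)$ for all $x\in p\cM p$ and $l(\widehat\Phi(p))=\widetilde p$. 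Then, via the identifications $\cM\cong M_2(p\cM p)$ and $\cN\cong M_2(\widetilde p\cN\widetilde p)$ (legitimate since $p\sim\mathbf 1-p$ and $\widetilde p\sim\mathbf 1-\widetilde p$), formula \eqref{ext} extends $\Psi_0$ to a real $\ast$-isomorphism $\widetilde\Psi\colon\cM\to\cN$, which in turn extends to a real $\ast$-isomorphism $\Psi\colon S(\cM)\to S(\cN)$ with $\Psi|_{p\cM p}=\Psi_0$ and $\Psi(p)=\widetilde p$.

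Finally, I would consider $\Omega=\widehat\Phi\circ\Psi^{-1}$, a continuous real automorphism of $S(\cN)$. For $y\in\widetilde p\cN\widetilde p$ one has $\Psi^{-1}(y)=\Psi_0^{-1}(y)\in p\cM p$, hence $\Omega(y)\widetilde p=\widehat\Phi(\Psi_0^{-1}(y))\widetilde p=y$; since $\widetilde p\cN\widetilde p$ is dense in $S(\widetilde p\cN\widetilde p)$ in the measure topology and $\Omega$ is continuous, $\Omega(y)\widetilde p=y$ for all $y\in S(\widetilde p\cN\widetilde p)$. Moreover $\Omega(\widetilde p)=\widehat\Phi(p)$, so $l(\Omega(\widetilde p))=\widetilde p$. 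Thus $\Omega$ satisfies the hypotheses of Lemma~\ref{bbb} with $e=\widetilde p$, producing an invertible $b\in S(\cN)$ with $\Omega(y)=byb^{-1}$ for all $y\in S(\widetilde p\cN\widetilde p)$. Then $z\mapsto b^{-1}\Omega(z)b$ is a real automorphism of $S(\cN)$ acting identically on $\widetilde p\cN\widetilde p$, so Lemma~\ref{aaa} yields an invertible $a\in S(\cN)$ with $b^{-1}\Omega(z)b=aza^{-1}$ for all $z\in S(\cN)$. Hence $\Omega(z)=(ba)z(ba)^{-1}$, so $\widehat\Phi(x)=(ba)\Psi(x)(ba)^{-1}$, and therefore
$$\Phi(x)=\widehat h\,\widehat\Phi(x)\,\widehat h^{-1}=(\widehat h\,b\,a)\,\Psi(x)\,(\widehat h\,b\,a)^{-1},\qquad x\in S(\cM),$$
so $c=\widehat h\,b\,a$ works. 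Apart from the similarity reduction of the first step, everything here is routine bookkeeping built on Lemmas~\ref{eqi}, \ref{aaa}, \ref{bbb} and the matrix extension \eqref{ext}; I expect that similarity reduction to be the main obstacle.
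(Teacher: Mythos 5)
Your proposal is correct and follows essentially the same route as the paper: the paper also obtains the similarity decomposition of the corner isomorphism $\Phi_{p,\widetilde p}$ from a known result (it cites \cite[Lemma 2.1 (3)]{MMori2020}), extends the resulting real $\ast$-isomorphism via \eqref{ext} and \cite{SHAC} using Lemma~\ref{eqi}, and then applies Lemma~\ref{bbb} followed by Lemma~\ref{aaa} to the induced real automorphism of $S(\cN)$, assembling $c$ as a product of the three invertibles. The only difference is cosmetic bookkeeping (the paper conjugates inside $\Psi^{-1}$ via $d^{-1}yd$ rather than pre-conjugating $\Phi$ by $\widehat h$).
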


\begin{proof}
Since $\Phi_{p,\widetilde{p}}$
is a real algebra isomorphism from $p\cM p$ onto $\widetilde{p}\cN \widetilde{p},$
by \cite[Lemma 2.1 (3)]{MMori2020}
there are real $\ast$-isomorphism $\Psi$ from $p\cM p$ onto $\widetilde{p}\cN \widetilde{p}$  and an invertible element $d$ in $\widetilde{p}\cN \widetilde{p}$ such that
\begin{equation}\label{ped}
\Phi_{p,\widetilde{p}}(x)=d\Psi(x)d^{-1}
\end{equation}
for all $x\in p \cM p.$
Replacing, if necessary, $d$ with $d+\mathbf{1}-\widetilde{p},$ we may assume that $d$ is an invertible element in $\cN$ with the property \eqref{ped}. Note that by Proposition~\ref{eqi}, $\widetilde{p}\sim \mathbf{1}-\widetilde{p}.$ Thus by \eqref{ext} the   real $\ast$-isomorphism $\Psi$ can be extended from $\cM$ onto $\cN$ which we also denote as $\Psi.$
Further, by \cite[Theorem 1, p. 230]{SHAC}
 real $\ast$-isomorphism $\Psi,$ which is a direct sum of a $\ast$-isomorphism
and a conjugate-linear $\ast$-isomorphism  can be extended from $S(\cM)$ onto $S(\cN)$ which we still denote by $\Psi.$
Set
\begin{equation}\label{psi1}
\Phi_1(y)=\Phi(\Psi^{-1}(d^{-1}yd)),\,\, y\in S(\cN).
\end{equation}
Then $\Phi_1$ is a real  automorphism of $S(\cN)$ such that
$\Phi_1(y)\widetilde{p}=y$ for all $y\in S(\widetilde{p}\cN \widetilde{p}).$
Indeed, for $y\in S(\widetilde{p}\cN \widetilde{p})$ we obtain that
\begin{eqnarray*}
\Phi_1(y)\widetilde{p}  & = & \Phi(\Psi^{-1}(d^{-1}yd))\widetilde{p} \stackrel{\eqref{pe}}= \Phi_{p,\widetilde{p}}(\Psi^{-1}(d^{-1}yd))\\
&\stackrel{\eqref{ped}}= & d\Psi\left(\Psi^{-1}(d^{-1}yd)\right)d^{-1} =
y.
\end{eqnarray*}

By Lemma~\ref{bbb} we can find
an invertible element $h\in S(\cN)$ such that
$$
\Phi_1(y)=hyh^{-1},
$$
or
$$
h^{-1}\Phi_1(y) h=y
$$
for all $y\in e\cN e.$

Finally,
by Lemma~\ref{aaa} there exists an invertible element $a\in S(\cN)$ such that
$$
h^{-1}\Phi_1(y)h=aya^{-1}
$$
or
\begin{equation}\label{ha1}
\Phi_1(y)=(ha)y(ha)^{-1}
\end{equation}
for all $y\in S(\cN).$ For an arbitrary $x\in S(\cM)$ putting $y=d\Psi(x)d^{-1}$ and combining \eqref{psi1}  with \eqref{ha1} we obtain that
$$
\Phi(x)=c\Psi(x)c^{-1},
$$
where $c=had.$
\end{proof}

 We need also the following auxiliary result.

\begin{lemma}\label{boune}  There exists a sequence of projections
    $\left\{p_n\right\}$ in  $\cM$ with \linebreak $\tau_\cM\left(\mathbf{1}-p_n\right)\rightarrow 0$ such
    that $\Phi_{p_n, \widetilde{p_n}}$ maps $p_n\cM p_n$ onto $\widetilde{p_n} \cN \widetilde{p_n}$ for all $n\in \mathbb{N},$ where $\Phi_{p_n,\widetilde{p_n}}$ is defined by \eqref{pe}.
\end{lemma}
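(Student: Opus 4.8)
The plan is to invoke Lemma~\ref{boun} twice: once for $\Phi$, producing projections $q_n\in P(\cM)$ with $\tau_\cM(\mathbf{1}-q_n)\to 0$ and $\Phi(q_n\cM q_n)\subseteq\cN$, and once for $\Phi^{-1}$ (which by Theorems~\ref{ringisomorphism} and~\ref{realisomorphism} is again a continuous real algebra isomorphism between type~II$_1$ Murray--von Neumann algebras), producing projections $g_n\in P(\cN)$ with $\tau_\cN(\mathbf{1}-g_n)\to 0$ and $\Phi^{-1}(g_n\cN g_n)\subseteq\cM$. Since $q_n\to\mathbf{1}$ in the measure topology, continuity of $\Phi$ and Lemma~\ref{limitofrang} (applied to the idempotents $\Phi(q_n)$) give $e_n:=l(\Phi(q_n))\to\mathbf{1}$, hence $\tau_\cN(\mathbf{1}-e_n)\to 0$; replacing $g_n$ by $g_n'=g_n\wedge e_n$ preserves $\tau_\cN(\mathbf{1}-g_n')\to 0$ and $\Phi^{-1}(g_n'\cN g_n')\subseteq\cM$ (a corner of a corner) while additionally giving $g_n'\le e_n$. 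The idea is to build from $g_n'$ a projection $p_n$ whose range-projection image $\widetilde{p_n}=l(\Phi(p_n))$ equals $g_n'$ exactly, so that $\Phi^{-1}$ is bounded precisely on $\widetilde{p_n}\cN\widetilde{p_n}$ and thereby inverts $\Phi_{p_n,\widetilde{p_n}}$ there.

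Concretely I would set $f_n=\Phi^{-1}(g_n')$, an idempotent in $\cM$, and $p_n=l(f_n)$. Using $g_n'\le e_n=l(\Phi(q_n))$ together with the identities \eqref{rangeofidem} for the idempotent $\Phi(q_n)$ one checks that $\Phi(q_n)g_n'=g_n'$, and applying $\Phi^{-1}$ gives $q_nf_n=f_n$; hence $p_n=l(f_n)\le q_n$, so that $\Phi(p_n\cM p_n)\subseteq\Phi(q_n\cM q_n)\subseteq\cN$ and $\Phi_{p_n,\widetilde{p_n}}$ maps $p_n\cM p_n$ into $\widetilde{p_n}\cN\widetilde{p_n}$.

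To see that $\widetilde{p_n}=g_n'$, apply \eqref{rangeofidem} to the idempotent $f_n$ (so $p_nf_n=f_n$ and $f_np_n=p_n$) and then $\Phi$: this yields $\Phi(p_n)g_n'=g_n'$ and $g_n'\Phi(p_n)=\Phi(p_n)$. The first equality together with $l(ab)\le l(a)$ gives $g_n'\le\widetilde{p_n}$; the second, together with the elementary fact that $g_n'x=x$ forces $l(x)\le g_n'$, gives $\widetilde{p_n}\le g_n'$. Hence $\widetilde{p_n}=g_n'$, and in particular $\Phi^{-1}(\widetilde{p_n}\cN\widetilde{p_n})\subseteq\cM$. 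Now for surjectivity: given $y\in\widetilde{p_n}\cN\widetilde{p_n}=g_n'\cN g_n'$, the element $x:=p_n\Phi^{-1}(y)p_n$ lies in $p_n\cM p_n$ (since $\Phi^{-1}(y)\in\cM$), and using $\Phi(p_n)\widetilde{p_n}=\widetilde{p_n}$ (from \eqref{rangeofidem}), $y=\widetilde{p_n}y\widetilde{p_n}$ and $\Phi(p_n)g_n'=g_n'$ one computes $\Phi_{p_n,\widetilde{p_n}}(x)=\Phi(x)\widetilde{p_n}=\Phi(p_n)y=y$. Thus $\Phi_{p_n,\widetilde{p_n}}$ maps $p_n\cM p_n$ onto $\widetilde{p_n}\cN\widetilde{p_n}$.

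Finally, $g_n'\to\mathbf{1}$ in the measure topology of $\cN$ forces, by continuity of $\Phi^{-1}$, $f_n=\Phi^{-1}(g_n')\to\mathbf{1}$ in $\cM$, and Lemma~\ref{limitofrang} applied to the idempotents $f_n$ gives $p_n=l(f_n)\to\mathbf{1}$, i.e.\ $\tau_\cM(\mathbf{1}-p_n)\to 0$, which completes the proof. I expect the only genuine difficulty to be the support bookkeeping forced by the fact that $\Phi$ sends projections to idempotents rather than to projections: the $\cM$-corner $p_n\cM p_n$ must be matched to the $\cN$-corner through the range projection $\widetilde{p_n}=l(\Phi(p_n))$, and the whole argument hinges on the exact coincidence $\widetilde{p_n}=g_n'$, which is precisely what lets $\Phi^{-1}$ return into $\cM$ on $\widetilde{p_n}\cN\widetilde{p_n}$ and thereby supply the surjectivity that Lemma~\ref{boun} alone does not give; everything else is routine manipulation with \eqref{rangeofidem}, elementary support inequalities, continuity, and Lemma~\ref{limitofrang}.
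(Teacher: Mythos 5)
Your proof is correct and follows essentially the same route as the paper's: both apply Lemma~\ref{boun} to $\Phi$ and to $\Phi^{-1}$, build $p_n$ from a meet of $q_n$ with a range projection coming from $g_n$, and obtain surjectivity by compressing $\Phi^{-1}(y)\in\cM$ by $p_n$ and computing with \eqref{rangeofidem}, with $\tau_\cM(\mathbf{1}-p_n)\to 0$ via continuity and Lemma~\ref{limitofrang}. The only (harmless) difference is bookkeeping: the paper takes the meet on the $\cM$ side, setting $p_n=l(\Phi^{-1}(g_n))\wedge q_n$ and concluding only $\widetilde{p_n}\le g_n$, whereas you take it on the $\cN$ side, $g_n'=g_n\wedge l(\Phi(q_n))$, and get the exact identity $\widetilde{p_n}=g_n'$ --- slightly stronger than needed but equally valid.
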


\begin{proof}
By Lemma~\ref{boun} there exist sequences of projections    $\{q_n\}$ and $\{g_n\}$ in $\cM$ and $\cN,$ respectively, such that
\begin{itemize}
\item $\Phi$ maps $q_n \cM q_n$ into $\cN;$
\item $\Phi^{-1}$ maps $g_n \cN g_n$ into $\cM;$
\item $\tau_\cM\left(\mathbf{1}-q_n\right)\rightarrow 0$  and $\tau_\cN\left(\mathbf{1}-g_n\right)\rightarrow 0.$
\end{itemize}
Consider the following projections
$$
p_n = l\left(\Phi^{-1}(g_n)\right)\wedge q_n\in P(\cM)
$$
and
$$
\widetilde{p_n}=l(\Phi(p_n))
$$
for each $n\in \mathbb{N}.$ According to Lemma~\ref{limitofrang}, from  $\tau_\cM\left(\mathbf{1}-q_n\right)\rightarrow 0$  and $\tau_\cN\left(\mathbf{1}-g_n\right)\rightarrow 0,$ we obtain  that $\tau_\cM\left(\mathbf{1}-p_n\right)\rightarrow 0.$
Since $p_n \le  l\left(\Phi^{-1}(g_n)\right)$ and also  $l\left(\Phi^{-1}(g_n)\right)$ is a range projection of $\Phi^{-1}(g_n),$ it follows that
$$
p_n=l(\Phi^{-1}(g_n))p_n=\Phi^{-1}(g_n)l(\Phi^{-1}(g_n))p_n=
\Phi^{-1}(g_n)p_n.
$$
Thus
$$
\Phi(p_n)=g_n\Phi(p_n),
$$
and therefore
$$
\widetilde{p_n}=l(\Phi(p_n))\le g_n.
$$

Consider the element  $y=\widetilde{p_n}y\widetilde{p_n}\in \widetilde{p_n}\cN \widetilde{p_n}\subset g_n \cN g_n.$ By the choice of the projections $g_n,$ there exists an element $x\in \cM$ such that
$\Phi(x)=y.$ Set
$$
x_1=p_nxp_n\in p_n\cM p_n.
$$
Then
\begin{align*}
\Phi_{p_n,\widetilde{p_n}}(x_1) & =\Phi(p_nxp_n)\widetilde{p_n}=\Phi(p_n)\Phi(x)\Phi(p_n)\widetilde{p_n}=\Phi(p_n)\Phi(x)\widetilde{p_n}\\
&
=\Phi(p_n)y\widetilde{p_n}=\Phi(p_n)\widetilde{p_n}y\widetilde{p_n}=\widetilde{p_n}y\widetilde{p_n}=y.
\end{align*}
So, $\Phi_{p_n,\widetilde{p_n}}$ maps $p_n \cM p_n$ onto $\widetilde{p_n} \cN \widetilde{p_n}.$ The proof is complete.
\end{proof}

\begin{proof}[Proof of Theorem \ref{latticeisomorphism}]

\

By Lemma~\ref{boune}  there exists a sequence of projections
$\{p_n\}$ in $\cM$ such that
\begin{itemize}
\item $\tau_\cM(\mathbf{1}-p_n)\rightarrow 0;$
\item $\Phi_{p_n, \widetilde{p_n}}$ maps $p_n \cM p_n$ onto $\widetilde{p_n} \cN \widetilde{p_n}$ for all $n\in \mathbb{N},$
\end{itemize}
where $\Phi_{p_n, \widetilde{p_n}}$ are defined as in \eqref{pe}.

Set
$$
z_1=\sup\left\{z\in P(Z(\cM)): z\Delta(p_1)\ge \frac{1}{2}z\right\}.
$$
Then $\Delta(z_1p_1)\ge \frac{1}{2}z_1.$

Let $n\ge 2.$ Assume that we have constructed mutually orthogonal central projections $z_1, \ldots, z_{n-1}$ in $\cM$ such that
$\Delta(z_kp_k)\ge \frac{1}{2}z_k$ for all $k=1, \ldots, n-1.$
Setting
$$
z_n=\sup\left\{z\in P(Z(\cM)): \  z\Delta(p_n)\ge \frac{1}{2}z, \ zz_k=0, k=1,\ldots, n-1\right\},
$$
we get  $\Delta(z_np_n)\ge \frac{1}{2}z_n.$ So, we have constructed a sequence of mutually orthogonal central projections $z_1, \ldots, z_{n},\ldots$ in $\cM$ such that
$\Delta(z_np_n)\ge \frac{1}{2}z_n$ for all $n=1,2, \ldots.$
Since $\tau_\cM(\mathbf{1}-p_n)\rightarrow 0$ we have that
$\sum\limits_{n=1}^\infty z_n=\mathbf{1}.$

Since $\cM$ is of type II$_1$, for each $n\in \mathbb{N}$ we may
take a projection $f_n\le z_n p_n$ such that
$\Delta(f_n)=\frac{1}{2}z_n.$ Then $f_n\sim z_n-f_n$
in the reduced von Neumann algebra $z_n\cM.$ Let $\Phi_{f_n,
\widetilde{f_n}}$ be the mapping defined as in \eqref{pe}. We have
that $\Phi_{f_n, \widetilde{f_n}}(x)=\Phi_{p_n,
\widetilde{p_n}}(x)\widetilde{f_n}$ for all $x\in f_n \cM f_n,$
and  $\widetilde{f_n}\le \widetilde{p_n}.$ Therefore, since
$\Phi_{p_n, \widetilde{p_n}}$ is bijective, it follows that
$\Phi_{f_n, \widetilde{f_n}}$ maps $f_n \cM f_n$ onto
$\widetilde{f_n} \cN \widetilde{f_n}$ for all $n\in \mathbb{N}.$
By Lemma~\ref{ccc} for each $n\in \mathbb{N}$ there are real
$\ast$-isomorphism $\Psi_n$ from $S(z_n\cM)$ onto
$S(\Phi(z_n)\cN)$  and an invertible element $c_n\in
S(\Phi(z_n)\cN)$ such that
$$
\Phi(x)=c_n\Psi_n(x)c_n^{-1}
$$
for all $x\in S(z_n\cM).$
Setting
$c=\sum\limits_{n=1}^\infty c_n$ and
$$
\Psi(x)=\sum\limits_{n=1}^\infty \Psi_n(z_nx),\,\, x\in S(\cM),
$$
we obtain an invertible element $c\in S(\cN)$ and a
real $\ast$-isomorphism from $S(\cM)$ onto $S(\cN)$ such that
$$
\Phi(x)=c\Psi(x)c^{-1}
$$
for all $x\in S(\cM).$
The proof of Theorem is complete.
\end{proof}

\begin{proof}[Proof of Corollary~\ref{latticering}]
 Let $\cM$ and $\cN$  be von Neumann algebras of type
II$_1.$ Suppose that $\Theta: P(\cM) \to P(\cN)$ is a lattice isomorphism. By \cite[Part II, Theorem 4.2]{Neu60}  there exists  a ring  isomorphism $\Phi$  from $S(\cM)$ onto $S(\cN)$ such that
$\Theta(l(x))=l\left(\Phi(x)\right)$
for all $x\in S(\cM).$
Then by Theorem~\ref{latticeisomorphism} there exists a real
$\ast$-isomorphism $\Psi: \cM \to  \cN.$
The converse assertion is clear.
\end{proof}

\end{document}